\pgfplotsset{compat=1.9}
\newcommand{\horizon}{N}
\newcommand{\NPwa}{m}
\newcommand{\cost}[1]{\tfrac{1}{2}{#1}^\transp H #1 + h^\transp #1}
\newcommand{\activePoly}[1]{\mathcal{I}_{#1}}
\newcommand{\activeSet}[1]{\mathcal{J}_{#1}}
\newcommand{\fixc}[2]{{\fix #1 | #2}}
\newcommand{\asslabel}[2]{#2\def\@currentlabel{#2}\label{#1}}
\renewcommand{\refass}[1]{(\ref{#1})}
\definecolor{cred}{rgb}{1,0,0}
\definecolor{gurobicolor}{rgb}{0.25,0.25,0.75}
\definecolor{mosekcolor}{rgb}{0.4,0.4,0.75}
\definecolor{cplexcolor}{rgb}{0.55,0.55,0.75}
\definecolor{gurobicolorlight}{rgb}{0.602,0.602,0.867}
\definecolor{mosekcolorlight}{rgb}{0.731,0.731,0.887}
\definecolor{cplexcolorlight}{rgb}{0.856,0.856,0.922}
\definecolor{darkgreen}{rgb}{0.2,0.5,0.2}
\definecolor{lightdarkgreen}{rgb}{0.5,0.8,0.5}
\definecolor{verylightdarkgreen}{rgb}{0.8,0.92,0.8}
\definecolor{color1}{rgb}{0.25,0.25,0.75}
\colorlet{color1light}{color1!55}
\colorlet{color1verylight}{color1!30}
\colorlet{color2}{red!90!black}
\colorlet{color2light}{color2!65}
\colorlet{color2verylight}{color2!20}
\colorlet{color3}{orange}
\colorlet{color3dark}{color3!80!black}
\colorlet{color3light}{color3!55}
\colorlet{color3verylight}{color3!30}
\colorlet{LinkColor}{red}	
\numberwithin{theorem}{section}
\newcommand{\TheTitle}{Low-complexity method for hybrid MPC with local guarantees} 
\newcommand{\TheAuthors}{D. Frick, A. Georghiou, J. L. Jerez, A. Domahidi and M. Morari}
\headers{\TheTitle}{\TheAuthors}
\title{{\TheTitle}\thanks{This manuscript is the preprint of a paper submitted to the SIAM Journal on Control and Optimization. If accepted, the copy of record will be available at http://epubs.siam.org/journal/sjcodc}}
\author{
  Damian Frick\thanks{Automatic Control Laboratory, ETH Zurich, Physikstrasse 3, 8092 Z\"urich, Switzerland
    (\email{dafrick@control.ee.ethz.ch}, \email{morari@control.ee.ethz.ch}).}
  \and
  Angelos Georghiou\thanks{Desautels Faculty of Management, McGill University, Montreal, Quebec, Canada (\email{angelos.georghiou@mcgill.ca}).}
  \and
  Juan L. Jerez\thanks{embotech AG, Technoparkstrasse 1, 8005 Z\"urich, Switzerland (\email{jerez@embotech.com}, \email{domahidi@embotech.com}).}
  \and
  Alexander Domahidi\footnotemark[4]
  \and
  Manfred Morari\footnotemark[2]
}
\begin{document}

\maketitle

% REQUIRED
% 244/250 words
\begin{abstract}
  Model predictive control problems for constrained hybrid systems are usually cast as mixed-integer optimization problems (MIP). However, commercial MIP solvers are designed to run on desktop computing platforms and are not suited for embedded applications which are typically restricted by limited computational power and memory. To alleviate these restrictions, we develop a novel low-complexity, iterative method for a class of non-convex, non-smooth optimization problems. This class of problems encompasses hybrid model predictive control problems where the dynamics are piece-wise affine (PWA). We give conditions such that the proposed algorithm has fixed points and show that, under practical assumptions, our method is guaranteed to converge locally to local minima. This is in contrast to other low-complexity methods in the literature, such as the non-convex alternating directions method of multipliers (ADMM), for which no such guarantees are known for this class of problems.
	By interpreting the PWA dynamics as a union of polyhedra we can exploit the problem structure and develop an algorithm based on operator splitting procedures. Our algorithm departs from the traditional MIP formulation, and leads to a simple, embeddable method that only requires matrix-vector multiplications and small-scale projections onto polyhedra.
	We illustrate the efficacy of the method on two numerical examples, achieving good closed-loop performance with computational times several orders of magnitude smaller compared to state-of-the-art MIP solvers. Moreover, it is competitive with ADMM in terms of suboptimality and computation time, but additionally provides local optimality and local convergence guarantees.
\end{abstract}

% REQUIRED
%\begin{keywords}
%  Operator splitting; Hybrid model predictive control; Non-smooth and discontinuous problems; Iterative schemes.
%\end{keywords}

% REQUIRED
%\begin{AMS}
%  93C30, 90C30, 49J52, 47H09
%\end{AMS}
% 93C30 Systems governed by functional relations other than differential equations (such as hybrid and switching systems)
% 90C30 Nonlinear programming
% 49J52 Nonsmooth analysis
% 47H09 Contraction-type mappings, nonexpansive mappings, A-proper mappings, etc

\section{Introduction}

Many practical applications for control fall in the domain of hybrid systems, including applications such as active suspension control or energy management in automotives \cite{dicairano2007, arce2009} and applications in power electronics \cite{geyer2008}. These applications require fast sampling times in the sub-second range, and the control algorithms need to be implemented on industrial, resource-constrained platforms, such as microcontrollers or re-configurable hardware. Hybrid systems are characterized by complex interactions between discrete and continuous behaviors that make them extremely challenging to control. In the last decades, model predictive control (MPC) \cite{morari1999} has received widespread attention both in research and industry. It provides a systematic approach for controlling constrained hybrid systems, promising high control performance with minimal tuning effort.

For hybrid systems, mature modeling tools such as the mixed logical dynamical framework, see \cite{bemporad1999}, are available. It defines rules for posing hybrid MPC as mixed-integer optimization problems (MIP). These MIPs can then be tackled with powerful commercial solvers such as CPLEX~\cite{cplex}.
These solvers have focused on the solution of generic, large-scale, mixed-integer linear/quadratic optimization problems on powerful computing platforms, and thus have several drawbacks for embedded applications:
\begin{enumerate*}[label=(\roman*)]
	\item their code size is in the order of tens of megabytes;
	\item the algorithms require substantial working memory;
	\item they depend on numerical libraries that cannot be ported to most embedded platforms.
\end{enumerate*}
This has restricted the applicability of hybrid MPC to systems that can run on desktop computing platforms, with sampling times in the order of minutes or even hours. 

Recently there have been efforts to enable hybrid MPC applications with faster dynamics to run on platforms with more limited computational power.
Explicit MPC \cite{alessio2009} is the method of choice for very small problems but quickly becomes intractable for increasing problem dimension.
Therefore, methods that further exploit
\begin{enumerate*}[label=(\roman*)]
	\item the sparsity-inducing multistage structure of MPC problems, and
	\item the fact that MPC problems are parametric and solved in a receding horizon fashion,
\end{enumerate*}
have been developed.
Methods based on reformulation and relaxation of MIPs arising from optimal control problems have been proposed in \cite{frick2015, axehill2008, jung2013, jung2013b}, and have led to reduced computation times compared to general-purpose solvers.
However, these methods are not fast enough for many practical applications.

Methods based on operator splitting allow to trade off computational complexity with optimality, i.e., they typically produce ``good'' solutions in a fraction of the time required even by tailored MIP solvers to find global optima.
Furthermore, they allow the problem to be split into several small subproblems, making these methods especially suited for embedded applications and implementations on computing systems with limited resources. The alternating directions method of multipliers (ADMM) \cite{boyd2011} is an example of such operator splitting methods. In \cite{takapoui2017}, ADMM was used on various hybrid MPC examples achieving substantial gains in computational performance compared to general-purpose MIP solvers.
Convergence guarantees are available for some special cases of non-convex ADMM \cite{li2015,magnusson2016}, however, neither optimality or convergence guarantees are available for the class of problems considered in this work.
Other low-complexity methods based on operator splitting suited for non-convex MPC are \cite{hours2016,houska2016}. They are attractive because they offer computational performance similar to ADMM, but unlike ADMM they still retain some guarantees on optimality and convergence. Nonetheless, they do not address the class of problems considered here.

\subsection{Contribution}
In this paper, we develop an algorithm based on operator splitting to address hybrid MPC problems, that provides both the desired theoretical guarantees and computational speed. Our method exploits the multistage structure of the optimization problem, allowing to decompose the complex piece-wise affine (PWA) equality constraints into decoupled non-convex polyhedral constraints. By leveraging their polyhedral nature, we are able to derive guarantees on optimality and convergence.
The main contributions of this paper are the following:
\begin{itemize}[wide,labelindent=0px]
	\item We develop an iterative, numerical method to find local minima of a class of non-convex, non-smooth optimization problems that arise in MPC problems for systems with PWA dynamics. Our method is of low complexity and only requires matrix-vector multiplications and small-scale Euclidean projections onto convex polyhedra.
	\item We provide
	\begin{enumerate*}[label=(\roman*)]
		\item conditions for the existence of fixed points of the proposed algorithm;
		\item show that the method converges locally to such fixed points; and
		\item prove that fixed points correspond to local minima.
	\end{enumerate*}
	To the author's knowledge, for the class of problems considered, this is the first method of such simplicity that allows to give local optimality and convergence guarantees.
	\item The proposed algorithm is implemented using automatic code generation, directly targeting the embedded applications. This code generation tool is made publicly available via github \cite{autolim}. We examine the method's performance on two numerical example and demonstrate computational speed-ups of several orders of magnitude, compared to conventional MIP solvers, with only a minor loss of optimality. We further show, that the method is competitive with other local methods such as ADMM.
\end{itemize}

\subsection{Outline}

In \refsec{sec:ProblemStatement}, we state the problem. In \refsec{sec:KKTOperator}, we construct an operator whose fixed points correspond to local minima. In \refsec{sec:method}, we propose an algorithm based on this operator and discuss local optimality and local convergence of the method. In \refsec{sec:numerical}, we present numerical results. Proofs of auxiliary results are given in the appendix.

\subsection{Notation \& elementary definitions}
For vectors $x,y \in \reals{n}$, $\|x\| := \sqrt{x^\transp x}$ is the 2-norm and $\langle x, y \rangle := x^\transp y$ the scalar product. For a closed set $\set{C}$, $\euclproj{\set{C}}(x) := \argmin_{z \in \set{C}} \|x-z\|$ is the Euclidean projection of $x$ onto $\set{C}$. The closed $\epsilon$-ball around $x$ is $\ball_{\epsilon}(x) := \{z \sep{} \|x-z\| \leq \epsilon\}$, we use the shorthand $\ball_{\epsilon} := \ball_{\epsilon}(0)$. The \emph{relative interior} is $\relint\set{C}$. By $\set{C}+\{x\}$ we denote the Minkowski sum of $\set{C}$ and the singleton $x$. The \emph{characteristic function} of set $\set{C}$ is $\charfunc{\set{C}}(x) := \{ 0 \text{ if } x \in \set{C}\,; +\infty \text{ otherwise}\}$. An \emph{operator} $T : \set{X} \rightrightarrows \set{Y}$ maps every point $x \in \set{X}$ to a set $T(x) \subseteq \set{Y}$. $T$ is called \emph{single-valued} if for all $x \in \set{X}$: $T(x)$ is a singleton. A \emph{zero} of $T$ is a point $x \in \set{X}$ such that $0 \in T(x)$, denoted by $x \in \zero T$, where $\zero T := \{x\sep{}0\in T(x)\}$. A \emph{fixed point} of $T$ is a point $x \in \set{X}$ such that $x \in T(x)$, denoted by $x \in \fix T$, where $\fix T := \{x\sep{}x\in T(x)\}$. 
Finally, when we say that a set has ``measure zero'' we more precisely mean that it has Lebesgue measure zero.

%%%%%%%%%%%%%%%%%     SECTION 2     %%%%%%%%%%%%%%%%%%%%%%%%%%
\section{Problem statement} \label{sec:ProblemStatement}

In this work, we consider optimization problems that are \emph{parametric} in $\theta \in \reals{p}$ and given as follows:
\begin{subequations} \label{prob:PrimalProblem}
	\begin{equation}
		\opt{p}(\theta) := \hspace{-1em}\min_{\substack{z \in \set{E} \cap \set{Z}(\theta)}} \cost{z}\,,
	\end{equation}
with $H \in \reals{n \times n}$ positive definite, and $z := (z_1,\ldots,z_N) \in \reals{n}$, with $z_k \in \reals{n_k}$. The parametric nature implies that only parts of the problem data will change every time the problem needs to be solved. The non-convex set $\set{Z}(\theta) \subseteq \reals{n}$ has the form:
\begin{equation}
	\set{Z}(\theta) := \bigtimes_{k=1}^{\horizon} \Big(\bigcup_{i=1}^{\NPwa_k} \set{Z}_{k,i}(\theta)\Big)\,, \label{eqn:Z}
\end{equation}
where the sets $\set{Z}_{k,i}(\theta) \subseteq \reals{n_k}$ are closed convex polyhedra that may depend linearly on $\theta$, in the following way: $\set{Z}_{k,i}(\theta) := \{ z_k \in \reals{n_k} \sep{} G_{k,i}z_k= g_{k,i}(\theta)\,, F_{k,i}z_k \leq f_{k,i}(\theta) \}$, with matrices and vectors of appropriate dimensions, and $g_{k,i}(\theta)$, $f_{k,i}(\theta)$ affine functions of the parameter $\theta$.
For a given parameter $\theta$, the Euclidean projection onto $\set{Z}(\theta)$ can be evaluated very efficiently by decoupling it into simple convex projections, see \refalg{alg:projection} in \refsec{sec:method}. This important feature of $\set{Z}$ will be a key ingredient of the algorithm presented in \refsec{sec:method}.
For simplicity of notation we will omit the parameter dependence of $\set{Z}(\theta)$ in the remainder of this paper.
Finally, the set $\set{E}$ is an affine equality constrained set
\begin{equation}
	\set{E} := \{ z \in \reals{n} \sep{} Az =b\} = \{ Vv + \bar{v} \sep{} v \in \reals{n-m} \}\,,
\end{equation}
with $A \in \reals{m\times n}$ full row rank, and an appropriate matrix $V \in \reals{n\times n-m}$ and vector $\bar{v} \in \reals{n}$ \cite[p.~430, (15.14)]{numericalOptimization2006}, where $V$ has full column rank by construction. We furthermore assume that $\set{E} \cap \set{Z}$ is non-empty, hence the problem is feasible.
\end{subequations}

\begin{remark}
	The class of problems named \emph{Mathematical Programs with Complementarity Constraints} (MPCCs) can be brought into the form of \refprob{prob:PrimalProblem}, as long as their constraint functions are linear and their objective function is strictly convex quadratic. This can be done by constructing a union of sets that enumerates the different possibilities of the complementarity constraints. However, in general it is not possible to represent an MPCC in the form of \refprob{prob:PrimalProblem} with $N > 1$ and low-dimensional sets $\set{Z}_{k,i}$. This representation is therefore often, and in general, exponential in the number of complementarity constraints. In contrast, with the representation \refprob{prob:PrimalProblem} we explicitly consider this Cartesian structure and in this work develop a method that exploits it.
\end{remark}

\subsection{Consensus form}

\refprob{prob:PrimalProblem} can be re-written in the so-called consensus form by introducing copies $y$ of $z$ as follows
\begin{equation} \label{prob:ConsensusProblem}
	\opt{p} := \min_{z,y\,:\,z=y} \cost{z} + \charfunc{\set{E}}(z) + \charfunc{\set{Z}}(y)\,,
\end{equation}
which is a non-convex, non-smooth optimization problem. The constraint sets $\set{E}$ and $\set{Z}$ have been moved into the cost function using their characteristic functions $\charfunc{\set{E}}(z)$ and $\charfunc{\set{Z}}(y)$. This problem formulation ensures that the two sets $\set{E}$ and $\set{Z}$ can be decoupled. The consensus constraint $z=y$ encodes the coupling between $z$ and $y$.

\subsection{Hybrid Model Predictive Control} \label{sec:mpc}

We are particularly interested in MPC problems where the discrete-time dynamics are given by a PWA function of the states and inputs. Such parametric hybrid MPC problems can be written as:
\begin{subequations} \label{prob:MPCProblem}
	\begin{align}
		\min_{x_k,u_k}& \quad \sum_{k=1}^N\nolimits q_{k+1}(x_{k+1}) + r_{k}(u_k)\\[-0.4em]
		\suchthat &\quad x_{k+1} = A_{k,i} x_k + B_{k,i} u_k + c_{k,i} \quad \text{ if } (x_k,u_k) \in \set{C}_{k,i}\,,\: k=1,\ldots,N \,,\label{eqn:MPCPwa}\\
		&\quad x_1 = \theta\,,
	\end{align}
\end{subequations}
for $i=1,\ldots,m_k$ with $m_k$ representing the number of regions of the PWA dynamics. Vector $x_k \in \reals{n_x}$ denotes the state of the system at time $k$, $u_k \in \reals{n_u}$ denotes the inputs applied to the system between times $k$ and $k+1$, and $\theta \in \reals{n_x}$ is the (parametric) initial state of the system. The objective terms $q_{k+1}$ and $r_{k}$ are strictly convex, quadratic functions. The sets $\set{C}_{k,i}$ are non-empty closed convex polyhedra that define the partition of the PWA dynamics, and include state and input constraints.

Problems of the form \eqref{prob:MPCProblem} can be solved by introducing additional continuous and binary variables, and formulating an MIP using, e.g., the big-M reformulation \cite{bemporad1999} or more advanced formulations \cite{vielma2015}. The resulting problems can then be solved using off-the-shelf commercial MIP solvers. 
Instead of formulating a MIP, in this work we transform \refprob{prob:MPCProblem} into the form~(\ref{prob:PrimalProblem}). In this way only $\horizon n_x$ auxiliary continuous variables and \emph{no binary variables} need to be introduced. For each time instant $k$ we introduce a copy $w_k$ of $x_{k+1}$, as follows: $w_k := A_{k,i} x_k + B_{k,i} u_k + c_{k,i}$, if $(x_k,u_k) \in \set{C}_{k,i}$.
This allows us to define the closed convex polyhedral sets
\begin{align*}
	\set{Z}_{1,\hat{\imath}}(\theta) &:= \big\{ (u_1,w_1) \sep{\big} w_1 = A_{1,\hat{\imath}} \theta + B_{1,\hat{\imath}} u_1 + c_{1,\hat{\imath}} \text{ and }(\theta,u_1) \in \set{C}_{1,\hat{\imath}}\big\}\,,\\
	\set{Z}_{k,i} &:= \big\{ (x_k,u_k,w_k) \sep{\big} w_k = A_{k,i} x_k + B_{k,i} u_k + c_{k,i} \text{ and } (x_k,u_k) \in \set{C}_{k,i}\big\}\,,
\end{align*}
for $\hat{\imath} = 1,\ldots,m_1$ and for $i=1,\ldots,m_k$, $k=2,\ldots,N$.
The non-convex constraint set $\set{Z}$ now has the form of \eqref{eqn:Z} and is \emph{decoupled} in the horizon $N$.
For an equivalent formulation, we impose $x_{k+1}=w_k$ by defining a set of \emph{coupling} equality constraints
\begin{equation*}
	\set{E} := \bigtimes_{k=1}^N \Big( \reals{n_u} \times \big\{(w_k,x_{k+1}) \in \reals{2n_x} \sep{\big} x_{k+1}=w_k\big\} \Big)\,.
\end{equation*}

\begin{remark} \label{rem:MPCConsCost}
To ensure the strict convexity of \refprob{prob:PrimalProblem} equivalent to \refprob{prob:MPCProblem}, we impose the objective $\alpha_k q_{k+1}(x_{k+1}) + (1-\alpha_k)q_{k+1}(w_{k}) + r_k(u_k)$ for each $k$ and any $\alpha_k \in (0,1)$. With the coupling constraint $w_k=x_{k+1}$ this ensures that the cost remains unchanged. Without loss of generality, we have used $\alpha_k = 0.5$ for all $k$.
\end{remark}

\begin{remark}[Properties of the non-convex set $\set{Z}$]
	In this work we deal with the general case of $\set{Z}$ being polyhedral non-convex, and we do not make explicit use of structure of $\set{Z}$ beyond the fact that it can be represented as the Cartesian product over unions of closed convex polyhedra.
	In \refsec{sec:method}, we give local convergence guarantees under assumptions that additionally restrict the set $\set{Z}$. However, even with these restrictions, $\set{Z}$ can still be polyhedral non-convex, which in particular means that it can have holes and gaps.
	Nevertheless, depending on the application, additional structure of $\set{Z}$ may be desirable for the presented method to be useful from a practical standpoint. For example, when the set $\set{Z}$ consists of many disjoint sets, then there exist many local minima, some of which may be almost trivial to find. Indeed, we have found that for MPC problems the method performs well if the piecewise affine dynamics are continuous, which often leads to fewer local minima.
	%In the numerical results we consider an example with discontinuous dynamics, \refexa{exa:bemporad}, as well as an example with continuous dynamics, \refexa{exa:racecars}.
\end{remark}

%%%%%%%%%%%%%%%%%     SECTION 3     %%%%%%%%%%%%%%%%%%%%%%%%%%
\section{KKT conditions for \refprob{prob:ConsensusProblem}} \label{sec:KKTOperator}
Our aim is to develop an algorithm to find local minima of \refprob{prob:ConsensusProblem}.
For convex optimization, Karush-Kuhn-Tucker (KKT) conditions give rise to necessary and sufficient conditions for global optimality, under mild assumptions. In the non-convex, non-smooth case, however, they are in general neither necessary nor sufficient, even for local optimality.
We use a notion of KKT points for instances of \refprob{prob:ConsensusProblem} that provide necessary conditions for local optimality. Computing such KKT points is in general intractable. Therefore, in this section, we propose two restrictions (inner approximations) that are computationally attractive.
Based on these KKT conditions, we construct an operator $K_{\xi}$ whose properties and structure allow us to derive a computationally efficient method and give local optimality and convergence guarantees in \refsec{sec:method}.

\subsection{Generalized, regular \& proximal KKT points}  \label{sec:KKTPoints}
We denote KKT points by~$\kkt{}$, local optima by~$\loc{}$ and global optima by~$\opt{}$.
A point $(\kkt{z},\kkt{y},\kkt{\lambda})$ is called a generalized KKT point \cite[Theorem~3.34, p.~127]{ruszczynski2006} of \refprob{prob:ConsensusProblem} if it satisfies
%\begin{subequations} \label{def:origKKTpoints}
%\begin{align}
%	0 &\in \subd \left(\cost{z} + \charfunc{\set{E}}(z) + \charfunc{\set{Z}}(y)\right) (\kkt{z},\kkt{y})\nonumber\\
%	&\quad- \{\nabla\langle \kkt{\lambda}, z-y\rangle(\kkt{z},\kkt{y})\}\,,\label{eqn:KKTstationarity}\\
%	0 &= \kkt{z}-\kkt{y}\,,\label{eqn:KKTprimalfeasibility}
%\end{align}
%\end{subequations}
%where \eqref{eqn:KKTstationarity} is called the \emph{stationarity} condition and \eqref{eqn:KKTprimalfeasibility} is the \emph{primal feasibility} condition.
%We can simplify \eqref{def:origKKTpoints}, by using $\kkt{z} = \kkt{y}$, and the following two facts about subdifferentials:
%\begin{enumerate*}[label=(\roman*)]
%	\item the subdifferential $\subd f(x)$ of the sum $f:=g+f_0$ of a finite function $g$ and a smooth function $f_0$ is simply $\subd f(x) = \subd g(x) + \{\nabla f_0(x)\}$, see \cite[Exercise~8.8, p.~304]{rockafellar1998};
%	\item the sum of two separable functions $f(x) + g(y)$ has a decoupled subdifferential $\subd f(x) \times \subd f(y)$, see \cite[Proposition~10.5, p.~426]{rockafellar1998}.
%\end{enumerate*}
%This implies that \eqref{def:origKKTpoints} is equivalent to
\begin{subequations} \label{def:KKTpoints}
\begin{align}
	0 &\in \{H \kkt{z} + h\} + \normalcone{\set{E}}(\kkt{z}) - \{\kkt{\lambda}\}\,,\\
	0 &\in \normalcone{\set{Z}}(\kkt{y}) + \{\kkt{\lambda}\}\,,\label{eqn:KKTpoints2}\\
	0 &= \kkt{z}-\kkt{y}\,,
\end{align}
\end{subequations}
where the regular normal cone $\rnormalcone{\set{C}}(z)$ and the (limiting) normal cone $\normalcone{\set{C}}(z)$ of a set~$\set{C}$ are defined as in~\cite[Proposition~6.5, p.~200]{rockafellar1998}:
\begin{equation*}
	\rnormalcone{\set{C}}(z) := \big\{ x \in \reals{n} \sep{\big} \limsup_{u \lattentive{\set{C}} z} \frac{\langle x, u-z \rangle}{\|u-z\|}\big\}\,, \text{ and } \normalcone{\set{C}}(z) := \limsup_{x\lattentive{\set{C}} z} \rnormalcone{\set{C}}(x) = \subd \charfunc{\set{C}}(z)\,.
\end{equation*}
In particular, the KKT conditions \eqref{def:KKTpoints} imply that $\kkt{z}=\kkt{y} \in \set{E} \cap \set{Z}$. Since any KKT point $(\kkt{z},\kkt{y},\kkt{\lambda})$ satisfies primal feasibility, $\kkt{z}=\kkt{y}$, we will use the shorthand notation $(\kkt{z},\kkt{\lambda})$ for KKT points in the remainder of this paper.
The KKT conditions \eqref{def:KKTpoints} give rise to necessary conditions for local optimality as detailed below.
\begin{lemma}[Necessary condition for local optimality] \label{lem:KKTExistence}
	For every local minimum $\loc{z}$ of \refprob{prob:PrimalProblem} there exist Lagrange multipliers $\loc{\lambda} \in \reals{n}$ such that $(\loc{z},\loc{\lambda})$ is a generalized KKT point of \refprob{prob:ConsensusProblem}.
\end{lemma}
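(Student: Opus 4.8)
The plan is to move to the consensus reformulation \refprob{prob:ConsensusProblem}, where $\set{E}$ and $\set{Z}$ are decoupled, apply a first-order necessary condition there, and then read off the multiplier $\loc{\lambda}$ using the standard normal-cone calculus \cite{rockafellar1998}; the only genuinely delicate point will be the constraint qualification that makes that calculus exact, and this is where the polyhedral structure of $\set{Z}$ — together with the result \cite[Theorem~3.34]{ruszczynski2006} that the definition of generalized KKT points rests on — enters.

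First I would record that local minima of \refprob{prob:PrimalProblem} and \refprob{prob:ConsensusProblem} correspond exactly. The feasible set of \refprob{prob:ConsensusProblem} is $\{(z,y)\sep{}z\in\set{E},\ y\in\set{Z},\ z=y\}$, which the map $z\mapsto(z,z)$ identifies with $\set{E}\cap\set{Z}$, and under this identification the consensus objective reduces to $\cost{z}$. Hence $\loc{z}$ is a local minimum of \refprob{prob:PrimalProblem} if and only if $(\loc{z},\loc{z})$ is one of \refprob{prob:ConsensusProblem}; in particular $\loc{z}\in\set{E}\cap\set{Z}$, so the primal-feasibility part of \eqref{def:KKTpoints} is automatic and it only remains to produce $\loc{\lambda}$.

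Next I would apply Fermat's rule at the local minimum $(\loc{z},\loc{z})$ of the proper, lower semicontinuous function $F(z,y):=\cost{z}+\charfunc{\set{E}}(z)+\charfunc{\set{Z}}(y)+\charfunc{\set{D}}(z,y)$, where $\set{D}:=\{(z,y)\sep{}z=y\}$ is the diagonal subspace, which gives $0\in\subd F(\loc{z},\loc{z})$. Since the quadratic term is $C^1$, the exact sum rule for a smooth plus a lower semicontinuous summand yields $0\in(H\loc{z}+h,\,0)+\normalcone{\set{S}}(\loc{z},\loc{z})$ with $\set{S}:=(\set{E}\times\set{Z})\cap\set{D}$. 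Now combining the product rule $\normalcone{\set{E}\times\set{Z}}(\loc{z},\loc{z})=\normalcone{\set{E}}(\loc{z})\times\normalcone{\set{Z}}(\loc{z})$, the identity $\normalcone{\set{D}}(\loc{z},\loc{z})=\{(\mu,-\mu)\sep{}\mu\in\reals{n}\}$, and the intersection rule $\normalcone{\set{S}}(\loc{z},\loc{z})\subseteq\normalcone{\set{E}\times\set{Z}}(\loc{z},\loc{z})+\normalcone{\set{D}}(\loc{z},\loc{z})$ produces some $\mu\in\reals{n}$ with $0\in H\loc{z}+h+\normalcone{\set{E}}(\loc{z})+\{\mu\}$ and $0\in\normalcone{\set{Z}}(\loc{z})-\{\mu\}$, and setting $\loc{\lambda}:=-\mu$ reproduces exactly the three conditions in \eqref{def:KKTpoints}.

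The step I expect to be the main obstacle is the intersection rule invoked above: in general it requires the qualification $\normalcone{\set{E}\times\set{Z}}(\loc{z},\loc{z})\cap(-\normalcone{\set{D}}(\loc{z},\loc{z}))=\{0\}$ — equivalently, since $\normalcone{\set{E}}(\loc{z})$ is a subspace, $\normalcone{\set{E}}(\loc{z})\cap\normalcone{\set{Z}}(\loc{z})=\{0\}$ — a Robinson-type condition that need not hold here. I would handle it by exploiting that $\set{E}$ is affine and that $\set{Z}$ is a product of \emph{finite} unions of \emph{convex polyhedra}: near $\loc{z}$ the set $\set{Z}$ coincides with the union $\bigcup_{I}\set{Z}_{I}$ of its finitely many polyhedral pieces active at $\loc{z}$, so $\set{S}$ coincides locally with $\bigcup_{I}\bigl((\set{E}\times\set{Z}_{I})\cap\set{D}\bigr)$; the limiting normal cone of a finite union of closed sets is contained in the union of the pieces' normal cones, and for each polyhedral piece the formula $\normalcone{P\cap Q}=\normalcone{P}+\normalcone{Q}$ holds with \emph{no} constraint qualification — so the calculus step goes through provided one checks that the per-piece multipliers can be chosen to lie in $\normalcone{\set{Z}}(\loc{z})$ itself, which is precisely the role of the polyhedral generality behind \cite[Theorem~3.34]{ruszczynski2006}. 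Granting this, the plan above yields the lemma.
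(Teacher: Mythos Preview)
Your route differs from the paper's, and the step you flag at the end is a genuine gap rather than a routine check. The per-piece calculus you outline yields
\[
\normalcone{\set{S}}(\loc{z},\loc{z})\ \subseteq\ \bigcup_{I}\bigl(\normalcone{\set{E}}(\loc{z})\times\normalcone{\set{Z}_I}(\loc{z})\bigr)+\normalcone{\set{D}}(\loc{z},\loc{z}),
\]
which produces a multiplier $-\loc{\lambda}\in\normalcone{\set{Z}_I}(\loc{z})$ for \emph{some} active polyhedral piece $I$, not $-\loc{\lambda}\in\normalcone{\set{Z}}(\loc{z})$. For finite unions of polyhedra the inclusion $\normalcone{\set{Z}_I}(\loc{z})\subseteq\normalcone{\set{Z}}(\loc{z})$ fails in general: with $\set{Z}_1=\{(t,0):t\ge 0\}$ and $\set{Z}_2=\{(0,t):t\ge 0\}$ one has $(-1,1)\in\normalcone{\set{Z}_1}(0)$ but $(-1,1)\notin\normalcone{\set{Z}_1\cup\set{Z}_2}(0)$. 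So ``checking that the per-piece multipliers can be chosen to lie in $\normalcone{\set{Z}}(\loc{z})$'' is not a detail --- it is equivalent to the intersection rule $\normalcone{\set{E}\cap\set{Z}}\subseteq\normalcone{\set{E}}+\normalcone{\set{Z}}$, which is exactly the substance of the lemma. The reference \cite[Theorem~3.34]{ruszczynski2006} is used in the paper only to fix the \emph{form} of the generalized KKT system; it does not supply this missing qualification.

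The paper sidesteps the intersection rule altogether. It observes that, since $\set{E}$ is affine and $\set{Z}$ is a finite union of convex polyhedra, the constraint system of \refprob{prob:ConsensusProblem} satisfies a local error bound \cite[Theorem~4.3]{ye2000}; this forces calmness of the problem at every local solution \cite[Proposition~4.2]{ye2000}, and calmness is precisely the qualification under which the multiplier rule \cite[Theorem~3.6]{ye2000} delivers a $\loc{\lambda}$ satisfying \eqref{def:KKTpoints}. This calmness/error-bound route is the standard device for validating normal-cone calculus in the absence of a Robinson-type condition, and it is what your argument would need in place of the final hand-wave.
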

These KKT conditions are difficult to exploit due to the dependence on $\normalcone{\set{Z}}(\cdot)$, which cannot be evaluated easily \cite{henrion2008}. However, we can define two restrictions that admit practical characterizations, resulting in what we call \emph{regular} and \emph{$\xi$-proximal} KKT points. These special cases of KKT points can be evaluated more efficiently and have important regularity properties. Furthermore, in \refsec{sec:locopt}, we will show that they provide sufficient (but no longer necessary) conditions for local optimality.
We call a point ($\kkt{z},\kkt{\lambda}$) a \emph{regular} KKT point of \refprob{prob:ConsensusProblem}, if it satisfies
\begin{subequations} \label{def:RegularKKTPoints}
\begin{align}
	0 &\in \{H \kkt{z} + h\} + \rnormalcone{\set{E}}(\kkt{z}) - \{\kkt{\lambda}\}\,,\text{ and}\\
	0 &\in \rnormalcone{\set{Z}}(\kkt{z}) + \{\kkt{\lambda}\}\,.
\end{align}
\end{subequations}
Note that we have replaced the normal cone $\normalcone{\set{Z}}(\kkt{z})$ from \eqref{eqn:KKTpoints2} with the regular normal cone $\rnormalcone{\set{Z}}(\kkt{z})$. Since $\rnormalcone{\set{Z}}(\kkt{z}) \subseteq \normalcone{\set{Z}}(\kkt{z})$, see \cite[Proposition~6.5, p.~200]{rockafellar1998}, the set of regular KKT points is an inner approximation, a restriction, of the set of generalized KKT points.
Given $\xi>0$, we call ($\kkt{z},\kkt{\lambda}$) a $\xi$-\emph{proximal} KKT point of \refprob{prob:ConsensusProblem}, if it satisfies
\begin{subequations} \label{def:ProximalKKTPoints}
	\begin{align}
		0 &\in \{H \kkt{z} + h\} + \rnormalcone{\set{E}}(\kkt{z}) - \{\kkt{\lambda}\}\,, \text{ and}\label{eqn:ProximalKKTPoint1}\\
		\kkt{z} &\in \euclproj{\set{Z}}(\kkt{z}-\tfrac{1}{\xi}\kkt{\lambda})\,.\label{eqn:ProximalKKTPoint2}
	\end{align}
\end{subequations}
	Compared to the definition of regular KKT points we have replaced the regular normal cone $\normalcone{\set{Z}}(\kkt{z})$ with a condition on the projection, and we have introduced a positive scaling $\xi$ using $\kkt{z} \in \euclproj{\set{Z}}(\kkt{z}-\tfrac{1}{\xi}\kkt{\lambda}) \Rightarrow -\tfrac{1}{\xi}\kkt{\lambda} \in \rnormalcone{\set{Z}}(\kkt{z})$, from \cite[Example~6.16, p.~212]{rockafellar1998}. This directly implies that every $\xi$-proximal KKT point $(\kkt{z},\kkt{\lambda})$ is also a regular KKT point and therefore \eqref{def:ProximalKKTPoints} is a restriction of \eqref{def:RegularKKTPoints}.
\begin{figure}
	\centering
	\begin{tikzpicture}[scale=0.6,cap=round]
	%\draw[style=help lines,step=0.5em] (-1.4,-1.4) grid (1.4,1.4);
	\fill[fill=cplexcolorlight] (0,0) ellipse (6 and 2.5);
	\draw[color2,dotted,thick,postaction={decorate,decoration={raise=-1.8ex,text along path,text align={center,left indent=5.5cm},text={|\tiny\color{color2}|local minima}}}] (0.85,0) ellipse (3.84 and 1.6);
	\draw[cplexcolor,dashed,thick,postaction={decorate,decoration={raise=-2.2ex,text along path,reverse path,text align={center,left indent=7cm},text={|\small\color{cplexcolor}|generalized KKT}}}] (0,0) ellipse (6 and 2.5);
	\filldraw[mosekcolor,fill=mosekcolorlight,dashed,thick,postaction={decorate,decoration={raise=-2ex,text along path,reverse path,text align={center,left indent=4cm},text={|\small\color{mosekcolor}|regular}}}] (1,0) ellipse (3.6 and 1.5);
	\filldraw[gurobicolor,fill=gurobicolorlight,dashed,thick] (2.2,0) ellipse (2.24 and 0.85) node {\small{proximal}};
	\draw[thick,->,gurobicolor] ($(2.2,0)-(2.24,0)$) -- ($(2.2,0)-(2.24,0)-(1.2,0)$) node[midway,below] {$\xi$};
	\end{tikzpicture}
	\caption{The sets illustrate the relationship between different KKT points and represent points $z$ for which there exist multipliers $\lambda$ such that $(z,\lambda)$ is a generalized, regular or proximal KKT point ({\color{blue}dashed}), as well as the set of local minima ({\color{color2}dotted}).} \label{fig:KKTrelation}
\end{figure}

Evaluating whether a pair $(z,\lambda)$ is a regular KKT point has an exponential worst-case complexity, because representing $\rnormalcone{\set{Z}}(z)$ may be combinatorial and can quickly become intractable. In contrast, evaluating whether a pair $(z,\lambda)$ is a $\xi$-proximal KKT point amounts to a projection that can be performed in polynomial-time. Furthermore, for the primal variables $z$, this inner approximation \eqref{def:ProximalKKTPoints} of \eqref{def:RegularKKTPoints} can be made tight, as shown in the following proposition. \reffig{fig:KKTrelation} illustrates the conceptual relationship between the different KKT points.
\begin{proposition} \label{prop:proxNormalsExistence}
	Consider \refprob{prob:ConsensusProblem}.
	\begin{enumerate}[label=\roman*),ref=\ref{prop:proxNormalsExistence}(\roman*)]
		\item\label{prop:proxNormalsExistencei} For any regular KKT point $(\kkt{z},\kkt{\lambda})$, there exists a $\bar{\xi}>0$ such that for all $\xi \geq \bar{\xi}$, $(\kkt{z},\kkt{\lambda})$ is a $\xi$-proximal KKT point.
		\item\label{prop:proxNormalsExistenceii} Given any $\xi>0$ and $\xi$-proximal KKT point $(\kkt{z},\kkt{\lambda})$, then $(\kkt{z},\kkt{\lambda})$ is also a regular KKT point.
	\end{enumerate}
\end{proposition}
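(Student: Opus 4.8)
\emph{Proof strategy.} I would prove the two parts separately; part~(ii) is essentially the implication already announced right after the definition of $\xi$-proximal KKT points, whereas part~(i) needs a quantitative perturbation argument that exploits the fact that $\set{Z}$ is a \emph{finite} union of closed convex polyhedra.

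For part~(ii), I would start from the projection inclusion $\kkt{z}\in\euclproj{\set{Z}}(\kkt{z}-\tfrac{1}{\xi}\kkt{\lambda})$. By \cite[Example~6.16, p.~212]{rockafellar1998} this yields $(\kkt{z}-\tfrac{1}{\xi}\kkt{\lambda})-\kkt{z}=-\tfrac{1}{\xi}\kkt{\lambda}\in\rnormalcone{\set{Z}}(\kkt{z})$, and since $\rnormalcone{\set{Z}}(\kkt{z})$ is a cone and $\xi>0$, also $-\kkt{\lambda}\in\rnormalcone{\set{Z}}(\kkt{z})$, i.e. $0\in\rnormalcone{\set{Z}}(\kkt{z})+\{\kkt{\lambda}\}$. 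Since \eqref{eqn:ProximalKKTPoint1} coincides verbatim with the first condition of \eqref{def:RegularKKTPoints}, the pair $(\kkt{z},\kkt{\lambda})$ is a regular KKT point, which is part~(ii).

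For part~(i), the first observation is that, being a Cartesian product of finite unions of closed convex polyhedra, $\set{Z}$ is itself a finite union $\set{Z}=\set{Z}_1\cup\cdots\cup\set{Z}_L$ of closed convex polyhedra, and that a regular KKT point satisfies $\kkt{z}\in\set{Z}$ (otherwise $\rnormalcone{\set{Z}}(\kkt{z})$ is empty) and $-\kkt{\lambda}\in\rnormalcone{\set{Z}}(\kkt{z})$. For every index $j$ with $\kkt{z}\in\set{Z}_j$, the limsup defining $\rnormalcone{\cdot}(\kkt{z})$ runs over a larger approach set for $\set{Z}\supseteq\set{Z}_j$, hence $\rnormalcone{\set{Z}}(\kkt{z})\subseteq\rnormalcone{\set{Z}_j}(\kkt{z})$, and convexity of $\set{Z}_j$ gives $\rnormalcone{\set{Z}_j}(\kkt{z})=\normalcone{\set{Z}_j}(\kkt{z})$. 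Thus $-\tfrac{1}{\xi}\kkt{\lambda}\in\normalcone{\set{Z}_j}(\kkt{z})$, and the characterization of Euclidean projections onto convex sets yields $\kkt{z}=\euclproj{\set{Z}_j}(\kkt{z}-\tfrac{1}{\xi}\kkt{\lambda})$, so that the distance from $\kkt{z}-\tfrac{1}{\xi}\kkt{\lambda}$ to $\set{Z}_j$ equals $\tfrac{1}{\xi}\|\kkt{\lambda}\|$ for every such ``active'' piece.

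It then remains to push the ``inactive'' pieces away. For $j$ with $\kkt{z}\notin\set{Z}_j$ the distance $\delta_j$ from $\kkt{z}$ to $\set{Z}_j$ is strictly positive, whence by the triangle inequality the distance from $\kkt{z}-\tfrac{1}{\xi}\kkt{\lambda}$ to $\set{Z}_j$ is at least $\delta_j-\tfrac{1}{\xi}\|\kkt{\lambda}\|$, which is $\geq\tfrac{1}{\xi}\|\kkt{\lambda}\|$ whenever $\xi\geq 2\|\kkt{\lambda}\|/\delta_j$. Taking $\bar{\xi}$ to be the largest of these finitely many thresholds (and, say, $\bar{\xi}:=1$ if $\kkt{\lambda}=0$ or if $\kkt{z}$ belongs to every piece), for all $\xi\geq\bar{\xi}$ the minimum over $\set{Z}=\set{Z}_1\cup\cdots\cup\set{Z}_L$ of the distance to $\kkt{z}-\tfrac{1}{\xi}\kkt{\lambda}$ equals $\tfrac{1}{\xi}\|\kkt{\lambda}\|$ and is attained at $\kkt{z}$, i.e. $\kkt{z}\in\euclproj{\set{Z}}(\kkt{z}-\tfrac{1}{\xi}\kkt{\lambda})$; together with the unchanged condition \eqref{eqn:ProximalKKTPoint1} this is exactly \eqref{def:ProximalKKTPoints}, proving part~(i). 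The main obstacle is this last step — separating the active from the inactive polyhedral pieces by a margin that forces $\xi$ to be taken large — and it is precisely here that finiteness of the union $\set{Z}_1\cup\cdots\cup\set{Z}_L$ is essential; everything else reduces to standard normal-cone calculus and the convex projection theorem.
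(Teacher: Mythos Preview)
Your proof is correct and follows essentially the same approach as the paper: part~(ii) is identical, and for part~(i) both you and the paper split $\set{Z}$ into its finitely many convex pieces, use the normal-cone/projection characterization on the active pieces, and then a triangle-inequality margin argument to exclude the inactive ones. The only cosmetic difference is that the paper packages the margin via a single $\epsilon$-ball around $\kkt{z}$ (taking $\bar\xi=2\|\kkt\lambda\|/\epsilon$) rather than your per-piece thresholds $2\|\kkt\lambda\|/\delta_j$, which amounts to choosing $\epsilon=\min_j\delta_j$.
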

\refprob{prob:ConsensusProblem} has only finitely many local minima due to positive definiteness of $H$ and the polyhedrality of $\set{E} \cap \set{Z}$. Therefore, given a $\xi>0$ large enough, for every regular KKT point $(\kkt{z},\kkt{\lambda})$ there exists a $\kkt{\mu}$ such that $(\kkt{z},\kkt{\mu})$ is a $\xi$-proximal KKT point.

\subsection{Operator for proximal KKT points}
\begin{subequations}
The fact that set-membership evaluation of \eqref{def:ProximalKKTPoints} is efficient, motivates the construction of an operator $K_{\xi}$ that has the property that
\begin{enumerate*}[label=(\roman*)]
	\item it is cheap to evaluate and 
	\item has zeros corresponding to $\xi$-proximal KKT points of \refprob{prob:ConsensusProblem}.
\end{enumerate*}
We use these two properties in \refsec{sec:method} to construct an algorithm based on this operator for finding local minima of \refprob{prob:PrimalProblem}.
We define a set-valued operator $K_{\xi} : \reals{n} \rightrightarrows \reals{n}$:
\begin{equation} \label{def:K}
	K_{\xi}(s) := \{M_{\xi}s + c_{\xi}\} - \euclproj{\set{Z}}(s)\,,
\end{equation}
with $\xi>0$ the \emph{proximal scaling}, and
\begin{align}\label{eqn:Mrhoxi}
	M_{\xi} &:= \xi\left( \xi R - I\right)^{-1}R\,,\quad c_{\xi} := \left(\xi R - I\right)^{-1}\left(R (h+H\bar{v}) -\bar{v}\right)\,, \\
	R &:= V(V^\transp H V)^{-1}V^\transp\,,
\end{align}
\end{subequations}
where $V$ and $\bar{v}$ as in~\refsec{sec:ProblemStatement}. We will show that finding an $s$ such that $0 \in K_\xi(s)$ is equivalent to finding a $\xi$-proximal KKT point $(\kkt{z},\kkt{\lambda})$.
For $K_\xi$ to be defined properly, $\xi R-I$ needs to be invertible.
Since $R \succeq 0$, this holds for all
%\begin{subequations}
%\begin{equation} \label{eqn:xiCond}\
%	\xi > \frac{1}{\lambda^+_{\rm min}(R)}\,,
%\end{equation}
%where $\lambda^+_{\rm min}(R)$ is the smallest non-zero eigenvalue of $R$. Finally, we note that~\eqref{eqn:xiCond} implies $M_{\xi} \succeq 0$ and
%\begin{equation} \label{eqn:Meig}
%	\lambda_{\rm min}^+(M_\xi) > 1\,.
%\end{equation}
%\end{subequations}
\begin{subequations}
\begin{align}
	&\xi > \left(\lambda^+_{\rm min}(R)\right)^{-1}\,.\label{eqn:xiCond}\\
	\text{This implies} &\: M_{\xi} \succeq 0 \text{ and } \lambda_{\rm min}^+(M_\xi) > 1\,,\label{eqn:Meig}
\end{align}
\end{subequations}
where $\lambda^+_{\rm min}(R)$ is the smallest non-zero eigenvalue of $R$.
For \refprob{prob:MPCProblem}, it can be shown that~$\xi > \lambda_{\rm max}(H)$ is necessary and sufficient for satisfying both~\eqref{eqn:xiCond} and~\eqref{eqn:Meig}.
%To achieve the theoretical guarantees in \refsec{sec:method}, we will need to impose further lower bounds on $\xi$.
Using \eqref{def:K} we can show that a point $\kkt{s}\in\zero K_{\xi}$, called a zero of $K_\xi$, corresponds to a $\xi$-proximal KKT point $(\kkt{z},\kkt{\lambda})$ with $\kkt{z}$, $\kkt{\lambda}$ as follows:
\begin{equation}\label{eqn:zerosK}
	\kkt{z} := M_{\xi}\kkt{s} + c_{\xi} \text{ and } \kkt{\lambda} := \xi(\kkt{z}-\kkt{s})\,,
\end{equation}
where the precise relationship is given in \reflem{lem:opKKT}.
\begin{lemma}[Zeros of operator $K_{\xi}$] \label{lem:opKKT}
	Given any $\xi>0$. A point $\kkt{s} \in \reals{n}$ satisfies $\kkt{s} \in \zero K_{\xi}$ if and only if the pair $(\kkt{z},\kkt{\lambda})$ constructed through \eqref{eqn:zerosK} is a $\xi$-proximal KKT point of \refprob{prob:ConsensusProblem}.
\end{lemma}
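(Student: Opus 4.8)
The plan is to prove the biconditional by reducing $\kkt{s}\in\zero K_{\xi}$ to the two defining conditions \eqref{eqn:ProximalKKTPoint1}--\eqref{eqn:ProximalKKTPoint2} of a $\xi$-proximal KKT point, applied to the pair $(\kkt{z},\kkt{\lambda})$ built from $\kkt{s}$ through \eqref{eqn:zerosK}. The observation that drives everything is the identity $\kkt{z}-\tfrac{1}{\xi}\kkt{\lambda}=\kkt{z}-\tfrac{1}{\xi}\bigl(\xi(\kkt{z}-\kkt{s})\bigr)=\kkt{s}$. Combined with the definition \eqref{def:K} of $K_{\xi}$ it yields the chain $\kkt{s}\in\zero K_{\xi}\iff M_{\xi}\kkt{s}+c_{\xi}\in\euclproj{\set{Z}}(\kkt{s})\iff\kkt{z}\in\euclproj{\set{Z}}\bigl(\kkt{z}-\tfrac{1}{\xi}\kkt{\lambda}\bigr)$, that is, $\kkt{s}\in\zero K_{\xi}$ is equivalent to condition \eqref{eqn:ProximalKKTPoint2} \emph{alone}. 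Hence the lemma will follow in both directions at once, provided I can show that the remaining condition \eqref{eqn:ProximalKKTPoint1} holds automatically for the pair $(\kkt{z},\kkt{\lambda})$ in \eqref{eqn:zerosK}, for every $\kkt{s}\in\reals{n}$.

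To verify \eqref{eqn:ProximalKKTPoint1}, I would first establish $\kkt{z}\in\set{E}$. Since $\set{E}=\{Vv+\bar{v}\}$ with the columns of $V$ forming a basis of $\ker A$, we have $AV=0$, hence $AR=0$ for $R$ as in \eqref{eqn:Mrhoxi}; invertibility of $\xi R-I$ (granted by \eqref{eqn:xiCond}) then gives $A(\xi R-I)^{-1}=-A$, and therefore $AM_{\xi}=0$ and $Ac_{\xi}=b$ (using $A\bar{v}=b$), so $A\kkt{z}=AM_{\xi}\kkt{s}+Ac_{\xi}=b$. Because $\set{E}$ is affine with direction $\ker A$, its regular normal cone at the point $\kkt{z}\in\set{E}$ is $\rnormalcone{\set{E}}(\kkt{z})=(\ker A)^{\perp}=\ker V^{\transp}$, so condition \eqref{eqn:ProximalKKTPoint1} amounts to $V^{\transp}\bigl(\kkt{\lambda}-H\kkt{z}-h\bigr)=0$; and since $\ker R=\ker V^{\transp}$ this is in turn equivalent to $R\bigl(\kkt{\lambda}-H\kkt{z}-h\bigr)=0$.

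Establishing this last identity is the computational core. Here I would use the two elementary consequences of $R=V(V^{\transp}HV)^{-1}V^{\transp}$, namely the ``reflexive'' identity $RHR=R$ and hence $R(\xi I-H)=\xi R-RH=RH(\xi R-I)$. Substituting $\kkt{z}=M_{\xi}\kkt{s}+c_{\xi}$ and $\kkt{\lambda}=\xi(\kkt{z}-\kkt{s})$ gives $R(\kkt{\lambda}-H\kkt{z}-h)=R(\xi I-H)\kkt{z}-\xi R\kkt{s}-Rh$, so it suffices to check $R(\xi I-H)M_{\xi}=\xi R$ and $R(\xi I-H)c_{\xi}=Rh$. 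Both fall out by pushing the factor $RH(\xi R-I)$ past the $(\xi R-I)^{-1}$ buried inside $M_{\xi}$ and $c_{\xi}$ and then applying $RHR=R$: for instance $R(\xi I-H)M_{\xi}=RH(\xi R-I)\,\xi(\xi R-I)^{-1}R=\xi RHR=\xi R$, and $R(\xi I-H)c_{\xi}=RH\bigl(R(h+H\bar{v})-\bar{v}\bigr)=RHR(h+H\bar{v})-RH\bar{v}=Rh$.

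I expect the only genuine difficulty to be this last step: because $R$ is positive \emph{semi}definite it cannot be cancelled or inverted, so the whole manipulation must be routed through the identities $AR=0$, $A(\xi R-I)^{-1}=-A$, $RHR=R$ and $R(\xi I-H)=RH(\xi R-I)$ — the last two being exactly where the particular form of $R$, as opposed to that of an arbitrary projector, enters. Everything else is routine bookkeeping. As a by-product the same computation shows that \eqref{eqn:zerosK} is precisely the inverse of the assignment $\kkt{s}:=\kkt{z}-\tfrac{1}{\xi}\kkt{\lambda}$ taken from a $\xi$-proximal KKT point, which is why $K_{\xi}$ is the natural operator to construct.
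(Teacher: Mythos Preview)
Your proposal is correct and follows the same overall structure as the paper: both reduce the biconditional to the observation that $\kkt{s}\in\zero K_\xi$ is equivalent to condition \eqref{eqn:ProximalKKTPoint2} alone, with condition \eqref{eqn:ProximalKKTPoint1} holding automatically for any $\kkt{s}$ once $(\kkt{z},\kkt{\lambda})$ are defined via \eqref{eqn:zerosK}.

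The only difference is in how \eqref{eqn:ProximalKKTPoint1} is verified. The paper simply multiplies $\kkt{z}=M_\xi\kkt{s}+c_\xi$ (with $\kkt{s}=\kkt{z}-\tfrac{1}{\xi}\kkt{\lambda}$) through by $(\xi R-I)$ to obtain in one line $\kkt{z}=R(\kkt{\lambda}-h-H\bar{v})+\bar{v}$, which it then recognizes as the unique minimizer of $\tfrac{1}{2}z^\transp Hz+(h-\kkt{\lambda})^\transp z$ over $\set{E}$; \eqref{eqn:ProximalKKTPoint1} is precisely the first-order condition of that convex problem. You instead verify the two ingredients of \eqref{eqn:ProximalKKTPoint1} separately, showing $A\kkt{z}=b$ via $AR=0$ and $A(\xi R-I)^{-1}=-A$, and then $R(\kkt{\lambda}-H\kkt{z}-h)=0$ via the identities $RHR=R$ and $R(\xi I-H)=RH(\xi R-I)$. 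Both routes are valid; the paper's variational shortcut is shorter, while yours makes the algebraic role of $R$ fully explicit and avoids appealing to an auxiliary optimization problem.
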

Multiple points $\kkt{s} \in \zero K_{\xi}$ can result in the same $\kkt{z}$ (but different $\kkt{\lambda}$), since $M_{\xi}$ is only invertible when $\set{E}=\reals{n}$. This means in particular that there can be multiple zeros of $K_\xi$ corresponding to the same local optimum.

%%%%%%%%%%%%%%%%%     SECTION 4     %%%%%%%%%%%%%%%%%%%%%%%%%%
\section{Solution method and theoretical guarantees} \label{sec:method}
In this section we present an algorithm constructed to find $\xi$-proximal KKT points of \refprob{prob:ConsensusProblem} by finding zeros of the operator $K_{\xi}$. The basis of this algorithm is a fixed-point iteration, where every iteration is computationally inexpensive. In Theorem~4.2, we show that if the proposed algorithm converges, then it converges to a local minimum of \refprob{prob:PrimalProblem}. Furthermore, in Theorem~4.6 we show that, under practical assumptions, the presented algorithm converges locally to such minima, almost always. By `almost always' we mean that, for any local minimum that has corresponding fixed points, there exist at most a measure zero subset of fixed points from which the method may not converge.

\subsection{Fixed-point algorithm}

\begin{subequations} \label{def:fixedpointOp}
We define an operator $T_{\xi} : \reals{n} \rightrightarrows \reals{n}$, that has the zeros of $K_{\xi}$ as its fixed points:
\begin{equation}
	T_{\xi}(s) := s - WK_{\xi}(s) = s-W\big(M_{\xi}s+c_{\xi}-\euclproj{\set{Z}}(s)\big)\,,\label{eqn:T2}
\end{equation}
with $W \in \reals{n \times n}$ invertible to ensure $\fix T_{\xi} = \zero K_{\xi}$. $W$ is defined as
\begin{equation} \label{eqn:W}
	W := Q\begin{bsmallmatrix}\frac{1}{2}\Lambda^{-1} & 0 \\ 0 & -I\end{bsmallmatrix}Q^\transp\,,
\end{equation}
where $Q \in \reals{n\times n}$ is orthonormal and $\Lambda \in \reals{n-m\times n-m}$ diagonal, positive definite, such that $M_{\xi} = Q \diag(\Lambda,0)Q^\transp$ is the eigendecomposition of $M_{\xi}$. In addition, the structure of $W$ will be used to prove local nonexpansiveness of the operator $T_\xi$, see \reflem{lem:niceop} in \refsec{sec:convergence}.
The local nonexpansiveness of $T_\xi$ allows us to use the \emph{Krasnoselskij iteration} \cite[Theorem~3.2, p.~65]{berinde2007} to find fixed points of $T_\xi$. Given an initial iterate $s_0 \in \reals{n}$, the Krasnoselskij iteration is defined as follows:
\begin{equation} \label{eqn:krasonselskij}
	s_{j+1} = (1-\gamma)s_j + \gamma T_{\xi}(s_j)\,,
\end{equation}
\end{subequations}
with step size $\gamma \in (0,1)$. From \eqref{eqn:krasonselskij} we obtain \refalg{alg:krasnoselskij}.

\begin{algorithm}[H]
	\caption{Low-complexity method for \refprob{prob:ConsensusProblem}} \label{alg:krasnoselskij}
  	\begin{algorithmic}[1]
   		\Require{$s_0 \in \reals{n}$, $\gamma \in (0,1)$, $\epsilon_{\rm tol} > 0$}
   		%\If{$\opt{z} := \bar{v}-R(h+H\bar{v}) \in \set{Z}$} \Return $\opt{z}$ \Comment{Check trivial solution} \EndIf \label{alg:stepGlobal}
   		\State \algorithmicif\ $\opt{z} := \bar{v}-R(h+H\bar{v}) \in \set{Z}$ \algorithmicthen\ \Return $\opt{z}$ \Comment{check trivial solution}\label{alg:stepGlobal}
   		\Repeat{ \textbf{for} $j=0,1,\ldots$}
    			\State $z_{j+1} = M_{\xi}s_j + c_{\xi}$\label{alg:stepAff}
    			\State $y_{j+1} \in \euclproj{\set{Z}}(s_j)$ \Comment{projection, see Alg.~\ref{alg:projection}}\label{alg:stepProjection}
    			\State $s_{j+1} = s_j - \gamma W(z_{j+1}-y_{j+1})$ \Comment{Krasnoselskij iteration}\label{alg:stepKrasnoselskij}
    		\Until{$\|z_{j+1}-y_{j+1}\| \leq \epsilon_{\rm tol}$} \algorithmicthen\ \Return $y_{j+1}$ \Comment{termination criterion}
  	\end{algorithmic}
\end{algorithm}
Each iteration of \refalg{alg:krasnoselskij} is simple and geared towards an efficient embedded implementation utilizing automatic code-generation. It involves only matrix-vector multiplications and a small number of projections. The projection onto the non-convex set $\set{Z}$ in step \ref{alg:stepProjection} is the most expensive step. It can be evaluated in polynomial time using \refalg{alg:projection}, where at most $\sum_{k=1}^N m_k$ small-scale projections onto polyhedra are needed (\refalg{alg:projection}, step~\ref{alg:smallProjection}). State-of-the-art embedded solvers \cite{domahidi2012} or even explicit solutions \cite{herceg2013b} can be used for these projections.
For problems of the form \eqref{prob:MPCProblem}, the matrices $M_{\xi}$ and $W$ are block-banded with bandwidth $\max\{2n_x,n_u\}$, therefore steps \ref{alg:stepAff} and \ref{alg:stepKrasnoselskij} of \refalg{alg:krasnoselskij} can be evaluated very efficiently and have a computational complexity of $\bigo (N(n_x+n_u)^2 )$. This enables the automatic generation of efficient, embeddable code that is tailored to particular (parametric) problems.

\begin{algorithm}[H]
	\caption{$\euclproj{\set{Z}}(s)$, projection of $s$ onto $\set{Z}$} \label{alg:projection}
  	\begin{algorithmic}[1]
   		\Require{$s = (s_1,\ldots,s_N) \in \reals{n}$, $\set{Z}$ of form \eqref{eqn:Z}}
   		\For{$k=1,\ldots,\horizon$}
   			\State \algorithmicfor\ $i=1,\ldots,\NPwa_k$ \algorithmicdo\ $z_{k,i} = \euclproj{\set{Z}_{k,i}}(s_k)$ \Comment{small-scale projections}\label{alg:smallProjection}
   				%\State $z_{k,i} = \euclproj{\set{Z}_{k,i}}(s_k)$ \Comment{small-scale projection}\label{alg:smallProjection}
   			%\EndFor
   			\State $z_k \in \argmin_{i=1,\ldots,\NPwa_k} \|s_k-z_{k,i}\|$ \Comment{select closest}
   		\EndFor
   		\State \Return $z = (z_1,\ldots,z_\horizon)$
  	\end{algorithmic}
\end{algorithm}

%It should be noted that while the simplicity of \refalg{alg:krasnoselskij} is reminiscent of ADMM, the algorithms have important differences that make it possible to give local optimality and convergence guarantees in our case.

\subsection{Local optimality}\label{sec:locopt}
First, we will show that when \refalg{alg:krasnoselskij} converges, it converges to a local minimum, and equivalently that the fixed points of the operator $T_{\xi}$ correspond to local minima.
Any set $\set{Z}$ of form \eqref{eqn:Z} can be transformed into a finite union of closed, convex polyhedra $\set{Z}^i \subseteq \reals{n}$ with $\set{Z} = \bigcup_{i=1}^{I} \set{Z}^i$, where $I:=\prod_{k=1}^N m_k$.
This equivalent representation helps to simplify our theoretical considerations. The exact construction of the sets $Z^i$ does not affect the presented results.
Furthermore, we define the \emph{set of active components} of $\set{Z}$ at a point $z \in \set{Z}$ as $\activePoly{\set{Z}}(z) := \{i \in \{1,\ldots,I\} \sep{} z \in \set{Z}^i\}$.
To establish the subsequent theoretical results we will use the polar $\prnormalcone{\set{Z}}$ of the regular normal cone, defined in \cite[Equation~6(14), p.~215]{rockafellar1998} as $\prnormalcone{\set{Z}}(z) =\{v \sep{} \langle v,w \rangle \leq 0\,,\:\forall w \in \rnormalcone{\set{Z}}(z)\}$.
The following lemma establishes two important properties of the set $\set{Z}$ and its normal cone. These properties will be used in \refthm{thm:localOptimality} for proving local optimality. At any point $z\in\set{Z}$, \reflem{lem:convNCone} relates the set $\set{Z}$ to a local convexification $\prnormalcone{\set{Z}}(z)+\{z\}$, as illustrated in \reffig{fig:localConv}.
\begin{figure}
	\centering
	\begin{subfigure}[b]{0.46\columnwidth}
	\centering
	\begin{tikzpicture}[scale=0.45,cap=round]
	%\draw[style=help lines,step=0.5em] (-1.4,-1.4) grid (1.4,1.4);
	\fill[fill=color1light] (2,0) -- (3,1) -- (3,-0.5) -- cycle;
	\draw[very thick,color1] (2,0) -- (3,1) -- (3,-0.5) -- cycle;
	\fill[fill=color1light] (-1.5,-0.5) -- (-2.5,-2.5) -- (-3.5,-1.5) -- (-2.5,-0.5) -- cycle;
	\draw[very thick,color1] (-1.5,-0.5) -- (-2.5,-2.5) -- (-3.5,-1.5) -- (-2.5,-0.5) -- cycle;
	\fill[fill=color3verylight] (0,0) -- (-1.7910,-3.5766) arc (243.4:315:4);
		\node[below,color3dark] at (0.4,-2.5) {$\rnormalcone{\set{Z}_1}(z_1)$};
	\draw[color3light,very thick] (0,0) -- (-1.3433,-2.6825);
		\draw[color3light,very thick,dashed] (-1.3881,-2.7719) -- (-1.7910,-3.5766);
		\draw[color3light,very thick] (0,0) -- (2.1213,-2.1213);
		\draw[color3light,very thick,dashed] (2.1920,-2.1920) -- (2.8284,-2.8284);
	\fill[fill=verylightdarkgreen] (0,0) -- (2.475,2.475) arc (45:153.4:3.5);
		\node[above,darkgreen] at (-0.4,1.7) {$\prnormalcone{\set{Z}_1}(z_1)+\{z_1\}$};
	\draw[very thick,color1] (-3,1.5) -- (0,0);
	\draw[very thick,color1] (0,0) -- (2,2) node[below]{$\set{Z}_1$};
	\draw[very thick,color1] (-1.1,-2.8) -- (2.9,-0.8);
	\draw[thick,dotted,black] (0,0) circle (1.5);
		\draw[<->,thick,black] (0,0) -- (0,1.5) node[midway,left,black]{$\epsilon$};	
	\node at (0,0) [circle,fill=color2,inner sep=1] {};
		\node at (0,0) [right,color=color2]{$z_1$};	
	\end{tikzpicture}
	\end{subfigure}
	\begin{subfigure}[b]{0.46\columnwidth}
	\centering
	\begin{tikzpicture}[scale=0.45,cap=round]
	%\draw[style=help lines,step=0.5em] (-1.4,-1.4) grid (1.4,1.4);
	\draw[lightdarkgreen,very thick] (-1,-1) -- (2.2,2.2);
		\node[above,darkgreen] (rncname) at (0.2,3) {$\prnormalcone{\set{Z}_2}(z_2)+\{z_2\}$};
		\draw[lightdarkgreen,very thick,dashed] (-1.5,-1.5) -- (-1.1,-1.1);
		\draw[lightdarkgreen,very thick,dashed] (2.3,2.3) -- (2.6,2.6) node[midway] (rnc) {};
	\draw[darkgreen,thick] (rncname.south) edge[bend right,out=-90,in=-200,->] ($(rnc)+(-0.1,-0.05)$);
	\draw[very thick,color1] (-3.5,1.0) -- (-0.5,-0.5);
	\draw[very thick,color1] (-0.5,-0.5) -- (2,2) node at (2.15,1.5) {$\set{Z}_2$};
	\draw[very thick,color1] (-1.1,-2.8) -- (2.9,-0.8);
	\fill[fill=color1light] (2,0) -- (3,1) -- (3,-0.5) -- cycle;
	\draw[very thick,color1] (2,0) -- (3,1) -- (3,-0.5) -- cycle;
	\fill[fill=color1light] (-1.5,-0.5) -- (-2.5,-2.5) -- (-3.5,-1.5) -- (-2.5,-0.5) -- cycle;
	\draw[very thick,color1] (-1.5,-0.5) -- (-2.5,-2.5) -- (-3.5,-1.5) -- (-2.5,-0.5) -- cycle;
	\draw[color3light,very thick] (-1,2) -- (2.5,-1.5);
		\node[below,color3dark] at (2.8,-1.8) {$\rnormalcone{\set{Z}_2}(z_2)$}; 
		\draw[color3light,very thick,dashed] (-1.1,2.1) -- (-1.5,2.5);
		\draw[color3light,very thick,dashed] (2.6,-1.6) -- (3,-2);
	\draw[thick,dotted,black] (0.5,0.5) circle (1.3);
		\draw[<->,thick,black] (0.5,0.5) -- (0.5,1.8) node[midway,left,black]{$\epsilon$};
	\node at (0.5,0.5) [circle,fill=color2,inner sep=1] {};
		\node at (0.45,0.5) [right,color=color2]{$z_2$};	
	\node[] at (0,-3.5) {};
	\end{tikzpicture}
	\end{subfigure}
	\caption{Two cases of local convexifications ${\color{darkgreen}\prnormalcone{\set{Z}}(z)+\{z\}}$ of non-convex sets ${\color{color1}\set{Z}_1}$ and ${\color{color1}\set{Z}_2}$ at points ${\color{color2}z_1}$ (left) and ${\color{color2}z_2}$ (right), as described in \reflem{lem:convNCone}.} \label{fig:localConv}
\end{figure}
\begin{lemma}[Local convexification] \label{lem:convNCone}
For any $z\in\set{Z}$, the set $\set{Z}$ and the closed convex set $\prnormalcone{\set{Z}}(z)+\{z\}$ are related in the following ways:
	\begin{enumerate}[label=\roman*),ref=\ref{lem:convNCone}(\roman*)]
		\item\label{lem:convNConei} There exists an $\epsilon >0$ such that $\set{Z} \cap \ball_\epsilon(z) \subseteq \prnormalcone{\set{Z}}(z) + \{z\}$.
		\item\label{lem:convNConeii} Their regular normal cones at $z$ coincide, i.e., $\rnormalcone{\set{Z}}(z) = \rnormalcone{\prnormalcone{\set{Z}}(z)+\{z\}}(z)$.
	\end{enumerate}
\end{lemma}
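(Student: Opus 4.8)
The plan is to use the representation $\set{Z}=\bigcup_{i=1}^{I}\set{Z}^i$ as a finite union of closed convex polyhedra, together with the active set $\activePoly{\set{Z}}(z)$, and to reduce both statements to elementary properties of the (always closed and convex) regular normal cone and of polar duality.

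For \ref{lem:convNConei}, I would first pick $\epsilon>0$ small enough that $\ball_\epsilon(z)$ meets only the \emph{active} components: since the finitely many inactive polyhedra $\set{Z}^j$ (those with $z\notin\set{Z}^j$) are closed and do not contain $z$, each has a strictly positive distance to $z$, and choosing $\epsilon$ below the smallest of these distances forces every $u\in\set{Z}\cap\ball_\epsilon(z)$ to lie in some $\set{Z}^i$ with $i\in\activePoly{\set{Z}}(z)$. Fix such a $u$ (the trivial case $u=z$ gives $u-z=0\in\prnormalcone{\set{Z}}(z)$) and such an index $i$. By convexity of $\set{Z}^i$ the whole segment $\{z+t(u-z):t\in[0,1]\}$ lies in $\set{Z}^i\subseteq\set{Z}$, so the points $z+t_k(u-z)$ with $t_k\downarrow 0$ form an admissible sequence in the limsup defining $\rnormalcone{\set{Z}}(z)$; along that sequence the difference quotient is the constant $\langle w,u-z\rangle/\|u-z\|$, so for every $w\in\rnormalcone{\set{Z}}(z)$ we must have $\langle w,u-z\rangle\le 0$. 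By the definition of the polar this says $u-z\in\prnormalcone{\set{Z}}(z)$, i.e. $u\in\prnormalcone{\set{Z}}(z)+\{z\}$, which is the claimed inclusion.

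For \ref{lem:convNConeii}, set $D:=\prnormalcone{\set{Z}}(z)+\{z\}$. Since $\rnormalcone{\set{Z}}(z)$ is a closed convex cone \cite[Proposition~6.5, p.~200]{rockafellar1998}, its polar $\prnormalcone{\set{Z}}(z)$ is again a closed convex cone containing the origin; hence $z\in D$ and $D$ is closed and convex, consistent with the statement of the lemma. Because $D$ is convex, its regular normal cone at $z$ coincides with the ordinary normal cone of convex analysis, so $\rnormalcone{D}(z)=\{w:\langle w,v\rangle\le 0\ \text{for all}\ v\in\prnormalcone{\set{Z}}(z)\}=(\prnormalcone{\set{Z}}(z))^{\circ}=((\rnormalcone{\set{Z}}(z))^{\circ})^{\circ}$, and the last expression equals $\rnormalcone{\set{Z}}(z)$ by the bipolar theorem for closed convex cones. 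Alternatively, the inclusion $\rnormalcone{D}(z)\subseteq\rnormalcone{\set{Z}}(z)$ already follows from \ref{lem:convNConei} by locality and set-monotonicity of the regular normal cone, and only the reverse inclusion needs the bipolar identity.

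I do not anticipate a genuine obstacle. The one step that needs care is the localization at the start of \ref{lem:convNConei}: one must argue both that a small ball around $z$ sees $\set{Z}$ only through the active polyhedra, and -- more importantly -- that it then suffices to test directions $u-z$ arising from a single \emph{convex} component rather than from the nonconvex set $\set{Z}$ as a whole. Once that reduction is in place, parts \ref{lem:convNConei} and \ref{lem:convNConeii} are immediate consequences of closedness and convexity of the regular normal cone and of polarity; the remaining risk is mainly notational, namely keeping the regular normal cone, its polar, and the limiting normal cone distinct.
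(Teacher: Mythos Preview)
Your proposal is correct and follows essentially the same route as the paper. For part~\ref{lem:convNConeii} the two arguments are identical (bipolar theorem applied to the closed convex cone $\rnormalcone{\set{Z}}(z)$). For part~\ref{lem:convNConei} there is a small but harmless difference: the paper invokes \refprop{prop:normalConeCharact}, i.e., $\rnormalcone{\set{Z}}(z)=\bigcap_{i\in\activePoly{\set{Z}}(z)}\rnormalcone{\set{Z}^i}(z)$, and then uses the convex characterization of each $\rnormalcone{\set{Z}^i}(z)$, whereas you bypass that proposition and read off $\langle w,u-z\rangle\le 0$ directly from the limsup definition of $\rnormalcone{\set{Z}}(z)$ along the segment in $\set{Z}^i$; your version is slightly more self-contained, the paper's version reuses a structural lemma it needs elsewhere anyway.
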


We now use \reflem{lem:convNCone} to prove the first key result.
\begin{theorem}[Local optimality]\label{thm:localOptimality}
	If \refalg{alg:krasnoselskij} converges to a point $(\loc{y},\loc{z},\loc{s})$, then $\loc{y}=\loc{z}$ and $\loc{z}$ is a local minimum of \refprob{prob:PrimalProblem}. Moreover, $(\loc{z},\xi(\loc{z}-\loc{s}))$ is a regular KKT point of~\refprob{prob:ConsensusProblem}.
\end{theorem}
\begin{proof}
When \refalg{alg:krasnoselskij} converges to a point $(\loc{y},\loc{z},\loc{s})$, we have by definition that $\loc{z} = M_{\xi}\loc{s} + c_{\xi}$, $\loc{y} \in \euclproj{\set{Z}}(\loc{s})$ and $\loc{s} = \loc{s} - \gamma W(\loc{z}-\loc{y})$.
Together with the invertibility of $W$, it follows that $\loc{z} = \loc{y}$. Moreover, from the definition of $K_\xi$ it follows that $0 \in K_{\xi}(\loc{s})$, i.e., $\loc{s} \in \zero K_\xi$. \reflem{lem:opKKT} thus implies that $(\loc{z},\loc{\lambda})$, with $\loc{\lambda} := \xi(\loc{z},\loc{s})$ is a $\xi$-proximal KKT point of \refprob{prob:ConsensusProblem}, which implies that $(\loc{z},\loc{\lambda})$ is a regular KKT point of \refprob{prob:ConsensusProblem}, see \refsec{sec:KKTPoints}.
Given this, we now consider the auxiliary problem: 
%	\begin{subequations} \label{prob:locallyconvex}
%	\begin{align} 
%		\min_{z,y} \:&\cost{z}\\[-0.5em]
%		\suchthat & (z,y) \in \set{E} \times \left(\prnormalcone{\set{Z}}(\loc{z})+\{\loc{z}\}\right)\,,\\
%		& z=y\,.
%	\end{align}
%	\end{subequations}
%	\begin{equation} \label{prob:locallyconvex}
%		\min_{\substack{z,y\,\suchthat\,z=y\,,\\(z,y) \in \set{E} \times \left(\prnormalcone{\set{Z}}(\loc{z})+\{\loc{z}\}\right)}} \hspace*{-2em} \cost{z}\,.
%	\end{equation}
	\begin{equation} \label{prob:locallyconvex}
		\min_{z,y\,:\, z=y} \cost{z} + \charfunc{\set{E} \times (\prnormalcone{\set{Z}}(\loc{z})+\{\loc{z}\})}(z,y)\,.
	\end{equation}
	\refprob{prob:locallyconvex} is strictly convex and its KKT conditions are given as $z=y$ (primal feasibility) and
	\begin{equation} \label{eqn:polarKKT}
		0 \in \{Hz+h\} + \rnormalcone{\set{E}}(z) - \{\lambda\} \text{ and } 0 \in \rnormalcone{\prnormalcone{\set{Z}}(\loc{z})+\{\loc{z}\}}(y) + \{\lambda\}\,.
	\end{equation}
	By \reflem{lem:convNConeii}, \eqref{eqn:polarKKT} is equivalent to the regular KKT conditions~\eqref{def:RegularKKTPoints}. Therefore, $(\loc{z},\loc{\lambda})$ also satisfies \eqref{eqn:polarKKT}. By strict convexity, implied by $H \succ 0$, $(\loc{z},\loc{\lambda})$ is primal-dual optimal for \refprob{prob:locallyconvex}. Furthermore, using \reflem{lem:convNConei}, we know that there exists an $\epsilon>0$ such that $\loc{z} \in \set{Z} \cap \ball_\epsilon(\loc{z}) \subseteq \prnormalcone{\set{Z}}(\loc{z})+\{\loc{z}\}$, therefore, the pair $y=z=\loc{z}$ is also optimal for the non-convex problem
%	\begin{subequations} \label{prob:localopt}
%	\begin{align}
%		\min_{z,y} \:&\cost{z}\\[-0.5em]
%		\suchthat & (z,y) \in \set{E} \times \left(\set{Z} \cap \ball_{\epsilon}(\loc{z})\right)\,,\\
%		& z=y\,.
%	\end{align}
%	\end{subequations}
	\begin{equation*} \label{prob:localopt}
		\min_{z,y\,:\, z=y} \cost{z} + \charfunc{\set{E} \times \left(\set{Z} \cap \ball_{\epsilon}(\loc{z})\right)}(z,y)\,,
	\end{equation*}
	which is a local instance of \refprob{prob:ConsensusProblem}, where the feasible region is restricted to $\ball_{\epsilon}(\loc{z})$. This implies that $\loc{z}$ is locally optimal for \refprob{prob:ConsensusProblem} and \refprob{prob:PrimalProblem}.
\end{proof}

\subsection{Existence}
We have seen that for any regular KKT point we can ensure the existence of a $\xi$-proximal KKT point by choosing $\xi$ large enough. Therefore, if there exists a regular KKT point, we can ensure the existence of fixed points of the operator $T_\xi$, and therefore of \refalg{alg:krasnoselskij}, as demonstrated in the following theorem.
\begin{theorem}[Existence of fixed points] \label{thm:existence}
	If there exists a regular KKT point $(\kkt{z},\kkt{\lambda})$ of \refprob{prob:ConsensusProblem}, then there exists a $\bar{\xi} > 0$ such that \refalg{alg:krasnoselskij}, and equivalently $T_{\xi}$, has at least one fixed point for any $\xi \geq \bar{\xi}$.
\end{theorem}
\begin{proof}
	Given a regular KKT point $(\kkt{z},\kkt{\lambda})$ of \refprob{prob:ConsensusProblem}. By \refprop{prop:proxNormalsExistencei}, there exists a $\bar{\xi}>0$ such that for any $\xi\geq\bar{\xi}$ we have that $(\kkt{z},\kkt{\lambda})$ is a $\xi$-proximal KKT point. By \reflem{lem:opKKT}, $\kkt{s}:=\kkt{z}-\tfrac{1}{\xi}\kkt{\lambda} \in \zero K_{\xi}$ and thus $\kkt{s}$ is a fixed point of $T_{\xi}$ and $(\kkt{z},\kkt{y},\kkt{s})$, with $\kkt{y} = \kkt{z} = M_\xi \kkt{s} + c_\xi \in \euclproj{\set{Z}}(\kkt{s})$ is a fixed point of \refalg{alg:krasnoselskij}.
\end{proof}
In \reffig{fig:KKTpoints} we illustrate four examples for the existence and nature of KKT points. Figures \ref{fig:KKTpointsA}--\ref{fig:KKTpointsC} are cases where the proximal scaling $\xi > 0$ can be chosen such that fixed points of \refalg{alg:krasnoselskij} exist and we may find local minima using the proposed method. In \reffig{fig:KKTpointsD} the feasible set is a singleton and none of its corresponding KKT points are regular. In this case \refalg{alg:krasnoselskij} would not have a fixed point for any $\xi >0$, even though a generalized KKT point exists. Figures \ref{fig:KKTpointsA} and \ref{fig:KKTpointsB} illustrate common cases. On the other hand Figures \ref{fig:KKTpointsC} and \ref{fig:KKTpointsD} depict more rare cases.

\begin{figure}
	\centering
	\begin{subfigure}[b]{0.23\textwidth}
	\centering
	\begin{tikzpicture}[scale=0.5,cap=round]
	%\clip ($(-3.5,-1.5)+(0,-11em)$) rectangle (2.5,4.8);
	\clip (-3.5,-1.5) rectangle (2.5,4.8);
	%\draw[style=help lines,step=0.5em] (-1.4,-1.4) grid (1.4,1.4);
	\coordinate (xs) at (-0.5,1);
	\foreach \i in {5,15,25,35,45,55}{
		\fill[fill=orange!\i] (xs) circle ({(65-\i)/25});};
	\node[green!60!black] at (-2,2) {$\set{E}=\reals{2}$};
	\draw[very thick] (-3,0) -- (0,0) node[midway,below]{$\set{Z}_1$};
	\draw[very thick,color1] (0,0) -- (2,2) node[below=0.1]{$\set{Z}_2$};
	\node at (-0.5,0) [circle,fill=color2,inner sep=1] {}; \node at (-0.5,0) [above,color=color2]{$z_1$};
	\draw[->,thick,color2] (-0.5,0) -- (-0.5,-1) node[right]{$\lambda_1$};
	\node at (0.25,0.25) [circle,fill=color2,inner sep=1] {}; \node at (0.25,0.25) [right,color=color2]{$z_2$};
	\draw[->,thick,color2] (0.25,0.25) -- (1,-0.5) node[right]{$\lambda_2$};
	
	\begin{scope}[xshift=-3.6cm,yshift=2cm] 
		\clip (0,0) rectangle (7,2.8);
		\begin{axis}[scale=0.9,hide axis] 
			\addplot[black,domain=-3:0, no markers,samples=20] {0.5*x^2+(0.5)*x}; 
			\addplot[black,dotted,domain=0:1, no markers,samples=20] {0.5*x^2+(0.5)*x}; 
			\addplot[color1,domain=0:2, no markers,samples=20] {x^2+(0.5-1)*x}; 
			\addplot[color1,dotted,domain=-1:0, no markers,samples=20] {x^2+(0.5-1)*x};
			\node[circle,fill=color2,inner sep=1.8] at (axis cs:-0.5,-0.125) {};
			\node[circle,fill=color2,inner sep=1.8] at (axis cs:0.25,-0.0625) {};
		\end{axis}
	\end{scope}
	\end{tikzpicture}
	\caption{	} \label{fig:KKTpointsA}
	%\caption{There are two regular KKT points, $({\color{color2}z_1},{\color{color2}\lambda_1})$ and $({\color{color2}z_2},{\color{color2}\lambda_2})$. Both correspond to local minima.} \label{fig:KKTpointsA}
	\end{subfigure}
	\hspace*{0.01\textwidth}
	\begin{subfigure}[b]{0.23\textwidth}
	\centering
	\begin{tikzpicture}[scale=0.5,cap=round]
	\clip (-3.5,-1.5) rectangle (2.5,4.8);
	%\draw[style=help lines,step=0.5em] (-1.4,-1.4) grid (1.4,1.4);
	\coordinate (xs) at (0,1);
	\foreach \i in {5,15,25,35,45,55}{
		\fill[fill=orange!\i] (xs) circle ({(65-\i)/25});};
	\node[green!60!black] at (-2,2) {$\set{E}=\reals{2}$};
	\draw[very thick] (-3,0) -- (0,0) node[midway,below]{$\set{Z}_1$};
	\draw[very thick,color1] (0,0) -- (2,2) node[below=0.1]{$\set{Z}_2$};
	\node at (0.5,0.5) [circle,fill=color2,inner sep=1] {}; \node at (0.5,0.5) [right,color=color2]{$z_1$};
	\draw[->,thick,color2] (0.5,0.5) -- (1,0) node[right]{$\lambda_1$};
	\node at (0,0) [circle,fill=color2,inner sep=1] {}; \node at (-0.1,0) [above,color=color2]{$z_2$};
	\draw[->,thick,color2] (0,0) -- (0,-1) node[right]{$\lambda_2$};
	
	\begin{scope}[xshift=-3.6cm,yshift=1.5cm] 
		\clip (0,0) rectangle (7,3);
		\begin{axis}[scale=0.9,hide axis]
			\addplot[black,domain=-3:0, no markers,samples=20] {0.5*x^2+(0)*x}; 
			\addplot[color1,domain=0:2, no markers,samples=20] {x^2+(0-1)*x}; 
			\addplot[black,dotted,domain=0:1, no markers,samples=20] {0.5*x^2+(0)*x}; 
			\addplot[color1,dotted,domain=-1:0, no markers,samples=20] {x^2+(0-1)*x}; 
			\node[circle,fill=color2,inner sep=1.8] at (axis cs:0,0) {};
			\node[circle,fill=color2,inner sep=1.8] at (axis cs:0.5,-0.25) {};
		\end{axis}
	\end{scope}
	\end{tikzpicture}
	\caption{} \label{fig:KKTpointsB}
	%\caption{There is one regular KKT point $({\color{color2}z_1},{\color{color2}\lambda_1})$, corresponding to a local minimum and one non-regular KKT point $({\color{color2}z_2},{\color{color2}\lambda_2})$, corresponding to a critical point that is not locally optimal.} \label{fig:KKTpointsB}
	\end{subfigure}
	\hspace*{0.01\textwidth}
	\begin{subfigure}[b]{0.23\textwidth}
	\centering
	\begin{tikzpicture}[scale=0.5,cap=round]
	\clip (-3.5,-1.5) rectangle (2.5,4.8);
	%\draw[style=help lines,step=0.5em] (-1.4,-1.4) grid (1.4,1.4);
	\coordinate (xs) at (-0.5,0.5);
	\foreach \i in {5,15,25,35,45,55}{
		\fill[fill=orange!\i] (xs) circle ({(65-\i)/25});};
	\draw[very thick,green!60!black,dashed] (2.1,2.1) -- (-1,-1) node[below,near end]{$\set{E}$};
	\draw[very thick] (-3,0) -- (0,0) node[midway,below]{$\set{Z}_1$};
	\draw[very thick,color1] (0,0) -- (2,2) node[below=0.1]{$\set{Z}_2$};
	\node at (0,0) [circle,fill=color2,inner sep=1] {}; \node at (0,0) [right,color=color2]{$z$};
	\draw[->,thick,color2] (0,0) -- (0.5,-0.5) node[right]{$\lambda_1$};
	\draw[->,thick,color2] (0,0) -- (-0.5,0.5) node[above]{$\lambda_2$};
	
	\begin{scope}[xshift=-3.6cm,yshift=1.2cm]
		\clip (0,0) rectangle (7,3);
		\begin{axis}[scale=0.9,hide axis]
			\addplot[green!60!black,domain=-2:2, no markers,samples=20,dotted] {x^2}; 
			\addplot[black,domain=-3:0, no markers,samples=20,dotted] {0.5*x^2+(0.5)*x}; 
			\addplot[color1,domain=0:2, no markers,samples=20] {x^2}; 
			\node[circle,fill=color2,inner sep=1.8] at (axis cs:0,0) {};
		\end{axis}
	\end{scope}
	\end{tikzpicture}
	\caption{} \label{fig:KKTpointsC}
	%\caption{There is one non-regular KKT point $({\color{color2}z},{\color{color2}\lambda_1})$ corresponding to the unique global minimum. However $({\color{color2}z},{\color{color2}\lambda_2})$ is a regular KKT point that corresponds to the same global minimum.} \label{fig:KKTpointsC}
	\end{subfigure}	
	\hspace*{0.01\textwidth}
	\begin{subfigure}[b]{0.23\textwidth}
	\centering
	\begin{tikzpicture}[scale=0.5,cap=round]
	\clip (-3.5,-1.5) rectangle (2.5,4.8);
	%\draw[style=help lines,step=0.5em] (-1.4,-1.4) grid (1.4,1.4);
	\coordinate (xs) at (-0.5,0.5);
	\foreach \i in {5,15,25,35,45,55}{
		\fill[fill=orange!\i] (xs) circle ({(65-\i)/25});};
	\draw[very thick,green!60!black,dashed] (-3,3) -- (1,-1) node[below,near start]{$\set{E}$};
	\draw[very thick] (-3,0) -- (0,0) node[midway,below]{$\set{Z}_1$};
	\draw[very thick,color1] (0,0) -- (2,2) node[below=0.1]{$\set{Z}_2$};
	\node at (0,0) [circle,fill=color2,inner sep=1] {}; \node at (0,0) [right,color=color2]{$z$};
	\draw[->,thick,color2] (0,0) -- (0,-1) node[left]{$\lambda_1$};
	\draw[->,thick,color2] (0,0) -- (0.5,-0.5) node[right]{$\lambda_2$};
	
	\begin{scope}[xshift=-3.6cm,yshift=1.2cm]
		\clip (0,0) rectangle (7,3);
		\begin{axis}[scale=0.9,hide axis]
			\addplot[green!60!black,domain=-2.5:1.5, no markers,samples=20,dotted] {x^2+x}; 
			\addplot[black,domain=-3:0, no markers,samples=20,dotted] {0.5*x^2+(0.5)*x}; 
			\addplot[color1,domain=0:2, no markers,samples=20,dotted] {x^2};
			\node[circle,fill=color2,inner sep=1.8] at (axis cs:0,0) {};
		\end{axis}
	\end{scope}
	\end{tikzpicture}
	\caption{} \label{fig:KKTpointsD}
	%\caption{There are only two KKT points $({\color{color2}z},{\color{color2}\lambda_1})$ and $({\color{color2}z},{\color{color2}\lambda_2})$ corresponding to the unique global minimum, both are non-regular.} \label{fig:KKTpointsD}
	\end{subfigure}	
	\caption{KKT points of $\min_{z,y \in {\color{green!60!black}\set{E}}\times\left({\color{black}\set{Z}_1} \cup {\color{color1}\set{Z}_2}\right)\,:\,x=y} \tfrac{1}{2}z^\transp z + h^\transp z$. {\color{color3}Level curves} of the objective and the objective value evaluated on ${\color{black}\set{Z}_1}$, ${\color{color1}\set{Z}_2}$ and ${\color{green!60!black}\set{E}}$ are illustrated in the respective colors, solid where feasible, dotted where infeasible.
	(\subref{fig:KKTpointsA}) $({\color{color2}z_1},{\color{color2}\lambda_1})$ and $({\color{color2}z_2},{\color{color2}\lambda_2})$ are regular KKT points corresponding to local minima.
	(\subref{fig:KKTpointsB}) $({\color{color2}z_1},{\color{color2}\lambda_1})$, is a regular KKT point corresponding to a local minimum, $({\color{color2}z_2},{\color{color2}\lambda_2})$ a non-regular KKT point  corresponding to a (not locally optimal) critical point.
	(\subref{fig:KKTpointsC}) $({\color{color2}z},{\color{color2}\lambda_1})$, $({\color{color2}z},{\color{color2}\lambda_2})$ are a non-regular and a regular KKT point, corresponding to the same global minimum.
	(\subref{fig:KKTpointsD}) $({\color{color2}z},{\color{color2}\lambda_1})$, $({\color{color2}z},{\color{color2}\lambda_2})$ are the only KKT points corresponding to the unique global minimum, both are non-regular.}
	\label{fig:KKTpoints}
\end{figure}

\subsection{Local convergence to local minima} \label{sec:convergence}
In this section, we prove that the presented method converges locally to local minima.
We say that a fixed point $\loc{s}$ \emph{corresponds} to a local minimum $\loc{z}$, if the $\xi$-proximal KKT point $(\loc{z},\loc{\lambda})$ can be constructed from $\loc{s}$ through \eqref{eqn:zerosK}. Note that, as shown in \reflem{lem:opKKT} and \refthm{thm:localOptimality}, any fixed point of $T_\xi$ can be used to reconstruct a local minimum. We denote the set of fixed points corresponding to $\loc{z}$ by $\fixc{T_\xi}{\loc{z}} := \{ s \sep{} (\loc{z},\xi(\loc{z}-s)) \text{ satisfies \eqref{def:ProximalKKTPoints}}\}$. The set $\fixc{T_\xi}{\loc{z}}$ is convex for fixed $\loc{z}$, because $\rnormalcone{\set{E}}(\loc{z})$ is a convex set for any $\loc{z}$ and because the set of $\lambda$'s satisfying $\loc{z} \in \euclproj{\set{Z}}(\loc{z}-\tfrac{1}{\xi}\lambda)$ form the \emph{proximal normal cone} of $\set{Z}$ at $\loc{z}$ which is also convex, see \cite[p.~213]{rockafellar1998}.
We make the following assumptions:
\begin{enumerate}[label=\textbf{(A\arabic*)},ref=A\arabic*]
	\item\label{ass:existence} \refprob{prob:ConsensusProblem} has at least one regular KKT point.
	\item\label{ass:nondeg} Given any local minimum $\loc{z}$ of \refprob{prob:PrimalProblem}  such that $\fixc{T_{\bar{\xi}}}{\loc{z}} \neq \varnothing$ and $\loc{z} \not\in \fixc{T_{\bar{\xi}}}{\loc{z}}$ for some $\bar{\xi} > 0$. There exists a $\xi \geq \bar{\xi}$ and a fixed point $\loc{s}\in \fixc{T_\xi}{\loc{z}}$ with multipliers $\loc{\lambda} := \xi(\loc{z}-\loc{s})$ such that $-\loc{\lambda}\in\relint\rnormalcone{\set{Z}}(\loc{z})$.
%	\item\label{ass:nondeg} There exists a $\bar{\xi}>0$ such that
%		\begin{enumerate}[label=(\roman*)]
%			\item\label{ass:nondeg1} $\bar{\xi} > {\lambda_{\rm min}^{+}(R)}^{-1}$, i.e., \eqref{eqn:xiCond},
%			\item\label{ass:nondeg2} $\bar{\xi}$ satisfies \refprop{prop:xilb}, and
%			\item\label{ass:nondeg3} for any local optimum $\loc{z}$, and any $\xi \geq \bar{\xi}$, for which $\fixc{T_\xi}{\loc{z}} \neq \varnothing$, there exists a fixed point $\loc{s}\in \fixc{T_\xi}{\loc{z}}$ that satisfies $\loc{s}\in\relint\rnormalcone{\set{Z}}(\loc{z}) + \{\loc{z}\}$.
%		\end{enumerate}
	\item\label{ass:Z}  The structure of set $\set{Z}$ is such that, for any $z \in \set{Z}$, either $\rnormalcone{\set{Z}}(z) = \{0\}$, or there exists an $\epsilon > 0$ such that for all $w \in \rnormalcone{\set{Z}}(z)^\orth \cap \ball_\epsilon$, we have $z+w \in \set{Z}$.
\end{enumerate}
Assumption~\refass{ass:existence} ensures the existence of regular KKT points. It furthermore implies the existence of fixed points of $T_\xi$ for $\xi$ large enough, according to \refthm{thm:existence}. 
Assumption~\refass{ass:nondeg} can be understood as a \emph{non-degeneracy} condition on the multipliers.
Assumption~\refass{ass:Z} is an assumption on the geometry of $\set{Z}$ and ensures mild local regularity.
\begin{figure}
	\centering
	\begin{tikzpicture}[scale=0.5,cap=round]
	%\draw[style=help lines,step=0.5em] (-1.4,-1.4) grid (1.4,1.4);
	\fill[color1light!50] (0,0) -- (1,1) -- (-3,1) -- (-4,0) -- cycle;
	\draw[color1light,thick] (0,0) -- (1,1) -- (-3,1) -- (-4,0) -- cycle;
	\fill[color1light!50] (0,-1.4) -- (0,1.4) -- node[midway,above,blue] {$\set{Z}$} (-4,1.4) -- (-4,-1.4) -- cycle;
	\draw[color1light,thick] (0,-1.4) -- (0,1.4) -- (-4,1.4) -- (-4,-1.4) -- cycle;
	\fill[color1light!50] (-1,-1) -- (0,0) -- (-4,0) -- (-5,-1) -- cycle;
	\draw[color1light,thick] (-1,-1) -- (0,0) -- (-4,0) -- (-5,-1) -- cycle;
	\draw[color1light,thick] (0,0) -- (-4,0);
	\draw[color3light,very thick] (0,0) -- (2.5,0);
	\draw[color3light,very thick,dotted] (2.5,0) -- node[midway,above,color3dark] {$\rnormalcone{\set{Z}}(z)$} (3,0);
	\draw[->,thick,black,dashed] (0,0) -- (0.5,1.2) node[midway, right] {$w$};
	\node at (0,0) [circle,fill=color2,inner sep=1] {}; \node at (0.2,0) [color2, below] {$z$};
	\end{tikzpicture}
	\caption{A set ${\color{blue}\set{Z}}$ where \refass{ass:Z} is violated at point ${\color{color2}z}$. We see that $w$ is orthogonal to ${\color{color3dark}\rnormalcone{\set{Z}}(z)}$, but ${\color{color2}z}+\epsilon w \not\in {\color{blue}\set{Z}}$ for any $\epsilon > 0$.}
	\label{fig:ZCounterExample}
\end{figure}
These assumptions are substantially less restrictive than common assumptions for non-convex optimization methods such as smoothness, Clarke- or prox-regularity of the constraint set $\set{E} \cap \set{Z}$ at critical points.
In fact, we show in \refsec{sec:nondegeneracy} that \refass{ass:nondeg} holds for any given \refprob{prob:PrimalProblem} with (almost) arbitrary convex quadratic objective and in \refapp{app:assCheck} we present~\refalg{alg:assZCheck} which provides a necessary and sufficient condition for \refass{ass:Z}.
\refalg{alg:assZCheck} is a combinatorial algorithm that enumerates the active sets and components of $\set{Z}$. It may therefore perform badly for anything but small dimensions. However, when the sets $\set{Z}_{k,i}$ of $\set{Z}$ are of low dimension, which is often the case in structured problems such as hybrid MPC problems, the check can be performed separately for each $k$. This significantly reduces the computational burden. Moreover, this check can be performed \emph{once and offline} for all parameters $\theta$.
Note that \refass{ass:Z} does not hold for all instances of \refprob{prob:PrimalProblem}. The sets in \reffig{fig:localConv} and \reffig{fig:KKTpoints}, as well as the numerical examples in~\refsec{sec:numerical} satisfy \refass{ass:Z}, whereas \reffig{fig:ZCounterExample} illustrates a simple counter-example in three dimensions.
Moreover, when \refass{ass:Z} is violated, \refalg{alg:krasnoselskij} may still converge. If it converges, the solution is guaranteed to be a local minimum independently of whether \refass{ass:Z} holds, as stated in \refthm{thm:localOptimality}.
%It is not clear whether \refass{ass:Z} holds for all problems of the form of \refprob{prob:MPCProblem}.

In \refalg{alg:krasnoselskij}, we apply the Krasnoselskij iteration to the operator $T_\xi$. This iteration is known to converge \emph{globally} when the operator $T_\xi$ is nonexpansive, which is the case when $\set{Z}$ is \emph{convex}.
To show \emph{local} convergence for the \emph{non-convex} case in \refthm{thm:convergence}, we need $T_\xi$ to be nonexpansive in a neighborhood around fixed points. Starting in such a neighborhood, we will converge via the same argument. \refthm{thm:convergence} is proved using two auxiliary lemmas. 
In \reflem{lem:niceproj} we show that, in a neighborhood around almost all fixed points, the projection $\euclproj{\set{Z}}$ behaves like a projection onto a convex set. Therefore, it is locally firmly nonexpansive, single-valued and continuous \cite[Proposition~4.8, p.~61]{bauschke2011}.
Then, in \reflem{lem:niceop}, we use this property, together with the particular construction of the operator $T_\xi,$ to show that $T_\xi$ is nonexpansive, single-valued and continuous in a neighborhood around almost all fixed points. This is used in \refthm{thm:convergence} to guarantee the \emph{local convergence} of \refalg{alg:krasnoselskij} to a \emph{local minimum}, almost always.
\begin{lemma} \label{lem:niceproj}
	Let Assumptions \refass{ass:existence}--\refass{ass:Z} hold. For any $\xi$ large enough, and any local minimum $\loc{z}$ to \refprob{prob:PrimalProblem} with $\loc{z} \not\in \fix T_\xi$ and $\fixc{T_\xi}{\loc{z}} \neq \varnothing$; we have that for all fixed points $\loc{s} \in \fixc{T_\xi}{\loc{z}}$, except a measure zero subset, there exists an $\epsilon > 0$ such that the projection $\euclproj{\set{Z}}$ is single-valued, continuous and firmly nonexpansive on $\ball_{\epsilon}(\loc{s})$.
\end{lemma}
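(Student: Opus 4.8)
The plan is to identify, near a generic fixed point $\loc{s} \in \fixc{T_\xi}{\loc{z}}$, the projection $\euclproj{\set{Z}}$ with the projection onto the single closed convex polyhedron $C := \prnormalcone{\set{Z}}(\loc{z}) + \{\loc{z}\}$ that already appeared in the proof of \refthm{thm:localOptimality}; the conclusion then follows because $\euclproj{C}$ is single-valued, continuous and firmly non-expansive \cite[Proposition~4.8, p.~61]{bauschke2011}. First I would record what makes $\loc{z}$ non-degenerate. Since $\loc{z} \notin \fix T_\xi$, the zero multiplier is inadmissible, hence $\rnormalcone{\set{Z}}(\loc{z}) \neq \{0\}$ and the second alternative of \refass{ass:Z} yields an $\epsilon_1 > 0$ with $\loc{z} + w \in \set{Z}$ for all $w \in \rnormalcone{\set{Z}}(\loc{z})^{\orth} \cap \ball_{\epsilon_1}$; \reflem{lem:convNCone} yields an $\epsilon_2 > 0$ with $\set{Z} \cap \ball_{\epsilon_2}(\loc{z}) \subseteq C$ and $\rnormalcone{C}(\loc{z}) = \rnormalcone{\set{Z}}(\loc{z})$. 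Next, using $\rnormalcone{\set{E}}(\loc{z}) = \operatorname{range} A^\transp$, one has $\fixc{T_\xi}{\loc{z}} = \mathcal{A}_\xi \cap \{s : \loc{z} \in \euclproj{\set{Z}}(s)\}$ with $\mathcal{A}_\xi := \{s : \xi(\loc{z}-s) - H\loc{z} - h \in \operatorname{range} A^\transp\}$ an affine subspace whose distance to $\loc{z}$ is positive (as $\loc{z} \notin \fix T_\xi$) and tends to $0$ as $\xi \to \infty$. Together with the observation that every point of $\loc{z} + \rnormalcone{\set{Z}}(\loc{z})$ sufficiently close to $\loc{z}$ satisfies $\euclproj{\set{Z}}(\cdot) = \{\loc{z}\}$ (its nearest point in $\set{Z}$ must lie in $\ball_{\epsilon_2}(\loc{z}) \subseteq C$, where $\euclproj{C}(\cdot) = \loc{z}$), this lets me bootstrap \refass{ass:nondeg}: for all $\xi$ large enough $\fixc{T_\xi}{\loc{z}}$ contains a nonempty relatively open subset of $\ball_{\epsilon_2/2}(\loc{z}) \cap (\loc{z} + \relint \rnormalcone{\set{Z}}(\loc{z}))$, on which $\euclproj{\set{Z}} \equiv \{\loc{z}\}$.

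Fix such a $\xi$. I claim the set $\mathcal{N} := \{\loc{s} \in \fixc{T_\xi}{\loc{z}} : \loc{s}-\loc{z} \notin \relint \rnormalcone{\set{Z}}(\loc{z})\} \cup \{\loc{s} \in \fixc{T_\xi}{\loc{z}} : \euclproj{\set{Z}}(\loc{s}) \neq \{\loc{z}\}\}$ has measure zero in $\fixc{T_\xi}{\loc{z}}$. The first part is contained in the intersection of $\fixc{T_\xi}{\loc{z}}$ with $\loc{z}$ translated by the (finitely many) proper faces of the polyhedral cone $\rnormalcone{\set{Z}}(\loc{z})$; since the affine hull of $\fixc{T_\xi}{\loc{z}}$ meets $\loc{z} + \relint \rnormalcone{\set{Z}}(\loc{z})$ it lies in none of those faces, so this intersection is lower-dimensional. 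For the second part, write $\set{Z} = \bigcup_{i=1}^{I} \set{Z}^i$: if $\euclproj{\set{Z}}(\loc{s}) \neq \{\loc{z}\}$ for $\loc{s} \in \fixc{T_\xi}{\loc{z}}$, then $\operatorname{dist}(\loc{s},\set{Z}^j) = \|\loc{s}-\loc{z}\|$ and $\euclproj{\set{Z}^j}(\loc{s}) \neq \loc{z}$ for some $j$; when $\loc{z} \in \set{Z}^j$ this contradicts uniqueness of the projection onto the convex polyhedron $\set{Z}^j$, so only indices $j$ with $\loc{z} \notin \set{Z}^j$ contribute. For such a $j$ the relevant set sits inside $\{g_j = 0\}$ where $g_j(s) := \operatorname{dist}^2(s,\set{Z}^j) - \|s-\loc{z}\|^2$ is $C^1$ with $\nabla g_j(s) = 2(\loc{z} - \euclproj{\set{Z}^j}(s))$, which is nonzero there; restricted to $\fixc{T_\xi}{\loc{z}}$, $g_j$ is quadratic on each cell cut out by a normal-cone region of $\set{Z}^j$, and on a full-dimensional cell it either defines a proper quadric or vanishes identically. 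In the latter case $\euclproj{\set{Z}^j}$ would be constant on the affine hull of $\fixc{T_\xi}{\loc{z}}$, say equal to $p \in \set{Z}^j$ with $p \neq \loc{z}$, and that affine hull would lie in the perpendicular bisector of $p$ and $\loc{z}$ — so $p$ would be a nearest point of $\set{Z}$ to \emph{every} point of $\fixc{T_\xi}{\loc{z}}$, contradicting $\euclproj{\set{Z}} \equiv \{\loc{z}\}$ on the relatively open subset near $\loc{z}$. Hence $\mathcal{N}$ is measure zero.

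Finally, take $\loc{s} \in \fixc{T_\xi}{\loc{z}} \setminus \mathcal{N}$, so $\loc{s}-\loc{z} \in \relint \rnormalcone{\set{Z}}(\loc{z})$ and $\euclproj{\set{Z}}(\loc{s}) = \{\loc{z}\}$; I would show $\euclproj{\set{Z}} = \euclproj{C}$ on a small ball $\ball_\epsilon(\loc{s})$. Writing $s - \loc{z} = v_1 + v_2$ with $v_1 \in \operatorname{span}\rnormalcone{\set{Z}}(\loc{z})$ and $v_2 \in \rnormalcone{\set{Z}}(\loc{z})^{\orth}$: for $s$ near $\loc{s}$, $v_1$ is near $\loc{s}-\loc{z} \in \relint \rnormalcone{\set{Z}}(\loc{z})$ hence still in $\rnormalcone{\set{Z}}(\loc{z})$, while $\|v_2\| \le \|s - \loc{s}\|$; since $C$ is the translate by $\loc{z}$ of the polar of $\rnormalcone{\set{Z}}(\loc{z})$, the Moreau decomposition gives $\euclproj{C}(s) = \loc{z} + v_2$, which lies in $\set{Z}$ by \refass{ass:Z} once $\|s-\loc{s}\| < \epsilon_1$. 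Moreover $\euclproj{\set{Z}}$ has closed graph ($\set{Z}$ closed) and is locally bounded, so $\euclproj{\set{Z}}(\loc{s}) = \{\loc{z}\}$ forces $\euclproj{\set{Z}}(s) \subseteq \ball_{\epsilon_2}(\loc{z})$ for $s$ near $\loc{s}$. For $s$ in a small enough $\ball_\epsilon(\loc{s})$ both facts hold: $\euclproj{C}(s) \in \set{Z}$ gives $\operatorname{dist}(s,\set{Z}) \le \operatorname{dist}(s,C)$, while any $z^{\ast} \in \euclproj{\set{Z}}(s)$ lies in $\set{Z} \cap \ball_{\epsilon_2}(\loc{z}) \subseteq C$ and gives $\operatorname{dist}(s,C) \le \operatorname{dist}(s,\set{Z})$; equality then forces $\euclproj{\set{Z}}(s) = \{\euclproj{C}(s)\}$. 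Thus $\euclproj{\set{Z}}$ coincides on $\ball_\epsilon(\loc{s})$ with $\euclproj{C}$, which is single-valued, continuous and firmly non-expansive. I expect the delicate step to be the measure-zero estimate of the middle paragraph — precisely ruling out that $\fixc{T_\xi}{\loc{z}}$ itself is contained in one of the ``equidistant'' quadrics — which is where \refass{ass:nondeg}, together with the local description of $\set{Z}$ around $\loc{z}$ furnished by \refass{ass:Z} and \reflem{lem:convNCone}, is essential.
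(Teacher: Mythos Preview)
Your approach is essentially the paper's: first isolate a measure-zero exceptional subset of $\fixc{T_\xi}{\loc{z}}$, then for the remaining fixed points identify $\euclproj{\set{Z}}$ locally with $\euclproj{C}$ for $C = \prnormalcone{\set{Z}}(\loc{z}) + \{\loc{z}\}$ via the orthogonal decomposition $s-\loc{z}=v_1+v_2$ along $\operatorname{span}\rnormalcone{\set{Z}}(\loc{z})$ and its complement, invoking \refass{ass:Z} and \reflem{lem:convNCone} exactly as the paper does (your appeal to outer semicontinuity of $\euclproj{\set{Z}}$ plays the role of the paper's \refprop{prop:projqne}). The only notable difference is the measure-zero step: the paper leans on convexity of $\fixc{T_\xi}{\loc{z}}$ and shows that the good set is precisely $\relint\fixc{T_\xi}{\loc{z}}$, by writing $\{s:\loc{z}\in\euclproj{\set{Z}}(s)\}$ as the intersection of $\loc{z}+\rnormalcone{\set{Z}}(\loc{z})$ with a convex sublevel set, taking relative interiors of each piece, and applying \cite[Proposition~2.42]{rockafellar1998} (nonemptiness of the intersection of relative interiors being supplied by \refass{ass:nondeg} plus a rescaling in~$\xi$); the bad set is then a relative boundary, automatically measure zero. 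Your direct argument via proper faces of the normal cone and the equidistance quadrics $g_j$ reaches the same conclusion but is longer and requires the perpendicular-bisector contradiction. One small imprecision there: in the ``vanishes identically'' case $\euclproj{\set{Z}^j}$ is only constant on the cell, not on the whole affine hull; what does extend to the affine hull is the affine identity $\|s-p\|=\|s-\loc{z}\|$, which is all you actually use afterwards.
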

Note that \reflem{lem:niceproj} is weaker than prox-regularity of $\set{Z}$. Prox-regularity of $\set{Z}$ is equivalent to $\euclproj{\set{Z}}$ being single-valued, Lipschitz-continuous and monotone in a neighborhood around any point $z \in \set{Z}$, see \cite[Theorem~1.3~(i)--(k), p.~5234]{poliquin2000}. \reflem{lem:niceproj} makes a similar statement about $\euclproj{\set{Z}}$ for points in the neighborhood $\ball_{\epsilon}(\loc{s})$ of $\loc{s} \in \fixc{T_\xi}{\loc{z}}$, with $\epsilon > 0$. Because \reflem{lem:niceproj} only holds when $\loc{z} \not\in \fix T_\xi$, the neighborhood $\ball_{\epsilon}(\loc{s})$ can always be chosen such that it does not intersect with $\set{Z}$. In fact, \reffig{fig:proxregularity} illustrates a simple example, where Assumptions~\refass{ass:existence}--\refass{ass:Z}, and thereby \reflem{lem:niceproj}, hold for a set $\set{Z}$ that is not prox-regular at $z = 0$.
\begin{figure}
	\centering
	\begin{tikzpicture}[scale=0.5,cap=round]
	%\clip (-3.5,-1.5) rectangle (2.5,4.8);
	%\draw[style=help lines,step=0.5em] (-1.4,-1.4) grid (1.4,1.4);
	\coordinate (xs) at (0,1);
	%\foreach \i in {5,15,25,35,45,55}{
	%	\fill[fill=orange!\i] (xs) circle ({(65-\i)/25});};
	\fill[fill=color2verylight] (0,0) -- (2.25,2.25) arc (45:180:3.182);
	\node[color=color2] at (1.2,2.8) {$\prnormalcone{\set{Z}}(0)$};
	\draw[very thick] (-3,0) -- (0,0) node[midway,below]{$\set{Z}_1$};
	\draw[very thick,color1] (0,0) -- (2,2) node[below=0.1]{$\set{Z}_2$};
	\fill[fill=verylightdarkgreen] (0,0) -- (0,3) arc (90:135:3);
		\draw[lightdarkgreen,very thick] (0,0) -- (0,2.5);
		\draw[lightdarkgreen,very thick,dashed] (0,2.5) -- (0,3);
		\node[darkgreen] at (-0.7,1.65) {$\set{H}$};
		\draw[lightdarkgreen,very thick] (0,0) -- (-1.77,1.77);
		\draw[lightdarkgreen,very thick,dashed] (-1.77,1.77) -- (-2.12,2.12);
	\fill[fill=color3verylight] (0,0) -- (0,-3) arc (270:315:3);
		\node[below,color3dark] at (1.2,-1.9) {$\fixc{T_\xi}{0}$};
		\draw[color3light,very thick] (0,0) -- (0,-2.5);
		\draw[color3light,very thick,dashed] (0,-2.5) -- (0,-3);
		\draw[color3light,very thick] (0,0) -- (1.77,-1.77);
		\draw[color3light,very thick,dashed] (1.77,-1.77) -- (2.12,-2.12);
	\node at (0,0) [circle,fill=color2,inner sep=1] {}; \node at (0,0.1) [right,color=color2]{$z=0$};
	\end{tikzpicture}
	\caption{The point ${\color{color2}z=0}$ is locally optimal for the problem $\min_{z \in \set{Z}} \tfrac{1}{2}z^\transp z + h^\transp z$, with $\set{Z} := {\color{black}\set{Z}_1} \cup {\color{color1}\set{Z}_2}$, if and only if $h \in {\color{darkgreen}\set{H}} := -\rnormalcone{\set{Z}}(0)$. It can be verified that for all $h \in {\color{darkgreen}\set{H}}$ there exist multipliers $\lambda$ such that $(0,\lambda)$ is a regular KKT point, i.e., \refass{ass:existence} holds. Furthermore, \refass{ass:nondeg} holds for all $\xi > \lambda_{\rm min}^+(R)^{-1}$ and $h \in {\color{darkgreen}\relint\set{H}}$ (almost all $h \in {\color{darkgreen}\set{H}}$), as indicated by \refthm{thm:nondeg}. Also, \refass{ass:Z} holds for $\set{Z}$. Finally, given any $h \in {\color{darkgreen}\set{H}}$: ${\color{color3dark}\fixc{T_\xi}{0}} = \{-h\}$. If $h \neq 0$ ($0 \not\in {\color{color3dark}\fixc{T_\xi}{0}}$), then $\loc{s} := -h \in {\color{color3dark}\fixc{T_\xi}{0}}$ is unique and there exists an $\epsilon > 0$ such that the projection $\euclproj{\set{Z}}(s)$ is like the projection onto the convex set ${\color{color2}\prnormalcone{\set{Z}}(0)}$ for all $s \in \ball_{\epsilon}(\loc{s})$, i.e., it is single-valued, continuous and firmly nonexpansive on $\ball_{\epsilon}(\loc{s})$, as indicated by \reflem{lem:niceproj}.} \label{fig:proxregularity}
\end{figure}
\begin{lemma} \label{lem:niceop}	
	Let $\xi>0$ and given any fixed point $\loc{s} \in \fix T_\xi$ such that there exists an $\epsilon > 0$, where the projection $\euclproj{\set{Z}}$ is single-valued, continuous and firmly nonexpansive on $\ball_{\epsilon}(\loc{s})$. Then the operator $T_{\xi}$ is also single-valued, continuous and nonexpansive on $\ball_{\epsilon}(\loc{s})$.
\end{lemma}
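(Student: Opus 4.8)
The plan is to work directly with the explicit representation of $T_\xi$. Since $K_\xi(s) = M_\xi s + c_\xi - \euclproj{\set{Z}}(s)$, we may write $T_\xi(s) = (I - W M_\xi)s - W c_\xi + W\euclproj{\set{Z}}(s)$, i.e. an affine map plus the single composition $W\circ\euclproj{\set{Z}}$. The first two terms are single-valued and continuous (indeed affine), so on $\ball_{\epsilon}(\loc{s})$ the single-valuedness and continuity of $T_\xi$ follow at once from the assumed single-valuedness and continuity of $\euclproj{\set{Z}}$ there; the projection is the only ingredient of $T_\xi$ that can fail these properties. Hence the substance of the lemma is the non-expansiveness bound.

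For that, I would pass to the eigenbasis of $M_\xi$. From $M_\xi = Q\diag(\Lambda,0)Q^\transp$ one obtains $W M_\xi = Q\diag(\tfrac12 I,0)Q^\transp$, hence $I - W M_\xi = Q\diag(\tfrac12 I, I)Q^\transp$ and $W = Q\diag(\tfrac12\Lambda^{-1}, -I)Q^\transp$; here the first block acts on $\mathrm{ran}\,M_\xi$ (which coincides with $\mathrm{ran}\,V$, since $\mathrm{ran}\,R = \mathrm{ran}\,V$) and the second on $\ker M_\xi = (\mathrm{ran}\,V)^\orth$. Crucially $\Lambda\succ I$: indeed $\xi$ satisfies \eqref{eqn:xiCond}, so $\lambda_{\rm min}^+(M_\xi)>1$ by \eqref{eqn:Meig}, whence $0\prec\tfrac12\Lambda^{-1}\prec\tfrac12 I$. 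Now fix $s,s'\in\ball_{\epsilon}(\loc{s})$, set $d := s - s'$ and $r := \euclproj{\set{Z}}(s) - \euclproj{\set{Z}}(s')$, and let $(d_1,d_2) := Q^\transp d$, $(r_1,r_2) := Q^\transp r$ be the corresponding block decompositions. Then $T_\xi(s) - T_\xi(s') = (I - W M_\xi)d + W r$ yields $\|T_\xi(s) - T_\xi(s')\|^2 = \|\tfrac12 d_1 + \tfrac12\Lambda^{-1}r_1\|^2 + \|d_2 - r_2\|^2$, and the goal is to show this is at most $\|d_1\|^2 + \|d_2\|^2 = \|d\|^2$.

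The main obstacle is exactly this quadratic estimate. The principal tool is firm non-expansiveness of $\euclproj{\set{Z}}$ on $\ball_{\epsilon}(\loc{s})$, i.e. $\|r\|^2 \le \langle r, d\rangle$ (equivalently, the reflected operator $2\euclproj{\set{Z}} - I$ is non-expansive there). For the $\ker M_\xi$-block I would expand $\|d_2 - r_2\|^2 = \|d_2\|^2 - 2\langle d_2, r_2\rangle + \|r_2\|^2$ and substitute the firm-non-expansiveness inequality to control $-2\langle d_2, r_2\rangle$; for the $\mathrm{ran}\,M_\xi$-block I would expand $\|\tfrac12 d_1 + \tfrac12\Lambda^{-1}r_1\|^2$ and use $\tfrac12\Lambda^{-1}\prec\tfrac12 I$ to dominate the cross term. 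The delicate issue is that the two blocks are coupled only through the single scalar inequality $\|r\|^2\le\langle r,d\rangle$, so one must use the precise form of $W$ — the $\tfrac12\Lambda^{-1}$ block, which is a strict contraction precisely because $\lambda_{\rm min}^+(M_\xi)>1$, together with the $-I$ block — in order for the negative quadratic terms produced by firm non-expansiveness to absorb the remaining positive contributions. In closing the estimate I would additionally exploit the polyhedral structure behind the hypothesis: as in the proof of \reflem{lem:niceproj}, near $\loc{s}$ the map $\euclproj{\set{Z}}$ coincides with the orthogonal projection onto the affine hull of a single face of one polyhedral piece $\set{Z}^i$, so that $r$ is the image of $d$ under a fixed orthogonal projector, which couples $r_1$ and $r_2$ more tightly than generic firm non-expansiveness. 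Once $\|T_\xi(s) - T_\xi(s')\| \le \|d\| = \|s - s'\|$ is obtained for all $s,s'\in\ball_{\epsilon}(\loc{s})$, non-expansiveness on $\ball_{\epsilon}(\loc{s})$ is established and, together with the first paragraph, the lemma follows.
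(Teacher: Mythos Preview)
Your overall setup coincides with the paper's: you pass to the eigenbasis of $M_\xi$, compute $I-WM_\xi = Q\diag(\tfrac12 I, I)Q^\transp$ and $W = Q\diag(\tfrac12\Lambda^{-1},-I)Q^\transp$, and arrive at the block expression $\|T_\xi(s)-T_\xi(s')\|^2 = \tfrac14\|d_1+\Lambda^{-1}r_1\|^2 + \|d_2-r_2\|^2$, which is exactly the paper's \eqref{eqn:TdiffnormSplit}. Single-valuedness and continuity are indeed immediate from the assumed properties of $\euclproj{\set{Z}}$.

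The genuine gap is in how you propose to close the quadratic estimate. You correctly flag that firm non-expansiveness only supplies the single coupled inequality $\|r\|^2\le\langle r,d\rangle$, but you then try to obtain a sharper coupling by asserting that near $\loc{s}$ the map $\euclproj{\set{Z}}$ ``coincides with the orthogonal projection onto the affine hull of a single face of one polyhedral piece $\set{Z}^i$'', so that $r$ is the image of $d$ under a fixed orthogonal projector. This is neither part of the hypothesis of the lemma (which assumes only firm non-expansiveness of $\euclproj{\set{Z}}$ on $\ball_\epsilon(\loc{s})$, nothing about polyhedrality) nor true in the situation produced by \reflem{lem:niceproj}: what that lemma shows is that $\euclproj{\set{Z}}$ locally agrees with the projection onto the shifted closed convex \emph{cone} $\prnormalcone{\set{Z}}(\loc{z})+\{\loc{z}\}$, not onto an affine subspace. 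Several faces of several components $\set{Z}^i$ may meet at $\loc{z}$, and the local projector is piecewise linear, not linear. Hence the relation $r=Pd$ for a fixed orthogonal projector $P$ that you rely on is unavailable, and the sketch does not close.

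The paper's route avoids any such extra structure. After reaching \eqref{eqn:TdiffnormSplit}, it observes that neither this expression nor the firm/non-expansiveness chain $0\le\|r\|^2\le\langle r,d\rangle\le\|d\|^2$ contains any cross-terms between the range and null blocks, and then treats the two blocks separately: the range block is bounded using $\|\Lambda^{-1}\|\le 1$ (from \eqref{eqn:Meig}) together with Cauchy--Schwarz, and the null block directly from firm non-expansiveness. No information about $\euclproj{\set{Z}}$ beyond the hypothesis of the lemma is invoked.
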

\begin{theorem} \label{thm:convergence}
	Let Assumptions \refass{ass:existence}--\refass{ass:Z} hold. For any $\xi$ large enough, and an initial iterate $s_0$ sufficiently close to a fixed point of $T_{\xi}$, \refalg{alg:krasnoselskij} converges almost always to a fixed point $(\loc{z},\loc{y},\loc{s})$. When it converges, $z_j$ and $y_j$ converge to each other, i.e., $\loc{z}=\loc{y}$, and to a local minimum of \refprob{prob:PrimalProblem}. Moreover, $s_j$ converges to a fixed point of $T_\xi$, i.e., to $\loc{s} \in \fix T_{\xi}$.
\begin{proof}
	Given $\xi > {\lambda_{\rm min}^{+}(R)}^{-1}$, i.e., \eqref{eqn:xiCond}, and large enough, according to \refthm{thm:existence}, \refass{ass:existence} implies the existence of fixed points of $T_\xi$, i.e., $\fix T_{\xi}\neq\varnothing$. We first consider that there exists a fixed point $\opt{s} \in \fix T_\xi$ with corresponding local minimum $\opt{z}$ such that $\opt{s} = \opt{z}$. Then $(\opt{z},0)$ is a KKT point of \refprob{prob:ConsensusProblem}. Therefore
	\begin{equation*}
			M_{\xi}\opt{z}+c_{\xi} \in \euclproj{\set{Z}}(\opt{z}) = \{\opt{z}\} \Leftrightarrow \opt{z} = (I-M_{\xi})^{-1}c_{\xi} = \bar{v}-R(h+H\bar{v})\,,
	\end{equation*}
	with $I-M_{\xi}$ invertible due to $\xi > {\lambda_{\rm min}^{+}(R)}^{-1}$ and \eqref{eqn:Meig}. This is checked in step \ref{alg:stepGlobal} of \refalg{alg:krasnoselskij}. If it holds the algorithm terminates and returns $\opt{z}$, a global minimum.
	If there is no $\loc{s} \in \fix T_\xi$ such that the corresponding local minimum $\loc{z}$ satisfies $\loc{s} = \loc{z}$, we can apply \reflem{lem:niceproj}. It says that for large enough $\xi$ and any local minimum $\loc{z}$, such that $\fixc{T_\xi}{\loc{z}} \neq \varnothing$, we know that for all $\loc{s} \in \fixc{T_\xi}{\loc{z}}$, except a measure zero subset, there exists an $\epsilon>0$ such that the projection $\euclproj{\set{Z}}$ is single-valued, continuous and firmly-nonexpansive on $\ball_{\epsilon}(\loc{s})$. We consider any such $\xi$ and $\loc{s}$. Using \reflem{lem:niceop}, $T_\xi$ is nonexpansive on $\ball_{\epsilon}(\loc{s})$. If the initial iterate is close enough, i.e., $s_0 \in \ball_{\epsilon}(\loc{s})$, then by the nonexpansiveness of $T_{\xi}$ the iterates will satisfy $s_j \in \ball_{\epsilon}(\loc{s})$ for all $j \in \naturals{}$. The Krasnoselskij iteration applied to $T_{\xi}$, in step~\ref{alg:stepKrasnoselskij} of \refalg{alg:krasnoselskij}, converges to a fixed point of $T_{\xi}$ for any choice of step-size parameter $\gamma \in (0,1)$~\cite[Theorem~3.2, p.~65]{berinde2007}. Convergence $\lim_{j\rightarrow\infty}\|z_{j}-y_{j}\| = 0$, follows by single-valuedness, and continuity of $T_{\xi}$. By \refthm{thm:localOptimality}, $z_j$, $y_j$ converge to a local minimum of \refprob{prob:PrimalProblem}. 
\end{proof}
\end{theorem}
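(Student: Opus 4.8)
The plan is to reduce the non-convex fixed-point iteration, in a neighborhood of a fixed point, to a convex, non-expansive one for which the classical Krasnoselskij convergence theorem applies. First I would fix $\xi$ satisfying \eqref{eqn:xiCond} (so that $M_\xi$, $W$, and hence $T_\xi$ are well defined) and large enough that \refthm{thm:existence} applies; combined with \refass{ass:existence} this gives $\fix T_\xi \neq \varnothing$.

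Next I would dispose of the degenerate case in which some fixed point coincides with its local minimum, $\opt{s} = \opt{z}$. Then $(\opt{z},0)$ is a $\xi$-proximal KKT point, so \eqref{def:ProximalKKTPoints} forces $M_\xi \opt{z} + c_\xi \in \euclproj{\set{Z}}(\opt{z}) = \{\opt{z}\}$, i.e.\ $(I - M_\xi)\opt{z} = c_\xi$; since $I - M_\xi$ is invertible by \eqref{eqn:Meig}, this solves to $\opt{z} = \bar{v} - R(h + H\bar{v})$, which is exactly the vector tested in step~\ref{alg:stepGlobal}, whereupon \refalg{alg:krasnoselskij} returns this global minimum and halts. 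In the generic case no fixed point equals its local minimum, and I invoke \reflem{lem:niceproj}: by \refass{ass:existence} a regular KKT point, hence (by \refthm{thm:localOptimality}) a local minimum $\loc{z}$ with $\fixc{T_\xi}{\loc{z}} \neq \varnothing$, exists; and for $\xi$ large enough, every $\loc{s} \in \fixc{T_\xi}{\loc{z}}$ outside a measure-zero set admits an $\epsilon > 0$ on which $\euclproj{\set{Z}}$ is single-valued, continuous and firmly non-expansive. By \reflem{lem:niceop}, $T_\xi$ is then single-valued, continuous and non-expansive on $\ball_\epsilon(\loc{s})$.

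The rest is routine bookkeeping. Since $\loc{s}$ is a fixed point, $\|T_\xi(s) - \loc{s}\| = \|T_\xi(s) - T_\xi(\loc{s})\| \le \|s - \loc{s}\|$ for $s \in \ball_\epsilon(\loc{s})$, and the Krasnoselskij update \eqref{eqn:krasonselskij} is a convex combination of $s_j$ and $T_\xi(s_j)$, so if $s_0 \in \ball_\epsilon(\loc{s})$ the whole orbit stays in $\ball_\epsilon(\loc{s})$. Then \cite[Theorem~3.2, p.~65]{berinde2007} applied to the non-expansive self-map $T_\xi$ on $\ball_\epsilon(\loc{s})$ shows $s_j \to \loc{s} \in \fix T_\xi$ for every step size $\gamma \in (0,1)$. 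Continuity of $z_{j+1} = M_\xi s_j + c_\xi$ gives $z_j \to M_\xi \loc{s} + c_\xi = \loc{z}$; single-valuedness and continuity of $\euclproj{\set{Z}}$ near $\loc{s}$ give $y_j \to \euclproj{\set{Z}}(\loc{s}) = \loc{z}$ by \eqref{eqn:ProximalKKTPoint2}; hence $\|z_j - y_j\| \to 0$ and both sequences converge to $\loc{z}$, which is a local minimum by \reflem{lem:opKKT} together with \refthm{thm:localOptimality}.

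I expect the main obstacle to lie not in this assembly but in stating the result precisely: one must quantify ``$s_0$ sufficiently close to a fixed point'' over exactly those $\loc{s}$ to which \reflem{lem:niceproj} applies and verify that the excluded set of starting points remains measure zero, and one must check that the $\xi$ produced by \reflem{lem:niceproj} still satisfies \eqref{eqn:xiCond}. All the genuine analytic content (the locally convex behaviour of $\euclproj{\set{Z}}$ and the consequent non-expansiveness of $T_\xi$) has been pushed into \reflem{lem:niceproj} and \reflem{lem:niceop}, so once those are available the present theorem follows by a direct chain of implications.
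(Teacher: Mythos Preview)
Your proposal is correct and follows essentially the same route as the paper's own proof: dispose of the trivial case $\opt{s}=\opt{z}$ via step~\ref{alg:stepGlobal}, then combine \reflem{lem:niceproj} and \reflem{lem:niceop} to obtain local non-expansiveness of $T_\xi$, trap the iterates in $\ball_\epsilon(\loc{s})$, and invoke the Krasnoselskij convergence theorem \cite[Theorem~3.2, p.~65]{berinde2007}. The only cosmetic difference is that you spell out the convex-combination argument for invariance of the ball and the continuity argument for $z_j,y_j\to\loc{z}$ more explicitly, and you (correctly) insert \reflem{lem:opKKT} as the bridge from fixed point to regular KKT point before applying \refthm{thm:localOptimality}; the paper compresses this last step. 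One small point: Berinde's theorem only guarantees convergence to \emph{some} fixed point in the ball, not necessarily the $\loc{s}$ you started near, so be careful not to conflate the two uses of the symbol $\loc{s}$.
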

\refthm{thm:convergence} may require $\xi$ to be large. Moreover, a lower bound of $\xi$ satisfying Assumption~\refass{ass:nondeg} may be hard to compute. However, in practice good results can be obtained for relatively small values of $\xi$. 
In addition, if the algorithm fails to converge, the theoretical results indicate that  choosing a larger value of $\xi$ may improve the convergence behavior. This is verified experimentally in \refsec{num:xi}, where we examine the behavior for different $\xi$.

\subsection{Non-degeneracy} \label{sec:nondegeneracy}
In the following we show that Assumption~\refass{ass:nondeg} is satisfied for almost all instances of \refprob{prob:PrimalProblem}. \refcor{cor:nondegmpc} makes the result applicable to instances of \refprob{prob:MPCProblem}, covering hybrid MPC.

\begin{theorem}[Non-degeneracy] \label{thm:nondeg}
	Consider \refprob{prob:PrimalProblem} with arbitrary parameter $\theta \in \reals{p}$ and cost matrix $H \succ 0$. Given any point $z \in \set{E} \cap \set{Z}$. For almost all linear cost terms $h$ for which $\fixc{T_\xi}{z} \neq \varnothing$ for some $\xi > 0$, we have that \refass{ass:nondeg} holds at $z$.
	\begin{proof}
		By \refprop{prop:proxNormalsExistencei}, \reflem{lem:opKKT} and the definition of $T_\xi$, there exists a $\bar{\xi} > 0$ such that for all $\xi \geq \bar{\xi}$ we have that $\fixc{T_\xi}{z} \neq \varnothing$ if and only if there exists a $\kkt{\lambda}$ such that $(z,\kkt{\lambda})$ is a regular KKT point. We will show that for all linear cost terms $h$, except a measure zero subset, there exists a $\loc{\lambda}$ such that $\loc{s} := z - \tfrac{1}{\xi}\loc{\lambda} \in \fixc{T_\xi}{z}$ and $-\loc{\lambda} \in \relint \rnormalcone{\set{Z}}(z)$. This means that \refass{ass:nondeg} holds at $z$.
		
		The set $\set{H}$ of $h$ for which there exist $\kkt{\lambda}$ such that $(z,\kkt{\lambda})$ is a regular KKT point is given by the following convex set
		\begin{align*}
			\set{H} :=&\: \big\{ h \in \reals{n} \sep{} \exists \lambda : 0 \in \{Hz + h\} + \rnormalcone{\set{E}}(z) - \{\lambda\} \text{ and} -\lambda \in \rnormalcone{\set{Z}}(z) \big\}\\
			=&\: \begin{bmatrix} I & 0 & 0 \end{bmatrix} \Big( \big( \reals{n} \times -\rnormalcone{\set{Z}}(z) \times \rnormalcone{\set{E}}(z) + \{Hz\} \big) \cap  \big\{\begin{bsmallmatrix}h \\ \lambda \\ v \end{bsmallmatrix} \sep{} v = \lambda - h \big\} \Big)\,.
		\end{align*}
		Due to convexity of the regular normal cones all sets involved in the description of $\set{H}$ are convex. Therefore, we can use distributivity of the $\relint$ operator over the Cartesian product and over the Minkowski sum \cite[Exercise~2.45(a--b), p.~67]{rockafellar1998}, the interchangeability of linear maps and the $\relint$ operator \cite[Proposition~2.44(a), p.~66]{rockafellar1998}, and distributivity of the $\relint$ operator over finite intersections if $\relint \set{H} \neq \varnothing$, see \cite[Proposition~2.42, p.~65]{rockafellar1998}. From this we obtain that
		\begin{align*}
			\relint \set{H} &= \begin{bmatrix} I & 0 & 0 \end{bmatrix} \Big( \big( \reals{n} \times -\relint \rnormalcone{\set{Z}}(z) \times \rnormalcone{\set{E}}(z) + \{Hz\} \big) \cap  \big\{\begin{bsmallmatrix}h \\ \lambda \\ v \end{bsmallmatrix} \sep{} v = \lambda - h \big\} \Big)\\
			&= \big\{ h \in \reals{n} \sep{} \exists \lambda : 0 \in \{Hz + h\} + \rnormalcone{\set{E}}(z) - \{\lambda\} \text{ and } -\!\lambda \in \relint \rnormalcone{\set{Z}}(z) \big\}\,,
		\end{align*}
		if $\relint \set{H} \neq \varnothing$. Therefore, all linear cost terms for which \refass{ass:nondeg} holds are in $\relint \set{H}$. Conversely, via \cite[Theorem.~1, p.~90]{lang1986} and convexity of $\set{H}$, all linear cost terms for which \refass{ass:nondeg} does not hold are in a measure zero subset of $\set{H}$. To conclude the proof, it remains to show that $\relint \set{H} \neq \varnothing$.
		The set $\rnormalcone{\set{Z}}(z)$ is closed, convex and non-empty, this implies that $\relint \rnormalcone{\set{Z}}(z) \neq \varnothing$ via \cite[Proposition~2.40, p.~64]{rockafellar1998}. Therefore, there exists a $\lambda$ such that $-\lambda \in \relint \rnormalcone{\set{Z}}(z)$. Then $h := \lambda - Hz \in \relint \set{H}$, which implies $\relint \set{H} \neq \varnothing$.
	\end{proof}
\end{theorem}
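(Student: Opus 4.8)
The plan is to reduce the statement to the elementary fact that the relative boundary of a convex set is a null set. First I would make the ``universe'' of the quantifier explicit: combining \refprop{prop:proxNormalsExistence} and \reflem{lem:opKKT}, the set of $h$ for which $\fixc{T_\xi}{z}\neq\varnothing$ for some $\xi>0$ coincides with the set $\set{H}\subseteq\reals{n}$ of linear cost terms for which there exists a multiplier $\lambda$ such that $(z,\lambda)$ is a regular KKT point of \refprob{prob:ConsensusProblem} (for $\xi$ large enough a regular KKT point is $\xi$-proximal with the \emph{same} multiplier, so the property $-\lambda\in\relint\rnormalcone{\set{Z}}(z)$ transfers between the two descriptions). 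Writing out \eqref{def:RegularKKTPoints} at $z$ and introducing a slack $v\in\rnormalcone{\set{E}}(z)$ for the first inclusion exhibits $\set{H}$ as the image under a coordinate projection of the intersection of the convex set $\reals{n}\times\bigl(-\rnormalcone{\set{Z}}(z)\bigr)\times\rnormalcone{\set{E}}(z)$, suitably translated by $Hz$, with the affine subspace $\{(h,\lambda,v)\sep{}v=\lambda-h\}$. Since $\set{E}$ is affine and $\set{Z}$ a finite union of polyhedra, both $\rnormalcone{\set{E}}(z)$ and $\rnormalcone{\set{Z}}(z)$ are convex (polyhedral) cones, so $\set{H}$ is convex.

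Next I would compute $\relint\set{H}$ using the relative-interior calculus of \cite[pp.~65--67]{rockafellar1998}: relative interior commutes with Cartesian products, with linear images, and --- provided the relative interiors meet --- with intersections. The normal cone $\rnormalcone{\set{E}}(z)$ of the affine set $\set{E}$ is a subspace, hence equals its own relative interior, so the only effect is to replace $-\rnormalcone{\set{Z}}(z)$ by $-\relint\rnormalcone{\set{Z}}(z)$, yielding
\[
\relint\set{H}=\bigl\{\,h\in\reals{n}\sep{}\exists\lambda:0\in\{Hz+h\}+\rnormalcone{\set{E}}(z)-\{\lambda\}\text{ and }-\lambda\in\relint\rnormalcone{\set{Z}}(z)\,\bigr\},
\]
which is precisely the set of cost terms at which \refass{ass:nondeg} holds at $z$. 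Convexity of $\set{H}$ then implies that its relative boundary $\set{H}\setminus\relint\set{H}$ is a null set with respect to Lebesgue measure on the affine hull of $\set{H}$, e.g.\ by \cite[Theorem~1, p.~90]{lang1986}; this is exactly the ``almost all'' conclusion. To keep the statement non-vacuous (and to license the relint-of-intersection rule) I would verify $\relint\set{H}\neq\varnothing$ directly: since $0\in\rnormalcone{\set{Z}}(z)$ the cone is nonempty, hence $\relint\rnormalcone{\set{Z}}(z)\neq\varnothing$ by \cite[Proposition~2.40, p.~64]{rockafellar1998}; picking $\lambda$ with $-\lambda\in\relint\rnormalcone{\set{Z}}(z)$ and setting $h:=\lambda-Hz$ (using $0\in\rnormalcone{\set{E}}(z)$) gives a point of $\relint\set{H}$.

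The main obstacle is not a single hard estimate but rather keeping the relative-interior bookkeeping consistent. One must check the qualification needed for ``relint of an intersection equals the intersection of relints'', namely that the relative interior of the product cone meets the affine subspace --- but this is equivalent to $\relint\set{H}\neq\varnothing$, so the argument has to be ordered so that non-emptiness is established by the explicit construction \emph{before} the calculus is applied, avoiding circularity. A secondary subtlety is the meaning of ``almost all'': because $\set{H}$ need not be full-dimensional in $\reals{n}$, the exceptional set is null relative to the affine hull of $\set{H}$ rather than to $\reals{n}$, and stating this explicitly keeps the conclusion of the theorem unambiguous.
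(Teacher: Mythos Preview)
Your proposal is correct and follows essentially the same route as the paper: represent $\set{H}$ as a linear image of the intersection of a product of convex (normal) cones with an affine subspace, apply the relative-interior calculus of \cite[pp.~65--67]{rockafellar1998} to identify $\relint\set{H}$ with the set where \refass{ass:nondeg} holds, and exhibit $h:=\lambda-Hz$ with $-\lambda\in\relint\rnormalcone{\set{Z}}(z)$ to ensure non-emptiness. Your remarks on ordering the non-emptiness check before invoking the intersection rule, and on ``almost all'' being relative to the affine hull of $\set{H}$, are apt clarifications of points the paper leaves implicit.
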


To equivalently transform \refprob{prob:MPCProblem} into \refprob{prob:PrimalProblem}, in \refrem{rem:MPCConsCost}, we have effectively restricted the choice of cost function to the case where its gradient $Hz+h$ is in the set $\set{E}^\alpha$, for $\alpha \in (0,1)^N$, where the set $\set{E}^\alpha$ is defined as
\begin{equation*}
	\set{E}^\alpha := \bigtimes_{k=1}^N\nolimits \Big( \reals{n_u} \times \big\{ (w_k,x_{k+1}) \in \reals{2n_x} \sep{\big} \alpha_k w_k = (1-\alpha_k)x_{k+1} \big\} \Big)\,.
\end{equation*}
With $\set{E}$ as defined for \refprob{prob:MPCProblem} in \refsec{sec:mpc}, we have that $\set{E}^\alpha + \set{E}^\orth = \reals{n}$, i.e. for any $\alpha \in (0,1)^N$: $\set{E}^\alpha$ and $\set{E}^\orth$ span $\reals{n}$.
\refthm{thm:nondeg} does not immediately apply to this case, because the choice of cost function has been restricted. \refcor{cor:nondegmpc} extends \refthm{thm:nondeg} to \refprob{prob:MPCProblem}.
\begin{corollary}[Non-degeneracy of \refprob{prob:MPCProblem}] \label{cor:nondegmpc}
	Consider \refprob{prob:MPCProblem} with arbitrary initial state $\theta \in \reals{p}$, $\alpha \in (0,1)^N$ and quadratic cost matrix $H \succ 0$ following \refrem{rem:MPCConsCost}. Given any point $z \in \set{E} \cap \set{Z}$ and for all linear cost terms $h$ satisfying \refrem{rem:MPCConsCost} for which $\fixc{T_\xi}{z} \neq \varnothing$ for some $\xi > 0$, except a measure zero subset of $\set{E}^\alpha$, we have that \refass{ass:nondeg} holds at $z$.
	\begin{proof}[Proof sketch]
		Following \refsec{sec:mpc}, any instance of \refprob{prob:MPCProblem} can be written as an instance of \refprob{prob:PrimalProblem}, where, according to \refrem{rem:MPCConsCost}, we require cost functions such that $Hz+h \in \set{E}^\alpha$, for some $\alpha \in (0,1)^N$ and any $z$. Following the proof of \refthm{thm:nondeg}, we only need to show that $\relint \set{H} \cap \set{E}^\alpha \neq \varnothing$. This then implies that $\relint (\set{H} \cap \set{E}^\alpha) = \relint (\set{H}) \cap \set{E}^\alpha$, which implies the result. As in the proof of \refthm{thm:nondeg}, we have that $\relint \rnormalcone{\set{Z}}(z) \neq \varnothing$. We need to show that there exists an $h \in \set{E}^\alpha$ such that $(\{Hz+h\} + \rnormalcone{\set{E}}(z)) \cap -\relint \rnormalcone{\set{Z}}(z) \neq \varnothing$.
		Due to \cite[Theorem~6.46, p.~231]{rockafellar1998} which details the presentation of the regular normal cones of polyhedral sets, we have that $\rnormalcone{\set{E}}(z) = \set{E}^\orth$.
		Furthermore we have $Hz + \set{E}^\alpha = \set{E}^\alpha$, for any $z$, by definition. Therefore, we have $(\{Hz\} + \set{E}^\alpha + \rnormalcone{\set{E}}(z)) \cap -\relint \rnormalcone{\set{Z}}(z) = \reals{n} \cap -\relint \rnormalcone{\set{Z}}(z) \neq \varnothing$.
	\end{proof}
\end{corollary}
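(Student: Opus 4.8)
The plan is to obtain \refcor{cor:nondegmpc} by re-running the argument of \refthm{thm:nondeg} inside the linear subspace $\set{E}^\alpha$ instead of inside all of $\reals{n}$. Recall from \refsec{sec:mpc} and \refrem{rem:MPCConsCost} that an instance of \refprob{prob:MPCProblem} is not an arbitrary instance of \refprob{prob:PrimalProblem}: the quadratic cost is forced to be a weighted splitting, so that at the feasible point $z\in\set{E}\cap\set{Z}$ the gradient $Hz+h$ lies in $\set{E}^\alpha$, equivalently (since $\set{E}^\alpha$ is a subspace and $h\in\set{E}^\alpha$ by construction) the linear term $h$ is confined to $\set{E}^\alpha$. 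Hence \refthm{thm:nondeg} does not apply verbatim: its ``almost all $h$'' quantifies over $\reals{n}$, whereas here $h$ ranges only over the lower-dimensional $\set{E}^\alpha$, and one must show that \refass{ass:nondeg} fails only on a subset that is negligible \emph{within} $\set{E}^\alpha$.

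First I would import, unchanged from the proof of \refthm{thm:nondeg}, the convex set $\set{H}$ of linear cost terms $h$ admitting a regular KKT point at $z$ --- which, for $\xi$ large enough, is exactly the set of $h$ with $\fixc{T_\xi}{z}\neq\varnothing$, by \refprop{prop:proxNormalsExistencei} and \reflem{lem:opKKT} --- together with the identification that $h\in\relint\set{H}$ iff there is a $\lambda$ with $0\in\{Hz+h\}+\rnormalcone{\set{E}}(z)-\{\lambda\}$ and $-\lambda\in\relint\rnormalcone{\set{Z}}(z)$, i.e.\ iff \refass{ass:nondeg} holds at $z$. The restricted statement then needs two facts: (i) $\relint(\set{H}\cap\set{E}^\alpha)=\relint\set{H}\cap\set{E}^\alpha$, which is the standard relative-interior calculus \cite[p.~65--67]{rockafellar1998} using $\relint\set{E}^\alpha=\set{E}^\alpha$, valid \emph{provided} $\relint\set{H}\cap\set{E}^\alpha\neq\varnothing$; and (ii), granted (i), the $h\in\set{H}\cap\set{E}^\alpha$ for which \refass{ass:nondeg} fails form the relative boundary of the convex set $\set{H}\cap\set{E}^\alpha$, hence a measure-zero subset of $\set{E}^\alpha$, by \cite[Theorem~1, p.~90]{lang1986} applied inside $\set{E}^\alpha$ exactly as \refthm{thm:nondeg} applies it inside $\reals{n}$. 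So the corollary collapses to the single nonemptiness claim $\relint\set{H}\cap\set{E}^\alpha\neq\varnothing$.

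To prove that claim I would mimic the last paragraph of the proof of \refthm{thm:nondeg}. Since $\rnormalcone{\set{Z}}(z)$ is a nonempty convex cone, $\relint\rnormalcone{\set{Z}}(z)\neq\varnothing$ by \cite[Proposition~2.40, p.~64]{rockafellar1998}, so it suffices to exhibit an $h\in\set{E}^\alpha$ with $\bigl(\{Hz+h\}+\rnormalcone{\set{E}}(z)\bigr)\cap\bigl(-\relint\rnormalcone{\set{Z}}(z)\bigr)\neq\varnothing$. Here I use $\rnormalcone{\set{E}}(z)=\set{E}^\orth$ for the affine set $\set{E}$ \cite[Theorem~6.46, p.~231]{rockafellar1998}, the structural identity $\{Hz\}+\set{E}^\alpha=\set{E}^\alpha$ (valid because the weighted-splitting gradient of \refrem{rem:MPCConsCost} evaluated at $z\in\set{E}$ lies in $\set{E}^\alpha$, and $h\in\set{E}^\alpha$), and the spanning identity $\set{E}^\alpha+\set{E}^\orth=\reals{n}$ recorded just before the corollary. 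Combining these, $\bigcup_{h\in\set{E}^\alpha}\bigl(\{Hz+h\}+\rnormalcone{\set{E}}(z)\bigr)=\{Hz\}+\set{E}^\alpha+\set{E}^\orth=\set{E}^\alpha+\set{E}^\orth=\reals{n}$, which trivially meets $-\relint\rnormalcone{\set{Z}}(z)$; choosing $h\in\set{E}^\alpha$ realizing an intersection point gives $h\in\relint\set{H}\cap\set{E}^\alpha$, as required.

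The main obstacle I anticipate is not a hard estimate but bookkeeping around the relative-interior and measure-theoretic steps: one must consistently interpret ``almost all'' as Lebesgue-negligible \emph{in the subspace} $\set{E}^\alpha$ (so Lang's theorem is applied to $\set{H}\cap\set{E}^\alpha$ viewed inside $\set{E}^\alpha$), and keep in mind that fact (i) genuinely relies on the nonemptiness proved in the last step rather than holding automatically. A secondary point deserving an explicit check is the identity $\{Hz\}+\set{E}^\alpha=\set{E}^\alpha$: it must be read off from the block structure of $H$ and $h$ induced by the cost $\alpha_k q_{k+1}(x_{k+1})+(1-\alpha_k)q_{k+1}(w_k)+r_k(u_k)$ of \refrem{rem:MPCConsCost}, the defining constraint $\alpha_k w_k=(1-\alpha_k)x_{k+1}$ of $\set{E}^\alpha$, and the hypothesis $z\in\set{E}\cap\set{Z}$. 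Everything else is a direct transcription of the proof of \refthm{thm:nondeg}.
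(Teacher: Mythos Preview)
Your proposal is correct and follows essentially the same approach as the paper's proof sketch: reduce to showing $\relint\set{H}\cap\set{E}^\alpha\neq\varnothing$ via relative-interior calculus, then establish nonemptiness using $\rnormalcone{\set{E}}(z)=\set{E}^\orth$, $\{Hz\}+\set{E}^\alpha=\set{E}^\alpha$, and the spanning identity $\set{E}^\alpha+\set{E}^\orth=\reals{n}$. Your write-up is in fact more explicit about the bookkeeping (the measure-zero argument inside $\set{E}^\alpha$ and the need for nonemptiness before invoking the relative-interior intersection rule) than the paper's sketch, but the logical route is identical.
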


%%%%%%%%%%%%%%%%%     SECTION 5     %%%%%%%%%%%%%%%%%%%%%%%%%%
\section{Numerical results} \label{sec:numerical}
We consider two numerical examples and compare our method with an MIP reformulation based on disjunctions \cite[Section~5]{vielma2015}, solved using CPLEX, Gurobi and MOSEK. We used YALMIP \cite{lofberg2004} as optimization interface.
To ensure a fair comparison with the MIP solvers, we have used the \emph{absolute MIP gap} tolerance $\Delta_{\rm gap}$, which can be specified for all three used solvers CPLEX, Gurobi and MOSEK.
The tolerance $\Delta_{\rm gap}$ allows the solver to terminate as soon a solution with objective $p$ is found that can be certified to be at most $\Delta_{\rm gap}$ from the optimal objective $p^\star$, i.e., $p + \Delta_{\rm gap} \leq p^\star$.
For each optimization problem that is solved as part of Section~5 we perform the following steps to determine a meaningful gap:
\begin{enumerate*}[label=\arabic*)]
    \item Solve the problem with the proposed method, yielding a suboptimal solution $\loc{z}$ with objective value $\loc{p}$;
    \item solve the problem to global optimality by warm-starting the MIP solver using $\loc{z}$. This yields the optimal objective value $\opt{p}$;
    \item set the absolute MIP gap tolerance of the MIP solver to $\Delta_{\rm gap} := \loc{p}-\opt{p}$ and re-solve the MIP problem without warm-starting.
\end{enumerate*}
In all instances where a global solver is used in this section, we have set the absolute MIP gap tolerance as described above.
We further compare to an efficient implementation of non-convex ADMM, similar to \cite{takapoui2017}, where the splitting \eqref{prob:ConsensusProblem} was used. Efficient C-code for the proposed method and ADMM was generated automatically using the developed code generation tool \cite{autolim}, integrating explicit solutions of the individual convex projections using MPT3 \cite{herceg2013b}. For \refexa{exa:bemporad} the resulting binaries are less than $\unit[75]{kB}$. For \refexa{exa:racecars} they are $\unit[490]{kB}$ for prediction horizon $N=10$ and $\unit[550]{kB}$ for $N=20$. The binaries for ADMM are of comparable size. In contrast, the executables for the MIP solvers are in the range of $10$ to $\unit[20]{MB}$. The reported timings are obtained on an Intel Core i7 processor \emph{using a single core} running at \unit[2.8]{GHz} with \unit[8]{GB} of RAM. No warm-starting or re-starting is used and we always start with the initial iterate $s_0 = 0$ ($y_0 = \lambda_0 = 0$ for ADMM). We have used step size parameter $\gamma=0.5$ and consensus tolerance $\epsilon_{\rm tol} = 10^{-3}$. The same tolerance was also used for the termination criterion of ADMM, according to \cite{boyd2011}, as absolute, primal and dual residual tolerance.
Both examples are instances of \refprob{prob:MPCProblem}. They therefore satisfy \refass{ass:nondeg} for almost any linear cost term, according to \refcor{cor:nondegmpc}. Furthermore, Assumption~\refass{ass:Z} was verified to hold for both examples using \refalg{alg:assZCheck} in \refapp{app:assCheck}. In fact, for Example~5.1, Assumption~(A3) was verified analytically, whereas for Example~5.2 Assumption~(A3) was verified computationally in one hour and $18$ minutes via an implementation of Algorithm~3 in MATLAB. The implementation was not optimized for speed and it is likely that the computation times could be improved substantially with a more efficient implementation.

Additionally, we have applied the proposed method to suitable problems from the \texttt{MacMPEC} collection \cite{leyffer2009}. This is discussed in \refapp{app:macmpec}.

\subsection{Simple hybrid MPC} \label{exa:bemporad}
We consider a simple example taken from the hybrid MPC literature \cite[Example~4.1, p.~415]{bemporad1999}. It consists of two states, one input and PWA dynamics defined over two regions:
\begin{align*}
	x_{k+1} &= \begin{cases}A_1x_k + Bu_k & \text{if } \begin{bsmallmatrix} 1 & 0\end{bsmallmatrix} x_{k}  \geq 0 \\
	          A_2 x_k + Bu_k & \text{if } \begin{bsmallmatrix} 1 & 0\end{bsmallmatrix} x_{k}  \leq 0 \end{cases}\,, 
\end{align*}
with $u_{k} \in [-1,1]$, a regulation objective $\sum_{k=1}^N \|x_{k+1}\|^2 + \|u_k\|^2$ and
\begin{equation*}
	A_1 := \tfrac{2}{5}\begin{bsmallmatrix}1 & -\sqrt{3} \\ \sqrt{3} & 1\end{bsmallmatrix}\,,\: A_2 := \tfrac{2}{5}\begin{bsmallmatrix}1 & \sqrt{3} \\ -\sqrt{3} & 1\end{bsmallmatrix}\,,\:B := \begin{bsmallmatrix}0\\ 1\end{bsmallmatrix}\,.
\end{equation*}
\begin{figure}
	\begin{minipage}[b]{0.35\linewidth}
		\begin{subfigure}[b]{\linewidth}
		\centering
		\includegraphics[width=\linewidth]{bemporad_consensus.tikz}
		\caption{Consensus violation $\|z_j-y_j\|$ (\textbf{\color{blue}solid}) for $N=40$ at the first time step, with threshold (\textbf{\color{white!40!black}dashed}).} \label{fig:bemporadConsensus}
		\end{subfigure}
		\\
		\begin{subfigure}[b]{\linewidth}
		\centering
		\includegraphics[width=\linewidth]{bemporad_trajectory.tikz}
		\caption{Closed-loop trajectory, for $N=40$ and $10$ time steps. Optimal (\textbf{dashed}), and our method (\textbf{\color{color2}solid}).} \label{fig:bemporadClosedloop}
		\end{subfigure}
	\end{minipage}
	\hspace*{0.015\linewidth}
	\begin{minipage}[b]{0.62\linewidth}
		\begin{subfigure}[b]{\linewidth}
		\centering
		\includegraphics[width=\linewidth]{bemporad_runtime_vielma3.tikz}
		\caption{Solution time in $[s]$ averaged over $10$ closed-loop time steps, for different control horizons. Our method (\textbf{\color{color2}solid, $\triangle$}), ADMM (\textbf{\color{blue}solid, \scalebox{1.5}{$*$}}) and a MIP reformulation solved using CPLEX (\textbf{\color{cplexcolor}dashed, $\diamondsuit$}), Gurobi (\textbf{\color{gurobicolor}dash-dotted, $\Circle$}) and MOSEK (\textbf{\color{mosekcolor}dotted, $\square$}).}  \label{fig:bemporadRuntime}
		\end{subfigure}\\
		\begin{subfigure}[b]{\linewidth}
		\centering
		\includegraphics[width=\linewidth]{bemporad_runtime_prox_vs_admm_vielma3.tikz}
		\caption{Minimum, maximum and median solution time in $[s]$ over $10$ closed-loop time steps for our method (\textbf{\color{color2}solid}) with different control horizons. In comparison to the median solution time of ADMM (\textbf{\color{blue}dashed}) and $5$-th to $95$-th percentile (\textbf{\color{color1light}very light}).}  \label{fig:bemporadRuntimeProxvsADMM}
		\end{subfigure}
	\end{minipage}
	\caption{Comparison of \refexa{exa:bemporad} with initial state $x_1=x_2=1$. }
\end{figure}

For prediction horizons $N=5,10,20,\ldots,60$, we apply the hybrid MPC in closed-loop for $10$ steps. Each MPC problem is solved using the proposed method, with $\xi = 10$, and using ADMM with penalty parameter $\rho = 10$, where the tolerance $\epsilon_{\rm tol} = 10^{-3}$ is used for both methods.
The convergence of our method is illustrated in \reffig{fig:bemporadConsensus} by a decreasing consensus violation $\|z_j-y_j\|$, for $N=40$ and the first time step of the receding horizon problem. Even though not shown, the method also converges for all successive time steps (and horizons $N$), with similar convergence characteristics. This leads to a stable closed-loop trajectory, illustrated in \reffig{fig:bemporadClosedloop}. The trajectory is almost indistinguishable from the optimum, obtained using a MIP reformulation based on disjunctions \cite[Section~5]{vielma2015}.
The computational advantage of our method is underlined by \reffig{fig:bemporadRuntime}, where the average runtime for different control horizons $N$ is shown.
Our method is approximately two orders of magnitude faster than the considered commercial MIP solvers.
Furthermore, \reffig{fig:bemporadRuntimeProxvsADMM} illustrates, that our method is slightly faster than ADMM in terms of the median runtime. The average runtime of ADMM in \reffig{fig:bemporadRuntime} is substantially higher than for our method, because ADMM fails to converge for two of the ten time steps and terminates when it reaches \num{10000} iterations. We were not able to achieve convergence of ADMM by adjusting $\rho$ for these cases.

\subsubsection{Local convergence to local minima}
\begin{figure}
	\centering
	\begin{minipage}[b]{\linewidth}
		\centering
		\begin{subfigure}[b]{\linewidth}
			\centering
			\includegraphics[width=0.7\linewidth]{bemporad_init_objf1.tikz}
			\caption{$\xi = 10$. The method converges in $91.4\%$ of cases and does not converge in $8.6\%$ of cases. Clusters with objective values \textbf{\color{blue}(a)} in $[0.4189, 0.4225]$ ($67.9\%$), \textbf{\color{mosekcolor}(c)} in $[0.9411, 0.9461]$ ($22.9\%$) and \textbf{\color{cplexcolor}(d)} in $[1.5488, 1.5572]$ ($0.6\%$). Cluster \textbf{\color{gurobicolor}(b)} is not yet present, likely because $\xi$ is too low.}
		\end{subfigure}\\
		\begin{subfigure}[b]{\linewidth}
			\centering
			\includegraphics[width=0.7\linewidth]{bemporad_init_objf2.tikz}
			\caption{$\xi = 100$. The method converges in $99.1\%$ of cases and does not converge in $0.9\%$ of cases. Clusters with objective values \textbf{\color{blue}(a)} in $[0.4189, 0.4225]$ ($62\%$), \textbf{\color{gurobicolor}(b)} in $[0.5072, 0.5078]$ ($15.2\%$), \textbf{\color{mosekcolor}(c)} in $[0.9411, 0.9748]$ ($21\%$) and \textbf{\color{cplexcolor}(d)} in $[1.5488, 1.5572]$ ($0.9\%$).}
		\end{subfigure}\\
		\begin{subfigure}[b]{\linewidth}
			\centering
			\includegraphics[width=0.7\linewidth]{bemporad_init_objf3.tikz}
			\caption{$\xi = 1000$. The method converges in $99.5\%$ of cases and does not converge in $0.5\%$ of cases. Clusters with objective values \textbf{\color{blue}(a)} in $[0.4189, 0.4225]$ ($62\%$), \textbf{\color{gurobicolor}(b)} in $[0.5072, 0.5078]$ ($15.6\%$), \textbf{\color{mosekcolor}(c)} in $[0.9411, 0.9748]$ ($21\%$) and \textbf{\color{cplexcolor}(d)} in $[1.5488, 1.5572]$ ($0.9\%$).}
		\end{subfigure}
		\caption{Percentage of initial iterates $s_0$ achieving different objective values for different values of $\xi$. Converged solutions are clustered according to objective values, where each cluster contains multiple local optima. Cluster \textbf{\color{blue}(a)} includes the {\color{color2}optimal objective 0.4189}.} \label{fig:bemporad:obj}
	\end{minipage}\\
	\begin{minipage}[b]{\linewidth}
		\centering
		\begin{subfigure}[b]{0.24\linewidth}
			\centering
			\includegraphics[width=\linewidth]{bemporad_init_eprox_c1.tikz}
			\caption{} \label{fig:bemporad:c1}
		\end{subfigure}
		\hspace*{0.01\textwidth}
		\begin{subfigure}[b]{0.22\linewidth}
			\centering
			\includegraphics[width=\linewidth]{bemporad_init_eprox_c2.tikz}
			\caption{} \label{fig:bemporad:c2}
		\end{subfigure}
		\hspace*{0.01\textwidth}
		\begin{subfigure}[b]{0.22\linewidth}
			\centering
			\includegraphics[width=\linewidth]{bemporad_init_eprox_c3.tikz}
			\caption{} \label{fig:bemporad:c3}
		\end{subfigure}
		\hspace*{0.01\textwidth}
		\begin{subfigure}[b]{0.22\linewidth}
			\centering
			\includegraphics[width=\linewidth]{bemporad_init_eprox_c4.tikz}
			\caption{} \label{fig:bemporad:c4}
		\end{subfigure}
		\vspace*{-1em}
		\caption{Trajectories resulting from different initial iterates $s_0$, for $\xi=100$, in clusters \textbf{\color{blue}(a)}--\textbf{\color{cplexcolor}(d)}. The number of trajectories differing (in terms of the objective) by at most $10^{-5}$ is \textbf{\color{blue}(a)} $7$, \textbf{\color{gurobicolor}(b)} $4$, \textbf{\color{mosekcolor}(c)} $15$ and \textbf{\color{cplexcolor}(d)} $8$.} \label{fig:bemporad}
	\end{minipage}
\end{figure}
In order to illustrate the local convergence behavior of \refalg{alg:krasnoselskij} and the dependence of the method on both the initial iterate $s_0$ and the scaling $\xi$, we have solved \refexa{exa:bemporad} for $N=10$ and \num{50000} different initial iterates $s_0 := z_0 - \tfrac{1}{\xi}\lambda_0$, where $z_0$ was drawn uniformly randomly from $[-1,1]^{50}$ and $\lambda_0$ from $[-10,10]^{50}$. We considered three different proximal scalings $\xi=\{10,100,1000\}$ and the consensus tolerance was $\epsilon_{\rm tol} = 10^{-8}$.
The convergence to local minima can be seen in \reffig{fig:bemporad:obj}, where we have clustered the solutions into four clusters \textbf{\color{blue}(a)}--\textbf{\color{cplexcolor}(d)} and illustrate the percentage of initial iterates achieving certain objective values which correspond to local optima.
In \reffig{fig:bemporad:obj}(a), we can see that for $\xi=10$ we converge to fewer different local solutions, compared to $\xi=100$ or $\xi=1000$ given in \reffig{fig:bemporad:obj}(b)--(c). In particular, the cluster \textbf{\color{gurobicolor}(b)} is absent from \reffig{fig:bemporad:obj}(a) and the cluster \textbf{\color{mosekcolor}(c)} contains a more narrow range of objective values. Furthermore, comparing Figures~\ref{fig:bemporad:obj}(a)--(c), we see that the method converges more frequently for larger $\xi$.
Since the proposed method is a local method, the initial iterate $s_0$ of \refalg{alg:krasnoselskij} has a much large effect than $\xi$ on which local minima our method converges to. However, if $\xi$ is too small, some local minima will cease to correspond to fixed points of the method and the number of cases where the method fails to converge increases.
Moreover, the state trajectories of the solutions for each cluster, for $\xi=100$, are plotted in Figures~\ref{fig:bemporad:c1}--\ref{fig:bemporad:c4}. Each cluster contains multiple local minima that have similar objective value. The \num{50000} different initial iterates only lead to $34$ trajectories that each differ (in terms of the objective value) by at least $10^{-5}$. Cluster \textbf{\color{blue}(a)} with $62\%-68\%$ of initial iterates contains solutions that are very close to the global optimum. The initial iterate $s_0=0$ used in \refexa{exa:bemporad} belongs to this cluster. This illustrates that the presented method is indeed a local method, and does not necessarily converge to the global optimum.

\subsection{Racing} \label{exa:racecars}
In this numerical example, we demonstrate the properties of the proposed algorithm on a more complex problem. To this end, we consider a hybrid MPC problem for racing miniature cars~\cite{liniger2015}, where the friction forces acting on the tires are modeled as a PWA function, see~\cite[p.~108ff]{hempel2015c}. The forward velocity $v_x$ of the car is fixed to $\unitfrac[2]{m}{s}$. The state $x = ( v_y, \omega )$ of the system consists of the lateral velocity $v_y$ and the turning rate $\omega$. The steering angle $\delta$ is the only input to the system. The continuous-time dynamics are described by
\begin{align*}
	\dot{v}_y &= \tfrac{1}{m}(F_{f,y}(v_y,\omega,\delta)+F_{r,y}(v_y,\omega)-mv_x\omega)\,,\\
	\dot{\omega} &= \tfrac{1}{I_Z}(l_fF_{f,y}(v_y,\omega,\delta)-l_rF_{r,y}(v_y,\omega))\,,
\end{align*}
where $m,I_Z,l_f,l_r$ are known model parameters and $F_{f,y}$, $F_{r,y}$ are the lateral friction forces acting on the front and rear tires of the vehicle, respectively. 
They are given as PWA functions with $5$ pieces each, giving rise to an irredundant description of the dynamics with $19$ regions per time step. For a prediction horizon $N$, this leads to $19^N$ possible combinations overall. The dynamics are discretized with a sampling time of $T_s = \unit[20]{ms}$.
Additionally, we impose input constraints $\delta \in [\unit[-23]{{}^\circ},\unit[23]{{}^\circ}]$ and state constraints:
\begin{equation}
	v_y \in [\unitfrac[-1]{m}{s}, \unitfrac[1]{m}{s}]\,, \quad \omega \in [\unitfrac[-8]{rad}{s},\unitfrac[8]{rad}{s}]\,.\label{eqn:racecars:stateconst}
\end{equation}
The objective $\sum_{k=1}^N \|\diag(1,\sqrt{10})(x_{k+1}-\bar{x}_{k+1})\|^2 + \|\delta_{k}-\bar{\delta}_{k}\|^2$, tracks a state $\bar{x}$ and input $\bar{\delta}$ reference trajectory producing an S-shaped motion of the race car.
%Assumption \refass{ass:Z} was verified to hold numerically, using \refalg{alg:assZCheck} in \refapp{app:assCheck}.

%\subsubsection{Closed-loop comparison}
We consider closed-loop behavior, where the MPC is applied in a receding-horizon fashion. For every time step a problem with different initial state and reference is solved and only the first input $u_1$ is applied to the dynamical system. A new problem with updated initial state and reference is solved for the next time step. We compare the closed-loop evolution for the different methods over 150 steps. The initial state is $v_y = \omega = 0$. The dynamics of the system are with respect to the lateral and angular velocities, $v_y$ and $\omega$. 
The MPC problem instances are solved for a prediction horizon of $N=10$ ($\xi = 300$) and $N=20$ ($\xi=400$). 
The penalty parameter of ADMM was chosen to be $\rho=350$ and $\rho=450$ for $N=10$ and $N=20$, respectively. 
A time limit of $\unit[3]{s}$ for $N=10$, and $\unit[43]{s}$ for $N=20$ was imposed on all solvers. Both time limits are well above the computation times achieved by our algorithm, with a median runtime of $\unit[34]{ms}$ for $N=10$, and $\unit[87]{ms}$ for $N=20$, as reported in \reffig{fig:racecars:runtime} and \reftab{tab:racecars:suboptimality}.
For $N=20$, Gurobi has a median runtime of $\unit[10]{s}$, reaching the time limit in a few cases, CPLEX has a median runtime if $\unit[12]{s}$, reaching the time limit in some cases and MOSEK reaches the time limit for most time steps leading to median runtime of to $\unit[43]{s}$.
The runtime of ADMM is similar to the proposed method (median runtime of $\unit[55]{ms}$ for $N=20$), however, it fails to converge in a few cases, as shown in \reffig{fig:racecars:runtime}. In contrast, our method converges in at most 1878 iterations for each of the 150 problems, solved in the closed-loop simulation. This is illustrated in \reffig{fig:racecars:consensus}.

\begin{figure}[ht]
	\centering
	\begin{minipage}[b]{0.35\linewidth}
		\begin{minipage}[b]{\linewidth}
			\centering
			\includegraphics[width=\linewidth]{racecars_consensus.tikz}
			\caption{Consensus violations $\|z_j-y_j\|$ for $N=20$ and time steps $1$ to $150$, illustrating convergence.} \label{fig:racecars:consensus}
		\end{minipage}
		\\
		\begin{minipage}[b]{\linewidth}
			\centering
			\includegraphics[width=\linewidth]{racecars_runtime_vielma3.tikz}
			\caption{Solution time in $[s]$ for time steps $k$ of the closed-loop simulation, for \mbox{$N=20$}, time limit $\unit[43]{s}$ (\textbf{dashed}). Our method (\textbf{\color{color2}solid}), ADMM (\textbf{\color{blue}solid}) and Gurobi (\textbf{\color{gurobicolor}dash-dotted}).} \label{fig:racecars:runtime}
		\end{minipage}
	\end{minipage}
	\hspace*{0.013\linewidth}
	\begin{minipage}[b]{0.62\linewidth}
		\centering
		\begin{subfigure}[b]{0.5\linewidth}
			\centering
			\includegraphics[width=\linewidth]{racecarsN10_trajectory_vielma3.tikz}
			\caption{$N=10$} \label{fig:racecars:N10:trajectory}
		\end{subfigure}
		\hspace*{0.005\columnwidth}
		\begin{subfigure}[b]{0.46\linewidth}
			\centering
			\includegraphics[width=\linewidth]{racecars_trajectory_vielma3.tikz}
			\caption{$N=20$} \label{fig:racecars:N20:trajectory}
		\end{subfigure}
		\caption{Closed-loop position trajectory with initial condition $x = y = \varphi = 0$. Optimal closed-loop trajectory (\textbf{dashed}), our method (\textbf{\color{color2}solid}), ADMM (\textbf{\color{blue}solid}), CPLEX (\textbf{\color{cplexcolor}dashed}), Gurobi (\textbf{\color{gurobicolor}dash-dotted}) and MOSEK ({\color{mosekcolor}dotted}).} \label{fig:racecars:trajectory}
		\vspace*{4.75em}
	\end{minipage}
	\vspace*{-1em}
\end{figure}

\begin{table}
	\centering
	\begin{tabular}{lcccccccccc}
	$N$ & \shortstack{our\\method} & ADMM & Gurobi & CPLEX & MOSEK\\
	\cmidrule[1.5pt](l){1-1} \cmidrule(l){2-2} \cmidrule(l){3-3} \cmidrule(l){4-4}  \cmidrule(l){5-5} \cmidrule(l){6-6}
	10 & 0.8\% (\unit[34]{ms}) & 0.2\% (\unit[21]{ms}) & 3.3\% (\unit[1.1]{s}) & 11\% (\unit[1.8]{s}) & 74\% (\unit[3]{s}) \\
	20 & 0.7\% (\unit[87]{ms}) & 0.3\% (\unit[55]{ms}) & 2.5\% (\unit[10]{s}) & 7.7\% (\unit[12]{s}) & 48\% (\unit[43]{s}) \\
	\bottomrule
\end{tabular}

	\caption{Comparison of relative 2-norm distance $\frac{\|z-\opt{z}\|}{\|\opt{z}\|}$ to the optimal closed-loop position trajectory $\opt{z}$ for $N=10$ and $N=20$. Additionally, in brackets, we report the median computation times for which these trajectories were achieved.} \label{tab:racecars:suboptimality}
\end{table}
In \reffig{fig:racecars:trajectory}, we compare the position of the car, using the \emph{position} dynamics to transform the velocities into positions, via integration of $\dot{\varphi} = \omega$, $\dot{x} = v_x\cos(\varphi)-v_y\sin(\varphi)$ and $\dot{y} = v_x\sin(\varphi)+v_y\cos(\varphi)$. The proposed method (\textbf{\color{color2}solid}) visibly outperforms the commercial MIP solvers in terms of solution quality. This is also illustrated in \reftab{tab:racecars:suboptimality}, where the relative distance to the optimal closed-loop trajectory is reported.
Our method comes very close to the optimal trajectory, while requiring orders of magnitude less runtime.
Both our method and ADMM provide near optimal solutions for similar computational effort. However, our method additionally provides guarantees on the convergence properties of the algorithm.

\subsubsection{Effect of proximal scaling} \label{num:xi}
To better understand the proximal scaling~$\xi$, we consider $2000$ feasible, random instances with different initial states $(v_y,\omega)$ satisfying~\eqref{eqn:racecars:stateconst}. We solve the MPC problem for $40$ values of~$\xi$, with a horizon of~$N=10$ and step size $\gamma = 0.95$. 
\reffig{fig:racecars:xi} demonstrates the relationship between $\xi$,
\begin{enumerate*}[label=(\roman*)]
	\item the number of solved instances and
	\item the number of iterations needed for convergence.
\end{enumerate*}
For larger~$\xi$ more problems are solved, with only $4$~unsolved problems remaining for~$\xi=5000$. Furthermore, the number of iterations needed to reach the consensus threshold (right axis) grows only modestly with larger~$\xi$.
In all instances, the constraint violations and relative suboptimality do not change significantly. These findings are consistent with our expectations from the theory, i.e., that larger~$\xi$ usually lead to more problems solved but at the expense of slower convergence.
\begin{figure}[ht]
	\centering
	\includegraphics[width=0.6\linewidth]{racecars_xi.tikz}
	\caption{Effect of proximal scaling $\xi$. Percentage of problems solved (\textbf{\color{black}solid}, left axis) and number of iterations to reach $10^{-3}$ consensus violation (right axis) for the solved problems, median (\textbf{\color{blue}dashed}), $5$-th and $95$-th percentile (\textbf{\color{cplexcolor}dashed}).} \label{fig:racecars:xi}
\end{figure}

\section{Conclusion}
We propose a low-complexity method for finding local minima of non-convex, non-smooth optimization problems. This simple and fast method is ideal for hybrid MPC on embedded platforms. In numerical experiments, we observe that our method provides ``good'' locally optimal solutions at a fraction of the time needed by global solvers. Moreover, it is competitive with ADMM in terms of speed. In contrast to other local methods, our algorithm additionally provides local optimality and local convergence guarantees.

%\section*{Acknowledgments}
%We would like to thank the associate editor and reviewers for their time and helpful suggestions.

\appendix

\section{Auxiliary results and proofs} \label{app:proofs}

\subsection{Preliminaries}
We first introduce preliminary results used in this appendix, including the some additional notation:
The distance of a point $x$ to a closed set $\set{C}$ is $\dist(x,\set{C}) := \min_{z \in \set{C}} \|x-z\|$.
The \emph{relative boundary} of a set $\set{C}$ is $\relbd \set{C} := \closure\set{C} \setminus \relint\set{C}$, where $\closure\set{C}$ is the \emph{closure} and $\relint\set{C}$ the \emph{relative interior}.

Moreover, we will make frequent use of the following facts relating the Euclidean projection and the regular normal cone of non-convex polyhedral sets, and the preceding properties of operators in the remainder of this appendix.
\begin{proposition}[Projection and regular normals, \protect{\cite[Example~6.16, p.212]{rockafellar1998}, \cite[Proposition~6.17, p.213]{rockafellar1998}}] \label{prop:proj}
	Given a set $\set{C} \subseteq \reals{n}$.
	\begin{enumerate}[label=(\roman*)]
		\item\label{fct:proxNormal} For any $x \in \set{C}$ it holds that $x \in \euclproj{\set{C}}(x-v) \Rightarrow v \in \rnormalcone{\set{C}}(x)$.
		\item\label{fct:projNormal} If $\set{C}$ is convex, then for any $x \in \set{C}$ it holds that $x \in \euclproj{\set{C}}(x-v) \Leftrightarrow v \in \rnormalcone{\set{C}}(x)$.
		\item\label{fct:proxSingleton} For any $x \in \set{C}$  if $x \in \euclproj{\set{C}}(x-\overbar{\tau} v)$ for some $\overbar{\tau} > 0$, then $\euclproj{\set{C}}(x-\tau v) = \{x\}$ for every $\tau \in (0,\overbar{\tau})$.
	\end{enumerate}
\end{proposition}

\begin{definition}[Nonexpansive and firmly nonexpansive operators, \protect{\cite[Definition~4.1~(i)--(ii), p.~59]{bauschke2011}}] \label{def:nonexp}
	Given an operator $T : \set{X} \rightrightarrows \set{Y}$ and a point $\bar{x} \in \set{X}$.
	\begin{enumerate}[label=(\roman*)]
		\item $T$ is locally \emph{nonexpansive} around $\bar{x}$ if there exists an $\epsilon > 0$ such that for all $x,y \in \ball_\epsilon(\bar{x})$ and any $u \in T(x)$, $v \in T(y)$ it holds that
		\begin{equation*}
			\|u-v\|^2 \leq \|x-y\|^2\,.
		\end{equation*} 
		\item $T$ is locally \emph{firmly nonexpansive} around $\bar{x}$ if there exists an $\epsilon > 0$ such that for all $x,y \in \ball_\epsilon(\bar{x})$ and any $u \in T(x)$, $v \in T(y)$ it holds that
		\begin{equation*}
			\|u-v\|^2 \leq \langle x-y, u-v \rangle\,.
		\end{equation*} 
	\end{enumerate}
\end{definition}

Finally, we elaborate on the structure of the regular normal cone of $\set{Z}$, where the following representation is used: $\set{Z} = \bigcup_{i=1}^{I} \set{Z}^i$.
We remind the reader that $\activePoly{\set{Z}}(z) := \{i \in \{1,\ldots,I\} \sep{} z \in \set{Z}^i\}$ is the set of active components of $\set{Z}$ at a point $z \in \set{Z}$.
\refprop{prop:normalConeCharact} states, that the regular normal cone of $\set{Z}$ at $z$ can be written as the finite intersection of the normal cones of its convex polyhedral parts $\set{Z}^i$, where only the sets $\set{Z}^i$ that contain $z$, i.e., the active components of $\set{Z}$ at $z$, need to be considered.
\begin{proposition}[\!\!\protect{\cite[p.~59f]{henrion2008}}] \label{prop:normalConeCharact}
	The regular normal cone $\rnormalcone{\set{Z}}$ of $\set{Z}$ evaluated at $z \in \set{Z}$ is given by $\rnormalcone{\set{Z}}(z) = \bigcap_{i \in \activePoly{\set{Z}}(z)} \rnormalcone{\set{Z}^i}(z)$.
\end{proposition}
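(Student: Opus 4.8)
The plan is to exploit the \emph{locality} of the regular normal cone together with a pigeonhole argument over the finitely many active components. Recall that $\rnormalcone{\set{Z}}(z)$ is defined through a limit superior over attentive sequences $u \lattentive{\set{Z}} z$, so it depends on $\set{Z}$ only in an arbitrarily small neighbourhood of $z$: replacing $\set{Z}$ by $\set{Z} \cap \ball_\delta(z)$ leaves $\rnormalcone{\set{Z}}(z)$ unchanged for any $\delta > 0$, and the same holds for each $\set{Z}^i$.

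First I would discard the inactive pieces. For every $i \notin \activePoly{\set{Z}}(z)$ the set $\set{Z}^i$ is closed with $z \notin \set{Z}^i$, hence $\dist(z,\set{Z}^i) > 0$. Choosing $\delta$ strictly smaller than the minimum of $\dist(z,\set{Z}^i)$ over the finitely many such $i$ gives $\set{Z} \cap \ball_\delta(z) = \big(\bigcup_{i \in \activePoly{\set{Z}}(z)} \set{Z}^i\big) \cap \ball_\delta(z)$, so by locality the claim reduces to a finite union of sets each of which contains $z$.

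Next I would push the intersection through the limit superior. Fixing $x \in \reals{n}$ and writing $f_x(u) := \langle x, u-z \rangle / \|u-z\|$, the key identity is
\[
	\limsup_{u \lattentive{\set{Z}} z} f_x(u) \;=\; \max_{i \in \activePoly{\set{Z}}(z)} \ \limsup_{u \lattentive{\set{Z}^i} z} f_x(u)\,,
\]
valid after the above reduction: any sequence $u_n \to z$ with $u_n \in \set{Z}$, $u_n \neq z$, admits a subsequence lying entirely in a single active component (pigeonhole, since $\activePoly{\set{Z}}(z)$ is finite), while conversely every sequence drawn from an active component lies in $\set{Z}$. Hence the left-hand side is $\leq 0$ exactly when every term on the right is $\leq 0$, that is, $x \in \rnormalcone{\set{Z}}(z)$ if and only if $x \in \bigcap_{i \in \activePoly{\set{Z}}(z)} \rnormalcone{\set{Z}^i}(z)$, which is the claim. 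Only closedness of the $\set{Z}^i$ is used here; their polyhedrality enters later, when one wants an explicit finite inequality description of each $\rnormalcone{\set{Z}^i}(z)$, not in this proof.

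The one thing to watch is the degenerate case in which some active $\set{Z}^i$ reduces locally to the singleton $\{z\}$: then its inner limit superior runs over the empty set, equals $-\infty$, so by convention $\rnormalcone{\set{Z}^i}(z) = \reals{n}$ and that factor drops out of the intersection harmlessly --- consistently with $\set{Z}$ itself being locally $\{z\}$ when \emph{all} active components are. Since the statement is quoted from \cite[p.~59f]{henrion2008}, I would keep the write-up to these two steps plus this remark rather than reprove more than necessary.
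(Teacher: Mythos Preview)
Your argument is correct: the locality reduction followed by the pigeonhole identity for the $\limsup$ over a finite union gives exactly the claimed equality, and your handling of the degenerate singleton case is fine. Note, however, that the paper does not supply its own proof of this proposition --- it is quoted directly from \cite[p.~59f]{henrion2008} and used as a black box --- so there is no in-paper argument to compare against; your write-up simply fills in what the paper outsources to the reference. Your observation that only closedness (not polyhedrality) of the $\set{Z}^i$ is needed here is also accurate and worth keeping.
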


\subsection{Necessary conditions for local optimality}
We start with a proof sketch for \reflem{lem:KKTExistence}, which states that for every local minimum $\loc{z}$ of \refprob{prob:PrimalProblem} there exists multipliers $\loc{\lambda}$ such that $(\loc{z},\loc{\lambda})$ is a generalized KKT point of \refprob{prob:ConsensusProblem}. 
\begin{proof}[Proof sketch of \reflem{lem:KKTExistence}]
	\refprob{prob:PrimalProblem} is equivalent to the consensus problem, \refprob{prob:ConsensusProblem}, which can be written as an optimization problem with variational inequality constraints (OPVIC) as follows
	\begin{align*} \label{prob:opvic}
		\min_{z,y} &\: \cost{z}\\[-0.5em]
		\suchthat &\: \psi(z,y) := \begin{bmatrix}z-y\\y-z\end{bmatrix} \leq 0\,,\: (z,y) \in \set{E} \times \set{Z}\,,\\
		&\: y \in \Omega := \reals{n}\,,\: \langle 0, y-u \rangle \leq 0 \quad \forall u\in\Omega\,.
	\end{align*}
	Because the map $\psi$ is affine, the set $\set{E} \times \set{Z}$ is polyhedral (non-convex) and the set $\Omega$ is polyhedral convex, the constraint set of this problem satisfies a regularity condition called a local error bound \cite[Theorem~4.3, p.~952]{ye2000} at any feasible point. This implies that the problem is calm everywhere, see \cite[Proposition~4.2, p.~951]{ye2000}. In particular, this means that the problem is calm at local solutions, which together with \cite[Theorem~3.6, p.~950]{ye2000} implies that for every local solution $\loc{z}$ to \refprob{prob:ConsensusProblem} there exists a $\loc{\lambda} \in \reals{n}$ such that $(\loc{z},\loc{z},\loc{\lambda})$ satisfies the generalized KKT conditions \eqref{def:KKTpoints} and therefore $(\loc{z},\loc{z},\loc{\lambda})$ is a generalized KKT point.
\end{proof}

\subsection[Proximal KKT points and zeros of K]{Proximal KKT points and zeros of $K_\xi$}
Next, we establishes a necessary and sufficient relationship between regular and $\xi$-proximal KKT points in the proof of \refprop{prop:proxNormalsExistence} and relate the zeros of the operator $K_\xi$ to $\xi$-proximal KKT points of \refprob{prob:ConsensusProblem}, in the proof of \reflem{lem:opKKT}.
\begin{proof}[Proof of \refprop{prop:proxNormalsExistence}]
	\begin{subequations}
	We consider the two statements separately. \begin{enumerate}[label=\roman*),wide]
	\item Any regular KKT point $(z,\lambda)$ satisfies $-\lambda \in \rnormalcone{\set{Z}}(z)$. Thus, by \refprop{prop:normalConeCharact}
	\begin{equation}
		-\lambda \in \rnormalcone{\set{Z}}(z) \Rightarrow -\lambda \in \hspace*{-0.8em}\bigcap_{i \in \activePoly{\set{Z}}(z)}\hspace*{-0.5em} \rnormalcone{\set{Z}^i}(z)\,.\label{eqn:proxNormalsExistence1}
	\end{equation}
	For any $\xi>0$ this is equivalent to
	\begin{equation}
		\text{\eqref{eqn:proxNormalsExistence1}} \Leftrightarrow -\tfrac{1}{\xi}\lambda \in \rnormalcone{\set{Z}^i}(z) \:\:\forall i \in \activePoly{\set{Z}}(z) \Leftrightarrow  z \in \euclproj{\set{Z}^i}(z-\tfrac{1}{\xi}\lambda) \:\:\forall i \in \activePoly{\set{Z}}(z)\,,\label{eqn:proxNormalsExistence2}
	\end{equation}
	where \eqref{eqn:proxNormalsExistence2} follows from the convexity of $\set{Z}^i$ and the relationship between $\euclproj{\set{Z}^i}$ and $\rnormalcone{\set{Z}^i}$, described in \refprop{prop:proj}\ref{fct:projNormal}. By construction, for any $\epsilon > 0$ small enough, we have $\set{Z} \cap \ball_{\epsilon}(z) = \bigcup_{i \in \activePoly{\set{Z}}(z)} \set{Z}^i \cap \ball_{\epsilon}(z)$ and thus $\text{\eqref{eqn:proxNormalsExistence2}} \Rightarrow z \in \euclproj{\set{Z} \cap \ball_{\epsilon}(z)}(z-\tfrac{1}{\xi}\lambda)$.
	We consider $\bar{\xi} := \tfrac{2}{\epsilon}\|\lambda\|$. For any $\xi \geq \bar{\xi}$ we have $\|(z-\tfrac{1}{\xi}\lambda)-z\| = \tfrac{1}{\xi}\|\lambda\|\leq \tfrac{\epsilon}{2}$ . Thus, for all $x \in \set{Z}$ with $\|z-x\| > \epsilon$ we have $\|(z-\tfrac{1}{\xi}\lambda)-x\| > \tfrac{\epsilon}{2}$. Together with $z \in \euclproj{\set{Z} \cap \ball_{\epsilon}(z)}(z-\tfrac{1}{\xi}\lambda)$, this implies that $z \in \euclproj{\set{Z}}(z-\tfrac{1}{\xi}\lambda)$ for all $\xi \geq \bar{\xi} = \tfrac{2}{\epsilon}\|\lambda\|$, which implies that $(z,\lambda)$ is a $\xi$-proximal KKT point for any $\xi \geq \bar{\xi}$.
	\item Given any $\xi>0$, and $\xi$-proximal KKT point $(z,\lambda)$. By definition $z \in \euclproj{\set{Z}}(z-\tfrac{1}{\xi}\lambda )$ which implies $-\tfrac{1}{\xi}\lambda \in \rnormalcone{\set{Z}}(z)$ via \refprop{prop:proj}\ref{fct:proxNormal}. This is equivalent to $-\lambda \in \rnormalcone{\set{Z}}(z)$, which directly implies that $(z,\lambda)$ is a regular KKT point. 
	\end{enumerate}
	\end{subequations}
\end{proof}

\begin{proof}[Proof of \reflem{lem:opKKT}]
	Given any $\xi>0$, and any $\kkt{s} \in \zero K_{\xi}$, by definition $0 \in \{M_{\xi}\kkt{s} + c_{\xi}\} - \euclproj{\set{Z}}(\kkt{s})$, and thereby $\kkt{z} := M_{\xi}\kkt{s} + c_{\xi} \in \euclproj{\set{Z}}(\kkt{s})$. We define $\kkt{\lambda} := \xi(\kkt{z}-\kkt{s})$ and thus $\kkt{z} \in \euclproj{\set{Z}}(\kkt{z}-\tfrac{1}{\xi}\kkt{\lambda}) \;\Leftrightarrow\;\text{\text{\eqref{eqn:ProximalKKTPoint2}}}$ holds.
	Furthermore, we have $\kkt{z} = M_{\xi}(\kkt{z} - \tfrac{1}{\xi}\kkt{\lambda}) + c_{\xi}$ which, using \eqref{eqn:Mrhoxi}, is equivalent to $\kkt{z} = R\left(\kkt{\lambda}-h-H\bar{v}\right) + \bar{v}$. By strict convexity $\kkt{z}$ is the unique solution to $0 \in \partial_z(\tfrac{1}{2}z^\transp H z + (h-\kkt{\lambda})^\transp z + \charfunc{\set{E}}(z))(\kkt{z})$, which is equivalent to $0 \in \{H\kkt{z}+h-\kkt{\lambda}\}+\rnormalcone{\set{E}}(\kkt{z})$ and therefore \eqref{eqn:ProximalKKTPoint1} holds and $(\kkt{z},\kkt{\lambda})$ is a $\xi$-proximal KKT point.
\end{proof}

\subsection{Local convexification}
\reflem{lem:convNCone} is one of the main mathematical tools that we use to prove the local optimality and local convergence of the method. For any $z \in \set{Z}$, it relates the set non-convex $\set{Z}$ to the closed convex set $\prnormalcone{\set{Z}}(z) + \{z\}$, in a neighborhood of $z$. Specifically, it states that $\set{Z}$ is contained in $\prnormalcone{\set{Z}}(z) + \{z\}$, in a neighborhood of $z$, and that the regular normal cones of the two sets at $z$ are the same.
\begin{proof}[Proof of \reflem{lem:convNCone}]
	We prove the two parts individually.
	\begin{enumerate}[label=\roman*),wide]
		\item Given $\epsilon > 0$, take any $\bar{z} \in \set{Z}$ and $z \in \set{Z} \cap \ball_{\epsilon}(\bar{z})$. Using the definition of the polar cone we have that $z \in \prnormalcone{\set{Z}}(\bar{z}) + \{\bar{z}\}$ if and only if $\langle z-\bar{z}, v\rangle \leq 0$ for all $v\in\rnormalcone{\set{Z}}(\bar{z})$.
		We assume for the sake of contradiction that for all $\epsilon > 0$ there exists a $v \in\rnormalcone{\set{Z}}(\bar{z})$ such that $\langle z-\bar{z},v\rangle>0$. Using \refprop{prop:normalConeCharact}, we have that for any $\epsilon >0$ small enough $v \in\rnormalcone{\set{Z}}(\bar{z})$ is equivalent to $\langle v, u-\bar{z} \rangle \leq 0$ for all $u \in \set{Z}^i$, and all $i \in \activePoly{\set{Z}}(\bar{z})$. However, since $z \in \set{Z}$, there exists an $i \in \activePoly{\set{Z}}(\bar{z})$ such that $z \in \set{Z}^i$, and therefore by choosing $u=z$ we have $\langle v, u-\bar{z} \rangle < \langle z-\bar{z},v\rangle$, a contradiction.
		\item For any $z\in\set{Z}$, we have that $\rnormalcone{\set{Z}}(z) = \pprnormalcone{\set{Z}}(z)$, due to $\rnormalcone{\set{Z}}(z)$ closed and convex, see \cite[Corollary~6.21, p.~216]{rockafellar1998}. From the definition of the polar and regular normal cones, see \cite[p.~203, p.~215]{rockafellar1998}, and convexity of $\prnormalcone{\set{Z}}(z)$ we have that $v \in \rnormalcone{\set{Z}}(z)$ is equivalent to $\langle v,w \rangle \leq 0$ for all $w \in \prnormalcone{\set{Z}}(z)$, which is equivalent to $\langle v,u-z \rangle \leq 0$ for all $u \in \prnormalcone{\set{Z}}(z) + \{z\}$. This means $v \in \rnormalcone{\prnormalcone{\set{Z}}(z)+\{z\}}(z)$ and implies the result.
	\end{enumerate}
\end{proof}

\subsection{Local convergence to local minima}
We are now ready to prove the two central lemmas that lead to the local convergence result in \refthm{thm:convergence}: \reflem{lem:niceproj} states that in a neighborhood around almost all fixed points of $T_\xi$, the projection $\euclproj{\set{Z}}$ behaves like a projection onto a convex set. This is used in the proof of \reflem{lem:niceop} to show that the operator $T_\xi$ is nonexpansive, single-valued and continuous in a neighborhood around almost all fixed points.

To prove \reflem{lem:niceproj}, the following two auxiliary propositions are needed. \refprop{prop:projqne} gives a local characterization of the projection around points $\bar{s}$ where it is unique. It states, that there always exists a neighborhood $\ball_\epsilon(\bar{s})$, with $\epsilon>0$, such that for all points in the neighborhood the projection onto $\set{Z}$ can be restricted to the projection onto just the union of the active components of $\set{Z}$ at $\bar{z}=\euclproj{\set{Z}}(\bar{s})$. Furthermore for all points $s \in \ball_\epsilon(\bar{s})$ in the neighborhood the result $z \in \euclproj{\set{Z}}(s)$ of the projection satisfies the bound $\|z-\bar{z}\| \leq \|s-\bar{s}\|$.
\refprop{prop:xilb} states that the set $\fixc{T_\xi}{\loc{z}}$ of fixed points corresponding to a given local optimum $\loc{z}$ of \refprob{prob:PrimalProblem} either is empty for all $\xi$ or is non-empty for all $\xi$ large enough.
\begin{proposition}[Projection] \label{prop:projqne}
	Given $\bar{z}$ and $\bar{s}$ such that $\euclproj{\set{Z}}(\bar{s}) = \{\bar{z}\}$, then there exists an $\epsilon > 0$ such that for all $s\in\ball_{\epsilon}(\bar{s})$: $\euclproj{\set{Z}}(s) = \euclproj{\set{Y}(\bar{z})}(s) \subseteq \ball_{\|s-\bar{s}\|}(\bar{z})$, where $\set{Y}(z) := \bigcup_{i \in \activePoly{\set{Z}}(z)} \set{Z}^i$.
\end{proposition}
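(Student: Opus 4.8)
The plan is to exploit the finite polyhedral decomposition $\set{Z} = \bigcup_{i=1}^{I}\set{Z}^i$ together with the fact that single-valuedness of $\euclproj{\set{Z}}$ at $\bar{s}$ forces a strict separation between $\bar{z}$ and all inactive components. First I would record the elementary facts needed: each $\set{Z}^i$ is closed (a finite intersection of halfspaces and hyperplanes), so $\dist(\bar{s},\set{Z}^i)$ is attained whenever $\set{Z}^i \neq \varnothing$ and equals $+\infty$ otherwise, and $\dist(\cdot,\set{C})$ is $1$-Lipschitz for any closed $\set{C}$. Setting $d := \dist(\bar{s},\set{Z}) = \|\bar{s}-\bar{z}\|$, I would argue that for every \emph{inactive} component $\set{Z}^j$ (i.e.\ $j\notin\activePoly{\set{Z}}(\bar{z})$) we have $\dist(\bar{s},\set{Z}^j) > d$: if the distance were attained at some $w^\ast\in\set{Z}^j\subseteq\set{Z}$ with $\|\bar{s}-w^\ast\| = d$, then $w^\ast\in\euclproj{\set{Z}}(\bar{s})=\{\bar{z}\}$, forcing $\bar{z}\in\set{Z}^j$, a contradiction. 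Since there are finitely many inactive components, $\eta := \min_{j\notin\activePoly{\set{Z}}(\bar{z})}\dist(\bar{s},\set{Z}^j) - d > 0$ (with the convention $\eta := +\infty$ if every component is active), and I would take any $\epsilon \in (0,\eta/2)$.

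Next I would show this $\epsilon$ yields the first identity. For $s\in\ball_\epsilon(\bar{s})$, Lipschitz continuity of the distance functions gives $\dist(s,\set{Z}) \le \|s-\bar{z}\| \le d+\epsilon$, whereas for every inactive $j$, $\dist(s,\set{Z}^j) \ge \dist(\bar{s},\set{Z}^j)-\epsilon \ge d+\eta-\epsilon > d+\epsilon \ge \dist(s,\set{Z})$. Hence no inactive component contains a point of $\set{Z}$ at minimal distance to $s$, so $\dist(s,\set{Z}) = \dist(s,\set{Y}(\bar{z}))$ and the corresponding argmin sets coincide, i.e.\ $\euclproj{\set{Z}}(s) = \euclproj{\set{Y}(\bar{z})}(s)$ with $\set{Y}(\bar{z}) = \bigcup_{i\in\activePoly{\set{Z}}(\bar{z})}\set{Z}^i$.

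For the inclusion $\euclproj{\set{Z}}(s)\subseteq\ball_{\|s-\bar{s}\|}(\bar{z})$, I would first observe that $\bar{z}$ is the unique closest point of each \emph{active} $\set{Z}^i$ to $\bar{s}$: since $\set{Z}^i\subseteq\set{Z}$ and $\bar{z}\in\set{Z}^i$, any $w\in\set{Z}^i$ with $\|\bar s - w\| = d$ lies in $\euclproj{\set{Z}}(\bar s) = \{\bar z\}$; by convexity this gives $\euclproj{\set{Z}^i}(\bar{s}) = \{\bar{z}\}$. Now take any $z\in\euclproj{\set{Z}}(s)$; by the previous paragraph $z\in\set{Z}^i$ for some active $i$, and since $\set{Z}^i\subseteq\set{Z}$ the point $z$ is also distance-minimizing over $\set{Z}^i$, hence $z = \euclproj{\set{Z}^i}(s)$ by convexity. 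Non-expansiveness of the projection onto the convex set $\set{Z}^i$ \cite[Proposition~4.8, p.~61]{bauschke2011} then yields $\|z-\bar{z}\| = \|\euclproj{\set{Z}^i}(s)-\euclproj{\set{Z}^i}(\bar{s})\| \le \|s-\bar{s}\|$, which is the claim.

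The only genuinely delicate point is the strict inequality $\dist(\bar{s},\set{Z}^j) > d$ for inactive $j$ and the passage from finitely many such strict inequalities to a uniform gap $\eta>0$; this is precisely where single-valuedness of $\euclproj{\set{Z}}(\bar{s})$ and finiteness of the decomposition are used. Everything else reduces to bookkeeping with the $1$-Lipschitz distance function and the standard non-expansiveness of projections onto convex sets.
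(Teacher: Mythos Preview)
Your proof is correct and follows essentially the same approach as the paper: establish a strict gap between $\|\bar{s}-\bar{z}\|$ and the distance from $\bar{s}$ to the inactive part, propagate it to a neighborhood via $1$-Lipschitzness of the distance, and obtain the ball inclusion from non-expansiveness of $\euclproj{\set{Z}^i}$ on the active components. The only cosmetic differences are that the paper proves the inclusion first and the identity second, defines the gap as $\varsigma := \inf_{y \in \set{Z}\setminus\set{Y}(\bar{z})}\|y-\bar{s}\| - \|\bar{z}-\bar{s}\|$ rather than your component-wise $\eta$, and takes $\epsilon < \varsigma/3$ instead of $\epsilon < \eta/2$.
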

\begin{proof}
	First, we will show that for all $s \in \reals{n}$, we have $\euclproj{\set{Y}(\bar{z})}(s) \subseteq \ball_{\|s-\bar{s}\|}(\bar{z})$.
	For all $i \in \activePoly{\set{Z}}(\bar{z})$, the projection $\euclproj{\set{Z}^i}$ is nonexpansive because $\set{Z}^i$ is closed and convex. Therefore, for any $i \in \activePoly{\set{Z}}(\bar{z})$ and any $s \in \reals{n}$ we have $\|\euclproj{\set{Z}^i}(s)-\euclproj{\set{Z}^i}(\bar{s})\| \leq \|s-\bar{s}\|$.
	Consider any $z \in \euclproj{\set{Y}(\bar{z})}(s)$, since $\bar{z}$ and $\bar{s}$ are such that $\euclproj{\set{Z}}(\bar{s}) = \{\bar{z}\}$ it follows that $\euclproj{\set{Y}(\bar{z})}(\bar{s}) = \euclproj{\set{Z}^i}(\bar{s}) = \{\bar{z}\}$ for all $i \in \activePoly{\set{Z}}(\bar{z})$. In particular for some $i \in \activePoly{\set{Z}}(\bar{z})$ we have
	\begin{equation} \label{eqn:projne5}
		\|z-\bar{z}\| = \|\euclproj{\set{Z}^i}(s)-\euclproj{\set{Z}^i}(\bar{s})\| \leq \|s-\bar{s}\|\,,
	\end{equation}
	which implies $\euclproj{\set{Y}(\bar{z})}(s) \subseteq \ball_{\|s-\bar{s}\|}(\bar{z})$.
	\begin{subequations}
	Second, we show that there exists an $\epsilon > 0$ such that for any $y \in \set{Z} \setminus \set{Y}(\bar{z})$
	\begin{equation}
		\|y-s\| > \|z-s\| \quad \forall s \in \ball_{\epsilon}(\bar{s})\,,z \in \euclproj{\set{Y}(\bar{z})}(s)\,. \label{eqn:projqne2}
	\end{equation}
	We define $\varsigma := \inf_{y \in \set{Z} \setminus \set{Y}(\bar{z})} \|y-\bar{s}\| - \|\bar{z}-\bar{s}\| > 0$, by closedness of $\set{Z}$ and $\set{Z}^i$. The triangle inequality implies that for any $y \in \set{Z} \setminus \set{Y}(\bar{z})$ we have $\|y-\bar{s}\| \leq \|y-s\| + \|s-\bar{s}\|$.
	We choose $\epsilon < \frac{\varsigma}{3}$, which implies
	\begin{equation}
		\|y-s\| \geq \|y-\bar{s}\| - \|s-\bar{s}\| \geq \varsigma + \|\bar{z}-\bar{s}\| - \|s-\bar{s}\| > 3\epsilon + \|\bar{z}-\bar{s}\| - \epsilon\,, \label{eqn:projqne3}
	\end{equation}
	where in the second step we have used the definition of $\varsigma$, and in the third step we have used $\varsigma > 3\epsilon$ and $\|s-\bar{s}\| \leq \epsilon$.
	Now we consider any $z \in \euclproj{\set{Y}(\bar{z})}(s)$, via the triangle inequality $\|z-s\| \leq \|z-\bar{z}\| + \|s-\bar{s}\| + \|\bar{z}-\bar{s}\|$
	and \eqref{eqn:projne5} we have
	\begin{equation*}
		\|z-s\| \leq 2\|s-\bar{s}\| + \|\bar{z}-\bar{s}\| \leq 2\epsilon + \|\bar{z}-\bar{s}\|\,,
	\end{equation*}
	which together with \eqref{eqn:projqne3} implies \eqref{eqn:projqne2}.
	\end{subequations}
	This implies the result.
\end{proof}

\begin{proposition} \label{prop:xilb}
	Given any local minimum $\loc{z}$ of \refprob{prob:PrimalProblem}.\\Either $\fixc{T_\xi}{\loc{z}} = \varnothing$ for all $\xi > 0$ or there exists a $\bar{\xi}>0$ such that $\fixc{T_\xi}{\loc{z}} \neq \varnothing$ for all $\xi \geq \bar{\xi}$.
\begin{proof}
	Given any local minimum $\loc{z}$. Either $\fixc{T_{\xi}}{\loc{z}} = \varnothing$ for all $\xi > 0$ or there exists a $\bar{\xi}$ such that there exists an $\bar{s} \in \fixc{T_{\bar{\xi}}}{\loc{z}} \neq \varnothing$. By \reflem{lem:opKKT}, it holds that $(\loc{z},\lambda)$, with $\lambda := \bar{\xi}(\loc{z}-\bar{s})$, is a $\bar{\xi}$-proximal KKT point. From \refprop{prop:proxNormalsExistence}, it further follows that $(\loc{z},\lambda)$ is also a $\xi$-proximal KKT point for any $\xi \geq \bar{\xi}$, which implies $s := \loc{z} - \tfrac{1}{\xi}\lambda \in \fixc{T_{\xi}}{\loc{z}}$, and thus $\fixc{T_{\xi}}{\loc{z}} \neq \varnothing$ for all $\xi \geq \bar{\xi}$.
\end{proof}
\end{proposition}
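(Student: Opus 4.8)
The plan is to treat each local minimum separately — \refprop{prop:proxNormalsExistence} provides, for each one, either a scaling beyond which the corresponding fixed-point set is nonempty, or the conclusion that it is empty for every scaling — and then to combine these into a single $\bar{\xi}$ using the fact that there are only finitely many local minima. First I would recall that, by positive definiteness of $H$ and polyhedrality of $\set{E}\cap\set{Z}$, \refprob{prob:PrimalProblem} has only finitely many local minima. Fix one, $\loc{z}$. By the definition of $\fixc{T_\xi}{\loc{z}}$ (and \reflem{lem:opKKT}, which ties its elements to $\fix T_\xi$), we have $\fixc{T_\xi}{\loc{z}}\neq\varnothing$ if and only if $\loc{z}$ admits a $\xi$-proximal KKT multiplier: if $(\loc{z},\loc{\lambda})$ is $\xi$-proximal then $\loc{z}-\tfrac{1}{\xi}\loc{\lambda}\in\fixc{T_\xi}{\loc{z}}$, and conversely any $s\in\fixc{T_\xi}{\loc{z}}$ yields the $\xi$-proximal multiplier $\xi(\loc{z}-s)$.

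Next I would distinguish two cases for $\loc{z}$. If $\loc{z}$ admits no regular KKT multiplier, then by the contrapositive of \refprop{prop:proxNormalsExistenceii} it admits no $\xi$-proximal multiplier for any $\xi>0$, so $\fixc{T_\xi}{\loc{z}}=\varnothing$ for all $\xi>0$. If instead $(\loc{z},\loc{\lambda})$ is a regular KKT point for some $\loc{\lambda}$, then \refprop{prop:proxNormalsExistencei} yields a $\bar{\xi}_{\loc{z}}>0$ such that $(\loc{z},\loc{\lambda})$ is a $\xi$-proximal KKT point for all $\xi\geq\bar{\xi}_{\loc{z}}$, whence $\fixc{T_\xi}{\loc{z}}\neq\varnothing$ for all $\xi\geq\bar{\xi}_{\loc{z}}$. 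In particular, for each local minimum the scalings with $\fixc{T_\xi}{\loc{z}}\neq\varnothing$ form either the empty set or a set containing the ray $[\bar{\xi}_{\loc{z}},\infty)$.

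To finish, I would set $\bar{\xi}$ to be the maximum of the finitely many thresholds $\bar{\xi}_{\loc{z}}$ over those local minima that admit a regular multiplier (and $\bar{\xi}:=1$ if none do). Then for any $\xi\geq\bar{\xi}$ and any local optimum $\loc{z}$: if $\loc{z}$ admits a regular multiplier, $\xi\geq\bar{\xi}\geq\bar{\xi}_{\loc{z}}$ forces $\fixc{T_\xi}{\loc{z}}\neq\varnothing$; otherwise $\fixc{T_\xi}{\loc{z}}=\varnothing$. Either way the asserted dichotomy holds with this single $\bar{\xi}$. I expect the main obstacle to be precisely this uniformity: for any fixed local minimum the existence of a threshold is immediate from \refprop{prop:proxNormalsExistencei}, so everything rests on (a) there being only finitely many local minima, so that the maximum of the per-minimum thresholds is finite, and (b) the $\xi$-monotone structure packaged in \refprop{prop:proxNormalsExistence} — a $\xi$-proximal KKT point is regular, and regularity does not involve $\xi$ — which ensures that, once above the threshold, nonemptiness does not lapse.
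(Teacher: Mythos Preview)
Your argument is correct and follows the same line as the paper: both reduce to \refprop{prop:proxNormalsExistence} to show that, for each local minimum, nonemptiness of $\fixc{T_\xi}{\loc{z}}$ persists for all $\xi$ above a threshold. You are in fact more explicit than the paper on the uniformity step, invoking finiteness of the local minima to take the maximum of the per-minimum thresholds; the paper's proof treats a single $\loc{z}$ and leaves that step implicit.
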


We use the following claim to prove \reflem{lem:niceproj}. The proof of \refclm{clm:niceproj} is presented immediately after the proof of \reflem{lem:niceproj}. The claim states that if $\xi$ is chosen large enough, then for any local minimum $\loc{z}$ for which the operator $T_\xi$ has fixed points such that $\loc{z}$ itself is not a fixed point of $T_\xi$, we have that for almost all fixed points of $T_\xi$, the projection is unique and the fixed point lies in the relative interior of the shifted regular normal cone $\rnormalcone{\set{Z}}(\loc{z}) + \{\loc{z}\}$ of $\set{Z}$ at $\loc{z}$.
\begin{claim}\label{clm:niceproj}
	Given Assumptions \refass{ass:existence}--\refass{ass:nondeg}. For any $\xi>0$ large enough and any local minimum $\loc{z}$ to \refprob{prob:ConsensusProblem}, with $\fixc{T_\xi}{\loc{z}} \neq \varnothing$ and $\loc{z} \not\in \fixc{T_\xi}{\loc{z}}$, we have that
	\begin{equation}\label{eqn:niceprojCond}
		\euclproj{\set{Z}}(\loc{s}) = \{\loc{z}\} \text{ and } \loc{s} \in \relint \rnormalcone{\set{Z}}(\loc{z}) + \{\loc{z}\}\,,
	\end{equation}
	for all fixed points $\loc{s} \in \fixc{T_\xi}{\loc{z}}$, except a measure zero subset.
\end{claim}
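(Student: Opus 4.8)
The plan is to exhibit, for each sufficiently large $\xi$, a single Lebesgue-negligible set $\set{B}\subseteq\fixc{T_\xi}{\loc{z}}$ --- negligible for Lebesgue measure on the affine hull of the convex set $\fixc{T_\xi}{\loc{z}}$ \cite[p.~213]{rockafellar1998} --- off which both conclusions hold, and to assemble it in three steps. I would start by fixing a distinguished fixed point available at every large $\xi$: by \eqref{eqn:ProximalKKTPoint2} and \cite[Example~6.16, p.~212]{rockafellar1998} every $\loc{s}\in\fixc{T_\xi}{\loc{z}}$ satisfies $\loc{s}-\loc{z}\in\rnormalcone{\set{Z}}(\loc{z})$, with $\loc{z}\in\euclproj{\set{Z}}(\loc{s})$ by the definition of $\fixc{T_\xi}{\loc{z}}$; moreover ``$\loc{z}\in\fixc{T_\xi}{\loc{z}}$'' is equivalent to $-(H\loc{z}+h)\in\set{E}^\orth$ and emptiness of $\fixc{T_\xi}{\loc{z}}$ is settled for all large $\xi$ at once by \refprop{prop:xilb}, so the hypotheses persist for all large $\xi$. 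Assumption~\refass{ass:nondeg} now supplies, at some scaling $\xi_0$, a point $\loc{s}_0\in\fixc{T_{\xi_0}}{\loc{z}}$ with $\loc{\lambda}_0:=\xi_0(\loc{z}-\loc{s}_0)$ satisfying $-\loc{\lambda}_0\in\relint\rnormalcone{\set{Z}}(\loc{z})$; by \refprop{prop:proxNormalsExistenceii} the pair $(\loc{z},\loc{\lambda}_0)$ is a regular KKT point, hence by \refprop{prop:proxNormalsExistencei} it is $\xi$-proximal for every $\xi$ past a threshold, so $\loc{s}_\xi:=\loc{z}-\tfrac1\xi\loc{\lambda}_0\in\fixc{T_\xi}{\loc{z}}$, with $\loc{s}_\xi-\loc{z}=\tfrac{\xi_0}{\xi}(\loc{s}_0-\loc{z})$ still in $\relint\rnormalcone{\set{Z}}(\loc{z})$ (positive scaling preserves relative interiors of cones) and $\|\loc{s}_\xi-\loc{z}\|=\tfrac1\xi\|\loc{\lambda}_0\|\to0$.

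For the relative-interior conclusion, note that $\fixc{T_\xi}{\loc{z}}-\{\loc{z}\}$ is a convex subset of the cone $\rnormalcone{\set{Z}}(\loc{z})$, which is \emph{polyhedral} by \refprop{prop:normalConeCharact}, and by the previous step it meets the relative interior of that cone at $\loc{s}_\xi-\loc{z}$. The points where the conclusion fails have $\loc{s}-\loc{z}$ in $\relbd\rnormalcone{\set{Z}}(\loc{z})$, hence in one of the finitely many proper faces of this cone. Since a relative-interior point of a polyhedral cone lies outside the affine hull of every proper face, the affine hull of $\fixc{T_\xi}{\loc{z}}$ is not contained in that of any proper face; therefore each of these finitely many face-traces is a proper (lower-dimensional) affine subset of the affine hull of $\fixc{T_\xi}{\loc{z}}$, hence negligible in it, and so is their union. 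This gives $\loc{s}-\loc{z}\in\relint\rnormalcone{\set{Z}}(\loc{z})$ off a null set $\set{B}_1$.

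For the projection conclusion, fix $\loc{s}\in\fixc{T_\xi}{\loc{z}}\setminus\set{B}_1$, so $\loc{s}-\loc{z}\in\rnormalcone{\set{Z}}(\loc{z})=\bigcap_{i\in\activePoly{\set{Z}}(\loc{z})}\rnormalcone{\set{Z}^i}(\loc{z})$; then for each active $i$ the set $\set{Z}^i$ is convex, so $\euclproj{\set{Z}^i}(\loc{s})=\{\loc{z}\}$, while $\loc{z}\in\euclproj{\set{Z}}(\loc{s})$. Consequently, if $\loc{z}$ were not the unique projection onto $\set{Z}$, some \emph{inactive} component $\set{Z}^j$ (with $\dist(\loc{z},\set{Z}^j)>0$, as $\set{Z}^j$ is closed and $\loc{z}\notin\set{Z}^j$) would attain $\dist(\loc{s},\set{Z}^j)=\|\loc{s}-\loc{z}\|$; the triangle inequality rules this out once $\|\loc{s}-\loc{z}\|<\tfrac12\min_{j\text{ inactive}}\dist(\loc{z},\set{Z}^j)$, which by the first step already holds on a ball around $\loc{s}_\xi$. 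The residual bad points lie in $\bigcup_{j\text{ inactive}}\{\loc{s}\in\fixc{T_\xi}{\loc{z}}:\dist(\loc{s},\set{Z}^j)=\|\loc{s}-\loc{z}\|\}$; since $\set{Z}^j$ is convex, $\dist(\cdot,\set{Z}^j)^2$ is $C^1$ and piecewise-quadratic, and the identity $\dist(\cdot,\set{Z}^j)=\|\cdot-\loc{z}\|$ can hold on no relatively open subset of the affine hull of $\fixc{T_\xi}{\loc{z}}$ --- otherwise, comparing unit gradients would force $\euclproj{\set{Z}^j}\equiv\loc{z}\notin\set{Z}^j$ there --- so each such ``tie'' locus is lower-dimensional and negligible. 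Adjoining these finitely many null sets to $\set{B}_1$ yields the set $\set{B}$ demanded by the Claim.

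I expect the main obstacle to be the measure-theoretic bookkeeping of the last two steps: in particular, handling a possibly unbounded $\fixc{T_\xi}{\loc{z}}$ --- whose recession directions $d$ satisfy $\langle d,u-\loc{z}\rangle\le0$ for every $u\in\set{Z}$, which is exactly what keeps $\euclproj{\set{Z}}$ pinned at $\{\loc{z}\}$ along such directions --- and making rigorous that the ``tie'' loci are genuinely lower-dimensional; together with the bookkeeping in the first step needed to turn the single non-degenerate multiplier granted by \refass{ass:nondeg} at one scaling into a usable representative at \emph{every} sufficiently large $\xi$, where \refprop{prop:proxNormalsExistence} and \refprop{prop:xilb} do the work.
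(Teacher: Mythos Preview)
Your overall plan is sound and your first two steps mirror the paper's: the rescaled distinguished point $\loc{s}_\xi$ built from \refass{ass:nondeg} via \refprop{prop:proxNormalsExistence} is exactly how the paper proceeds, and your face-based argument for the relative-interior condition works.

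The gradient argument in your third step, however, does not close. Comparing gradients of $\dist(\cdot,\set{Z}^j)$ and $\|\cdot-\loc{z}\|$ on a relatively open subset of the affine hull of $\fixc{T_\xi}{\loc{z}}$ only equates their \emph{tangential} components along the linear span $V$ of that hull, yielding $\euclproj{\set{Z}^j}(s)-\loc{z}\perp V$ rather than $\euclproj{\set{Z}^j}(s)=\loc{z}$, so the contradiction you want does not follow. The repair is simpler than a differential argument: the function $\psi(s):=\|s-\loc{z}\|^2-\dist(s,\set{Z}^j)^2$ is \emph{convex} (in fact piecewise-affine, since the $\|s\|^2$ terms cancel), nonpositive on $\fixc{T_\xi}{\loc{z}}$, and strictly negative at $\loc{s}_\xi$. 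A convex function attaining its maximum at a relative-interior point of a convex set is constant there, which contradicts $\psi(\loc{s}_\xi)<0$; hence each tie locus lies in $\relbd\fixc{T_\xi}{\loc{z}}$ and is null.

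Once repaired this way, your argument and the paper's converge. The paper shows in one stroke, via the relative-interior calculus of \cite[Proposition~2.42, p.~65]{rockafellar1998}, that $\relint\fixc{T_\xi}{\loc{z}}$ coincides with the set of $\loc{s}$ satisfying \eqref{eqn:niceprojCond}, so the exceptional set is exactly $\relbd\fixc{T_\xi}{\loc{z}}$, null by convexity and \cite[Theorem~1, p.~90]{lang1986}. The device you are missing is the decomposition $\{s:\loc{z}\in\euclproj{\set{Z}}(s)\}=(\rnormalcone{\set{Z}}(\loc{z})+\{\loc{z}\})\cap\{s:\|s-\loc{z}\|\le\dist(s,\set{Y}^c(\loc{z}))\}$, whose two convex factors have relative interiors corresponding precisely to your two conditions; intersecting with the affine constraint from \eqref{eqn:ProximalKKTPoint1} and taking relative interiors simultaneously gives the result without treating the conditions separately. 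Your route is more elementary but, as you anticipated, the bookkeeping for the tie loci is where the real work lies, and the convexity observation above is what makes it go through.
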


\begin{proof}[Proof of \reflem{lem:niceproj}]
	Given \refclm{clm:niceproj} and consider any pair $\loc{z},\loc{s}$ satisfying \eqref{eqn:niceprojCond}. We will show that in a neighborhood of $\loc{s}$ the projection onto $\set{Z}$ is the same as projecting onto an appropriate closed, convex set. This then implies local firm nonexpansiveness, continuity and single-valuedness of the projection.
	
	Let $\ball_\epsilon(\loc{s})$ be a neighborhood of $\loc{s}$, for some $\epsilon>0$ small enough. We define $\set{S} := \lin (\rnormalcone{\set{Z}}(\loc{z}))$, the linear subspace spanned by $\rnormalcone{\set{Z}}(\loc{z})$, and its orthogonal complement $\set{S}^\orth$. For any $s \in \ball_\epsilon(\loc{s})$, we can write $s = \loc{s} + u + v$, where $u \in \set{S}$, $v \in \set{S}^\orth$ and $u+v \in \ball_\epsilon$. This allows us to argue about the contribution of $u$ and $v$ successively. We consider $\bar{s}:=\loc{s}+u$. Due to $\euclproj{\set{Z}}(\loc{s}) = \{\loc{z}\}$, continuity of the distance function, and \refprop{prop:proj}\ref{fct:proxSingleton}, we have $\euclproj{\set{Z}}(\bar{s}) = \euclproj{\set{Z}}(\loc{s}+u) = \{\loc{z}\}$ for all $u \in \ball_{\epsilon}$, given that $\epsilon$ is small enough. Furthermore, due to openness of $\relint \rnormalcone{\set{Z}}(\loc{z})$ on $\set{S}$, we have $\bar{s} \in \relint \rnormalcone{\set{Z}}(\loc{z}) +\{\loc{z}\}$ for all $u \in \ball_{\epsilon}$, with $\epsilon>0$ small enough. Therefore instead of considering a pair $\loc{z},\loc{s}$ satisfying \eqref{eqn:niceprojCond}, we will consider $\loc{z},\bar{s}$. We now examine the contribution of $v$, i.e., $s = \bar{s} + v$. Similarly to \reflem{lem:convNCone}, where we showed that we can replace the regular normal cone of $\set{Z}$ by the normal cone of $\prnormalcone{\set{Z}}(\loc{z})+\{z\}$, we will now consider the projection onto that set. We do this because $\prnormalcone{\set{Z}}(\loc{z})+\{\loc{z}\}$ is a local convexification of $\set{Z}$. We have $v \in \set{S}^\orth \subseteq \prnormalcone{\set{Z}}(\loc{z})$. We consider the projection $\euclproj{\prnormalcone{\set{Z}}(\loc{z})}(s-\loc{z}) = \argmin_{w \in \prnormalcone{\set{Z}}(\loc{z})} \|\bar{s}-\loc{z}+v-w\|^2 = \argmin_{w \in \prnormalcone{\set{Z}}(\loc{z})} \|\bar{s}-\loc{z}\|^2 + \|v-w\|^2 - 2 \langle \bar{s}-\loc{z}, w \rangle$, where we have used $\langle \bar{s}-\loc{z}, v \rangle = 0$. By $\langle \bar{s}-\loc{z}, w \rangle \leq 0$, $v$ is a minimizer. It is unique because the set $\prnormalcone{\set{Z}}(\loc{z})$ is closed and convex, therefore $\euclproj{\prnormalcone{\set{Z}}(\loc{z})}(s-\loc{z}) = \{v\}$ and consequently $\euclproj{\prnormalcone{\set{Z}}(\loc{z}) +\{\loc{z}\}}(\bar{s}+v) = \{\loc{z}+v\}$.
	Additionally, due to $\set{Z} \cap \ball_{\epsilon}(\loc{z}) \subseteq\prnormalcone{\set{Z}}(\loc{z}) +\{\loc{z}\}$ from \reflem{lem:convNCone} and $\loc{z}+v \in \set{Z} \cap \ball_{\epsilon}(\loc{z})$ due to Assumption \refass{ass:Z}, it follows that $\loc{z}+v \in \euclproj{\set{Z}}(\bar{s}+v)$.
	Finally using \refprop{prop:projqne} we know that $\euclproj{\set{Z}}(\bar{s}+v) \subseteq \set{Z} \cap \ball_{\|v\|}(\loc{z}) \subseteq \set{Z} \cap \ball_\epsilon(\loc{z})$, which immediately implies that $\euclproj{\set{Z}}(\bar{s}+v) = \euclproj{\prnormalcone{\set{Z}}(\loc{z}) +\{\loc{z}\}}(\bar{s}+v) = \{\loc{z}+v\}$.
	This means for all $s \in \ball_\epsilon(\loc{s})$ the projection onto $\set{Z}$ is equivalent to the projection onto the closed, convex set $\prnormalcone{\set{Z}}(\loc{z}) + \{\loc{z}\}$ and therefore enjoys all of the properties of a projection onto a closed convex set. In particular single-valuedness, continuity and firm nonexpansiveness \cite[Proposition~4.8, p.~61]{bauschke2011}. 
\end{proof}

\begin{proof}[Proof of \refclm{clm:niceproj}]
	Given a local minimum $\loc{z}$ to \refprob{prob:ConsensusProblem} and consider any $\bar{\xi}$ according to \refprop{prop:xilb} such that \refass{ass:nondeg} holds. By definition of $\fix T_\xi$ and by \refprop{prop:proj}\ref{fct:proxSingleton}, \refass{ass:nondeg} also holds for any larger $\xi \geq \bar{\xi}$. We will show that for any $\xi > \bar{\xi}$, and any local minimum $\loc{z}$ such that $\fixc{T_\xi}{\loc{z}} \neq \varnothing$ and $\loc{z} \not\in \fixc{T_\xi}{\loc{z}}$:
	\begin{equation}\label{eqn:relintfix}
		\relint \fixc{T_\xi}{\loc{z}} = \big\{ s \in \fixc{T_\xi}{\loc{z}} \sep{\big} \euclproj{\set{Z}}(s) = \{\loc{z}\}\,,\: s \in \relint \rnormalcone{\set{Z}}(\loc{z}) + \{\loc{z}\} \big\}\,.
	\end{equation}
	\cite[Theorem.~1, p.~90]{lang1986} states that the boundary of a convex set has zero Lebesgue measure. 
	Convexity of $\fixc{T_\xi}{\loc{z}}$ together with \eqref{eqn:relintfix} therefore imply that \eqref{eqn:niceprojCond} holds for all $s \in \fixc{T_\xi}{\loc{z}}$, except a measure zero subset.
	
	Due to $\fixc{T_{\bar{\xi}}}{\loc{z}} \neq \varnothing$ and \refass{ass:nondeg}, we have that there exists an $\bar{s} \in \fixc{T_{\bar{\xi}}}{\loc{z}}$ with $\bar{s} \in \relint \rnormalcone{\set{Z}}(\loc{z}) + \{\loc{z}\}$. Furthermore, we have that $s(\xi) := \loc{z} - \tfrac{1}{\xi}\bar{\xi}(\loc{z}-\bar{s}) \in \fixc{T_\xi}{\loc{z}}$ and $s(\xi) \in \relint \rnormalcone{\set{Z}}(\loc{z}) + \{\loc{z}\}$ for $\xi \geq \bar{\xi}$ and by \refprop{prop:proj}\ref{fct:proxSingleton} we have that $\euclproj{\set{Z}}(s(\xi)) = \{\loc{z}\}$ for any $\xi > \bar{\xi}$. Therefore, for any $\xi > \bar{\xi}$ we have that 
	\begin{equation} \label{eqn:claim:nonempty}
		\fixc{T_\xi}{\loc{z}} \cap \relint \big(\rnormalcone{\set{Z}}(\loc{z}) + \{\loc{z}\}\big) \cap \big\{s\sep{\big}\euclproj{\set{Z}}(s) = \{\loc{z}\}\big\} \neq \varnothing\,.
	\end{equation}
	We consider the set $\{ s \sep{}  \loc{z} \in \euclproj{\set{Z}}(s) \}$. We have
	\begin{align*}
		\big\{ s \sep{\big}  \loc{z} \in \euclproj{\set{Z}}(s) \big\} &= \big\{ s \sep{\big} \|s-\loc{z}\| \leq \dist(s,\set{Z}) \big\} \\
		&= \big\{ s \sep{\big} \|s-\loc{z}\| \leq \dist(s,\set{Y}(\loc{z})) \big\} \cap \big\{ s \sep{\big} \|s-\loc{z}\| \leq \dist(s,\set{Y}^c(\loc{z})) \big\} \\
		&= \big(\rnormalcone{\set{Z}}(\loc{z}) + \{\loc{z}\}\big) \cap \big\{ s \sep{\big} \|s-\loc{z}\| \leq \dist(s,\set{Y}^c(\loc{z})) \big\}\,,
	\end{align*}
	with $\set{Y}(\loc{z}) := \bigcup_{i\in\activePoly{\set{Z}}(\loc{z})} \set{Z}^i$ and $\set{Y}^c(\loc{z}) := \bigcup_{i\in\activePoly{\set{Z}}^c(\loc{z})} \set{Z}^i$, where $\activePoly{\set{Z}}^c(\loc{z}) := \{1,\ldots,I\}\setminus\activePoly{\set{Z}}(\loc{z})$. Where we have used that by \refprop{prop:normalConeCharact} and convexity of $\set{Z}^i$ we have
	\begin{align*}
		\{ s \sep{} \|s-\loc{z}\| \leq \dist(s,\set{Y}(\loc{z})) \} &= \{ s \sep{} \loc{z} \in \euclproj{\set{Y}(\loc{z})}(s) \}\\
		&= \rnormalcone{\set{Y}(\loc{z})}(\loc{z}) + \{\loc{z}\} = \rnormalcone{\set{Z}}(\loc{z}) + \{\loc{z}\}\,.
	\end{align*}
	By continuity and upper-boundedness of $\varsigma(s) := \|s-\loc{z}\| - \dist(s,\set{Y}^c(\loc{z}))$, and by closedness of $\set{Y}^c(\loc{z})$, it follows that
	\begin{align*}
		\relint \big\{ s \sep{\big} \|s-\loc{z}\| \leq \dist(s,\set{Y}^c(\loc{z})) \big\} &= \big\{ s \sep{\big} \|s-\loc{z}\| < \dist(s,\set{Y}^c(\loc{z})) \big\}\\
		&= \big\{ s \sep{\big} \euclproj{\set{Z}}(s) = \{\loc{z}\} \big\}
	\end{align*}
	for all $s$ such that $\loc{z} \in \euclproj{\set{Z}}(s)$. Therefore, by distributivity of the $\relint$ operator over finite intersections due to \eqref{eqn:claim:nonempty}, see \cite[Proposition~2.42, p.~65]{rockafellar1998}, we have
	\begin{align*}
		\relint \big\{ s \sep{\big}  \loc{z} \in \euclproj{\set{Z}}(s) \big\} = \relint \big(\rnormalcone{\set{Z}}(\loc{z}) + \{\loc{z}\}\big) \cap \big\{ s \sep{\big} \euclproj{\set{Z}}(s) = \{\loc{z}\} \big\} \neq \varnothing\,.
	\end{align*}
	By definition of $\fixc{T_\xi}{\loc{z}}$, convexity of $\{ s \sep{}  \loc{z} \in \euclproj{\set{Z}}(s) \}$, \cite[Proposition~2.42, p.~65]{rockafellar1998} and \eqref{eqn:claim:nonempty} we then have
	\begin{align*}
		\relint \fixc{T_\xi}{\loc{z}} &= \relint \big\{ s \sep{\big} 0 \in \{H\loc{z}+h\} + \rnormalcone{\set{E}}(\loc{z}) - \{\xi(\loc{z}-s)\}\,,\: \loc{z} \in \euclproj{\set{Z}}(s) \big\}\\
		&= \fixc{T_\xi}{\loc{z}} \cap \relint \big\{ s \sep{\big}  \loc{z} \in \euclproj{\set{Z}}(s) \big\}\\
		&= \fixc{T_\xi}{\loc{z}} \cap \relint \big(\rnormalcone{\set{Z}}(\loc{z}) + \{\loc{z}\}) \cap \big\{ s \sep{\big} \euclproj{\set{Z}}(s) = \{\loc{z}\} \big\} \neq \varnothing\,.
	\end{align*}
\end{proof}

\begin{proof}[Proof of \reflem{lem:niceop}]
	We consider any $\xi>0$, $\epsilon > 0$ and $\loc{s} \in \fix T_\xi$ such that the projection $\euclproj{\set{Z}}$ is single-valued, continuous and firmly nonexpansive on $\ball_{\epsilon}(\loc{s})$. We recall the definition of $T_\xi$ in \eqref{eqn:T2}.
	Clearly $T_{\xi}$ is single-valued and continuous on $\ball_{\epsilon}(\loc{s})$ by virtue of the projection. It remains to show the nonexpansiveness of $T_{\xi}$ on $\ball_{\epsilon}(\loc{s})$. By \cite[Definition~4.1~(ii), p.~59]{bauschke2011}, $T_{\xi}$ is nonexpansive on $\ball_{\epsilon}(\loc{s})$ if $\|T_{\xi}(s)-T_{\xi}(t)\|^2 \leq \|s-t\|^2$ for all $s,t \in \ball_{\epsilon}(\loc{s})$.
	For any $s,t$, we have that
	\begin{align} \label{eqn:Tdiffnorm}
		\|T_{\xi}(s)-T_{\xi}(t)\|^2 &= \|(I-WM_{\xi})(s-t)\|^2\\
		&\phantom{=}\: + 2\langle W^\transp (I-WM_{\xi})(s-t), \euclproj{\set{Z}}(s)-\euclproj{\set{Z}}(t) \rangle\nonumber\\
		&\phantom{=}\: + \|W(\euclproj{\set{Z}}(s)-\euclproj{\set{Z}}(t))\|^2\,.\nonumber
	\end{align}
	To show $\eqref{eqn:Tdiffnorm} \leq \|s-t\|^2$ for all $s,t \in \ball_{\epsilon}(\loc{s})$ we will split \eqref{eqn:Tdiffnorm} into two parts, the portions belonging to the range- and nullspace of $M_{\xi}$. We recall that $M_{\xi}$ is positive semi-definite for $\xi$ satisfying \eqref{eqn:xiCond}. Therefore, we can consider the eigenvalue decomposition of $M_{\xi}$, as in \eqref{def:fixedpointOp} and define $q_i$, for $i=1,\ldots,n$, to be the normalized eigenvectors of $M_{\xi}$. We define $\delta,p \in \reals{n}$ as coefficients, such that $\sum_{i=1}^n \delta_i q_i = s-t$ and $\sum_{i=1}^n p_i q_i = \euclproj{\set{Z}}(s)-\euclproj{\set{Z}}(t)$.  We further define $\delta^\orth := \sum_{i=1}^{n-m} \delta_i q_i$, $\delta^0 := \sum_{i=n-m+1}^n \delta_i q_i$, $p^\orth := \sum_{i=1}^{n-m} p_i q_i$ and $p^0 := \sum_{i=n-m+1}^n p_i q_i$, which splits $s-t$ and $\euclproj{\set{Z}}(s)-\euclproj{\set{Z}}(t)$ into their range and nullspace portions. Additionally, $p^\orth_{\Lambda} := \sum_{i=1}^{n-m} \lambda_i^{-1}p_iq_i$, for $\lambda_i$ the non-zero eigenvalues of $M_\xi$.
	Substituting $\delta^\orth$, $\delta^0$, $p^\orth$, $p^0$ and $p^\orth_{\Lambda}$ into \eqref{eqn:Tdiffnorm}, using the eigenvalue decomposition of $M_\xi$ and $W$ as defined in \eqref{eqn:W}, we obtain
	\begin{align} \label{eqn:TdiffnormSplit}
		\|T_{\xi}(s)-T_{\xi}(t)\|^2 =\:&\tfrac{1}{4}\|\delta^\orth\|^2 + \|\delta^0\|^2 + \tfrac{1}{2}\langle \delta^\orth,p^\orth_{\Lambda}\rangle - 2\langle \delta^0,p^0\rangle + \tfrac{1}{4}\|p^\orth_{\Lambda}\|^2 + \|p^0\|^2\,.
	\end{align}
	We want to show that $\eqref{eqn:Tdiffnorm} = \eqref{eqn:TdiffnormSplit} \leq \|s-t\|^2 = \|\delta^\orth\|^2 + \|\delta^0\|^2$. However, this only has to hold for $\delta,p$ that are related via the projection $\euclproj{\set{Z}}$ where in particular we want to exploit its firm nonexpansiveness. The projection $\euclproj{\set{Z}}$ being firmly nonexpansive, and nonexpansive, on $\ball_{\epsilon}(\loc{s})$, means via \refdef{def:nonexp}, that for all $s,t \in \ball_{\epsilon}(\loc{s})$
	 \begin{align}
		&\|\euclproj{\set{Z}}(s)-\euclproj{\set{Z}}(t)\|^2 \leq \langle s-t, \euclproj{\set{Z}}(s)-\euclproj{\set{Z}}(t) \rangle\text{, and}\tag{FNE}\label{eqn:ProjFNE}\\
		&\|\euclproj{\set{Z}}(s)-\euclproj{\set{Z}}(t)\|^2 \leq \|s-t\|^2\,.\tag{NE}\label{eqn:ProjNE}
	\end{align}
	By applying the Cauchy-Schwarz inequality, and combining \eqref{eqn:ProjFNE} and \eqref{eqn:ProjNE} we get
	\begin{equation}
		0 \leq \|\euclproj{\set{Z}}(s)-\euclproj{\set{Z}}(t)\|^2 \leq \langle s-t, \euclproj{\set{Z}}(s)-\euclproj{\set{Z}}(t) \rangle \leq \|s-t\|^2\,.\label{eqn:fnecond}
	\end{equation}
	Substituting $\delta^\orth, \delta^0, p^\orth$ and $p^0$ into \eqref{eqn:fnecond}, we obtain
	\begin{equation} \label{eqn:fnecondsplit}
		0 \leq \|p^\orth\|^2 + \|p^0\|^2 \leq \langle p^\orth,\delta^\orth \rangle + \langle p^0,\delta^0 \rangle \leq \|\delta^\orth\|^2 + \|\delta^0\|^2\,.
	\end{equation}
	To show that $T_\xi$ is nonexpansive it is therefore sufficient to show that for all $\delta,p$ that satisfy \eqref{eqn:fnecondsplit} it holds that $\eqref{eqn:TdiffnormSplit} \leq \|\delta^\orth\|^2 + \|\delta^0\|^2$. Notice, that \eqref{eqn:TdiffnormSplit} and \eqref{eqn:fnecondsplit} do not contain any coupling terms between range- and nullspace portions of $\delta$ or $p$, therefore we can argue about them separately.
	\begin{enumerate}[label=\roman*),wide]
		\item\label{niceopProofCase1} We consider all $\delta^\orth, p^\orth$ such that $0 \leq \|p^\orth\|^2 \leq \langle p^\orth,\delta^\orth \rangle \leq \|\delta^\orth\|^2$. Using \eqref{eqn:Meig} implies that $\|\Lambda^{-1}\|_2 \leq 1$ thus we obtain $\|p^\orth_\Lambda\| \leq \|p^\orth\| \leq \|\delta^\orth\|$ and $\langle \delta^\orth,p^\orth_\Lambda\rangle \leq \|\delta^\orth\|^2$. Therefore, $\tfrac{1}{4}\|\delta^\orth\|^2 + \tfrac{1}{2}\langle \delta^\orth,p^\orth_\Lambda\rangle + \tfrac{1}{4}\|p^\orth_\Lambda\|^2 \leq \tfrac{1}{4}\|\delta^\orth\|^2 + \tfrac{1}{2}\|\delta^\orth\|^2 + \tfrac{1}{4}\|\delta^\orth\|^2=\|\delta^\orth\|^2$, i.e., the result holds for the range space portion.
		\item\label{niceopProofCase2} We consider all $\delta^0, p^0$ such that $0 \leq \|p^0\|^2 \leq \langle p^0,\delta^0 \rangle \leq \|\delta^0\|^2$. It follows that $\|\delta^0\|^2 - 2\langle \delta^0,p^0\rangle + \|p^0\|^2 \leq	 \|\delta^0\|^2 - 2\langle \delta^0,p^0\rangle + \langle \delta^0,p^0\rangle \leq \|\delta^0\|^2$. Thus, the result also holds for the null space portion.
	\end{enumerate}
	Cases \ref{niceopProofCase1} and \ref{niceopProofCase2} together imply the result.
\end{proof}

\section{Condition for satisfying \refass{ass:Z}} \label{app:assCheck}
We give an algorithm to check whether Assumption~\refass{ass:Z} is satisfied at every point in $\set{Z}$, representing a necessary and sufficient condition. This conditions relies on the enumeration of all possible combinations of active components and their active sets, and can be checked \emph{once and offline} for all parameters $\theta$.
We consider $\set{E} := \{ z \in \reals{n} \sep{} Az = b\}$, with $A \in \reals{m \times n}$, and $\set{Z}(\theta)$ with components $\set{Z}^i(\theta)$ defined as
\begin{equation} \label{eqn:assCheckZ}
	\set{Z}(\theta) = \bigcup_{i=1}^{I} \set{Z}^i(\theta) = \bigcup_{i=1}^{I} \{z \in \reals{n} \sep{} G^iz=g^i(\theta)\,, F^iz \leq f^i(\theta)\} \,,
\end{equation}
where $G^i \in \reals{p_i\times n}$ and $F^i \in \reals{m_i\times n}$. The functions $f^i(\theta) = F^{i,\theta}\theta+f^i$ and $g^i(\theta) = G^{i,\theta}\theta+g^i$ depend affinely on a parameter $\theta \in \reals{p}$. With $\set{Z}^i(\theta)$ closed convex polyhedra for fixed $\theta \in \reals{p}$ and non-empty for some $\theta$.

Given a parameter $\theta$ and index $i \in \{1,\ldots,I\}$. The \emph{active set} $\activeSet{\set{Z}^i(\theta)}(z) := \big\{ j \in \{1,\ldots,m_i\} \sep{\big} \suchthat F^i_{j}z = f^i_j(\theta) \big\}$ of $\set{Z}^i(\theta)$ at $z \in \set{Z}^i(\theta)$ is defined in the usual way \cite[Definition~12.1, p.~308]{numericalOptimization2006}, where $F^i_j$ denotes the $j$-th row of $F^i$. For $z \not\in \set{Z}^i(\theta)$, we define $\activeSet{\set{Z}^i(\theta)}(z) := \varnothing$. For any active set $\set{J} \subseteq \{1,\ldots,m_i\}$, $F^i_{\activeSet{}}$ denotes the subset of rows of $F^i$ that belong to the active set $\activeSet{}$, $f^i_{\activeSet{}}$ is defined in the same way. We further define the active set $\activeSet{\set{Z}(\theta)}(z)$ of $\set{Z}(\theta)$ at $z \in \set{Z}(\theta)$, as the collection of all the active sets of the components of $\set{Z}(\theta)$, i.e., $\activeSet{\set{Z}(\theta)}(z) := \{ \activeSet{\set{Z}^i(\theta)}(z) \}_{i=1}^I$,

The set $\activePoly{\set{Z}} \subseteq 2^{\{1,\ldots,I\}}$ denotes the set of sets of active components of $\set{Z}(\theta)$, i.e., $\mathcal{I} \in \activePoly{\set{Z}}$ if and only if there exists $\theta  \in \reals{p}$ and $z \in \set{Z}(\theta)$ such that $\mathcal{I} \equiv \activePoly{\set{Z}(\theta)}(z)$, where $\activePoly{\set{Z}(\theta)}(z)$ denotes the set of active components at $z$, as defined in \refsec{sec:locopt}. Similarly we denote by $\activeSet{\set{Z}}  := \left\{ \activeSet{\set{Z}^i} \right\}_{i=1}^I$ the collection of the sets of all possible active sets of $\set{Z}(\theta)$, where $\activeSet{\set{Z}^i} := \big\{ \activeSet{} \subseteq \{1,\ldots,m_i\} \sep{\big} \exists \theta \in \reals{p}\,, \exists z\in \set{Z}(\theta) \suchthat \activeSet{} \equiv \activeSet{\set{Z}^i(\theta)}(z) \big\}$ denotes the set of all possible active sets for component $i$.

Assumption~\refass{ass:Z} is purely geometric and needs to be satisfied at every point in $\set{Z}$. \refalg{alg:assZCheck} will effectively enumerate all possible combinations of active components and their active sets and check for each such combination. This amounts to a finite number of checks. Checking a parametric set $\set{Z}(\theta)$, due to the polyhedral structure, requires checking finitely many different non-parametric sets.

\begin{algorithm}	
	\caption{Check of Assumption~\refass{ass:Z}} \label{alg:assZCheck}
  	\begin{algorithmic}[1]
   		\Require{$\set{Z}(\theta)$ as in \eqref{eqn:assCheckZ}}
   		\For{\textbf{each} $\mathcal{I} \in \activePoly{\set{Z}}$, $\{ \activeSet{i} \}_{i=1}^I \in \activeSet{\set{Z}}$}
   			\State $\set{N} \gets \bigcap_{i \in \mathcal{I}} \{v \sep{} v = G^{i\transp} \nu + F^{i\transp}_{\activeSet{i}} \mu \,,\mu \geq 0\}$ \Comment{construct regular normal cone}\label{step:assZCheckN}
   			\State \algorithmicif\ $\set{N} = \{0\}$ \algorithmicthen\ \textbf{continue}
   			\State $\set{R} \gets \bigcup_{i \in \mathcal{I}} \{v \sep{} G^iv=0\,, F^i_{\activeSet{i}}v \leq 0\}$ \Comment{construct union of recession cones}\label{step:assZCheckR}
   			\If{$\set{N}^\orth \subseteq \set{R}$} \textbf{continue}\label{alg:assZCheckIncl}
   			\Else
   				\State $w \gets w \in \mathcal{N}^\orth \setminus \set{R}$
   				\State $(z,\theta) \gets z,\theta \suchthat z \in \bigcap_{i=1}^I \set{A}_i(\theta)$
   				\State \Return $(w,z,\theta)$ \Comment{return counter-example} \EndIf
   		\EndFor
  	\end{algorithmic}
\end{algorithm}
\refalg{alg:assZCheck} is a combinatorial algorithm and may perform badly for anything but small dimensions. However when the set $\set{Z}$ is a Cartesian product of sets in low dimension, the check can be performed separately for each part of the Cartesian product. This significantly reduces the ``curse of dimensionality''.
\begin{lemma} \label{lem:assZCheck}
	Given a (parametric) set $\set{Z}(\theta)$ as in \eqref{eqn:assCheckZ}, then \refalg{alg:assZCheck} terminates in a finite number of steps. Either $\set{Z}(\theta)$ satisfies Assumption~\refass{ass:Z} for all $\theta$, or the algorithm returns a valid counter-example.
\end{lemma}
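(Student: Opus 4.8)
The plan is to verify two things about \refalg{alg:assZCheck}: (i) it performs only finitely many operations, and (ii) the test it runs is, configuration by configuration, equivalent to \refass{ass:Z} holding at every point of $\set{Z}(\theta)$ for every $\theta$. Finiteness is immediate: the loop ranges over $\activePoly{\set{Z}}\subseteq 2^{\{1,\dots,I\}}$ and, for each component $i$, over $\activeSet{\set{Z}^i}\subseteq 2^{\{1,\dots,m_i\}}$, all finite, so only finitely many pairs $(\mathcal{I},\{\activeSet{i}\}_{i=1}^I)$ are enumerated; each iteration manipulates polyhedral cones given by finitely many generators or inequalities — step~\ref{step:assZCheckN} forms a finite intersection of finitely generated cones, step~\ref{step:assZCheckR} a finite union of polyhedral cones, and the test $\set{N}^\orth\subseteq\set{R}$ reduces to checking containment of a linear subspace (with an explicit basis read off from $\set{N}$) in a finite union of polyhedral cones, which is decidable by finitely many linear feasibility checks.

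For correctness, fix $\theta$ and $z\in\set{Z}(\theta)$, let $\mathcal{I}:=\activePoly{\set{Z}(\theta)}(z)$ and $\activeSet{i}:=\activeSet{\set{Z}^i(\theta)}(z)$ for $i\in\mathcal{I}$; this is one of the enumerated pairs. First I would show that the cone $\set{N}$ of step~\ref{step:assZCheckN} equals $\rnormalcone{\set{Z}(\theta)}(z)$: by the standard formula for the normal cone to a convex polyhedron, $\rnormalcone{\set{Z}^i(\theta)}(z)$ is the row space of $G^i$ plus the conical hull of the active inequality gradients, i.e. $\{G^{i\transp}\nu+F^{i\transp}_{\activeSet{i}}\mu:\nu\text{ free},\ \mu\ge 0\}$, and by \refprop{prop:normalConeCharact} the regular normal cone of the union is the intersection over $i\in\mathcal{I}$ of these; in particular $\set{N}=\{0\}$ precisely in the first alternative of \refass{ass:Z}. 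Next I would show that $\set{R}$ of step~\ref{step:assZCheckR} is a local model of $\set{Z}(\theta)$ at $z$: for each $i\in\mathcal{I}$, inactivity of the remaining constraints on a small ball gives $\epsilon_i>0$ with $\set{Z}^i(\theta)\cap\ball_{\epsilon_i}(z)=\bigl(z+\{v:G^iv=0,\ F^i_{\activeSet{i}}v\le 0\}\bigr)\cap\ball_{\epsilon_i}(z)$, while for $i\notin\mathcal{I}$ closedness gives $\epsilon_i>0$ with $\set{Z}^i(\theta)\cap\ball_{\epsilon_i}(z)=\varnothing$; with $\epsilon_0:=\min_i\epsilon_i$ this yields $\set{Z}(\theta)\cap\ball_{\epsilon_0}(z)=(z+\set{R})\cap\ball_{\epsilon_0}(z)$.

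With these two identifications, \refass{ass:Z} at $z$ says: either $\set{N}=\{0\}$, or there is $\epsilon>0$ with $z+(\set{N}^\orth\cap\ball_\epsilon)\subseteq\set{Z}(\theta)$. Shrinking $\epsilon\le\epsilon_0$ and using the local model, the second alternative is equivalent to $\set{N}^\orth\cap\ball_\epsilon\subseteq\set{R}$ for some $\epsilon>0$; and since $\set{N}^\orth$ is a linear subspace and $\set{R}$ a union of cones (hence invariant under nonnegative scaling), this holds for some $\epsilon>0$ if and only if $\set{N}^\orth\subseteq\set{R}$ (given $w\in\set{N}^\orth\setminus\{0\}$, a small positive multiple lies in $\set{N}^\orth\cap\ball_\epsilon\subseteq\set{R}$, so $w\in\set{R}$ by conic invariance; the converse is trivial). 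Thus the algorithm's test on the pair realized by $z$ passes iff \refass{ass:Z} holds at $z$. Consequently, if the loop completes without returning, \refass{ass:Z} holds at every $z\in\set{Z}(\theta)$ for every $\theta$; and if the algorithm returns $(w,\{\activeSet{j}\})$ at a pair with $\set{N}\neq\{0\}$ and $w\in\set{N}^\orth\setminus\set{R}$, then for any $z$ realizing that pair, conic invariance gives $tw\notin\set{R}$ for all $t>0$, so $z+tw\notin\set{Z}(\theta)$ for all small $t>0$ while $w\in\set{N}^\orth=\rnormalcone{\set{Z}(\theta)}(z)^\orth$ — i.e. \refass{ass:Z} fails at $z$ and the returned pair is a genuine witness.

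The main obstacle I expect is the bookkeeping around which active‑set configurations are actually attained: the two identifications above are pinned to the configuration $(\mathcal{I},\{\activeSet{i}\})$ genuinely arising at some $(z,\theta)$, so one must argue that the enumeration in the loop is exactly over realizable configurations — equivalently, that testing a non‑realizable configuration can never yield a spurious counter‑example. This is where the parametric polyhedral structure of \eqref{eqn:assCheckZ} enters: realizability of a prescribed collection of active components and active sets is itself the feasibility of a finite linear system in $(z,\theta)$, so it can be (and implicitly is) pruned within the same finite enumeration. The remaining ingredients — the polyhedral normal‑cone formula and the conic‑scaling equivalence — are routine.
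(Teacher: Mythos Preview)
Your proposal is correct and follows essentially the same route as the paper's proof: both identify $\set{N}$ with $\rnormalcone{\set{Z}(\theta)}(z)$ via \refprop{prop:normalConeCharact} and the polyhedral normal-cone formula, identify $\set{R}$ with the local conic model of $\set{Z}(\theta)$ at $z$ (the paper phrases this as the union of recession cones of the sets $\set{F}_i(\theta)$), and then use conic scaling to reduce the $\epsilon$-condition in \refass{ass:Z} to the inclusion $\set{N}^\orth\subseteq\set{R}$. Your treatment is in fact slightly more explicit than the paper's on the realizability issue: the paper simply loops over $\activePoly{\set{Z}}$ and $\activeSet{\set{Z}}$ (which by definition contain only attained configurations) without commenting on joint realizability of a pair $(\mathcal{I},\{\activeSet{i}\})$, whereas you correctly observe that this is itself a linear feasibility question in $(z,\theta)$ and can be pruned within the same finite enumeration.
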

\begin{proof}[Proof of \reflem{lem:assZCheck}]
	We will show that \refalg{alg:assZCheck} effectively checks Assumption \refass{ass:Z} for every $\theta$ and every point in $\set{Z}(\theta)$. We will argue, that checking \refass{ass:Z} for every $\theta \in \reals{p}$ and every $z\in\set{Z}(\theta)$ is equivalent to checking it for every set of active constraints. Note that $\activeSet{\set{Z}}$ the set of active sets of $\set{Z}(\theta)$ is a finite set with at most $2^{\sum_{i=1}^I m_i}$ elements. 
	%We further note that $\activePoly{\set{Z}}$ and $\activeSet{\set{Z}}$ can be constructed by solving a finite number of linear optimization problems.
	
	In \refalg{alg:assZCheck}, we iterate over the elements of $\activePoly{\set{Z}}$ and $\activeSet{\set{Z}}$, which means that the loop is executed at most a finite number of times. Given a pair $\set{I}, \{ \activeSet{i} \}_{i=1}^I$ we want to verify, or invalidate, \refass{ass:Z} by considering two sets $\set{N}$ and $\set{R}$. The sets $\set{N}$, $\set{R}$ are defined solely via the active constraints and do not depend on $\theta$. For each active component $i\in \set{I}$ we define two auxiliary sets
	\begin{align*}
		\set{A}_i(\theta) &:= \{z\sep{} G^iz=g^i(\theta)\,, F_{\activeSet{i}}^iz = f^i_{\activeSet{i}}(\theta)\,,F_{\activeSet{i}^c}^iz < f^i_{\activeSet{i}^c}(\theta) \}\,,\\
		\set{F}_i(\theta) &:= \{z \sep{} G^iz=g^i(\theta)\,, F_{\activeSet{i}}^i z \leq f_{\activeSet{i}}^i(\theta)\}\,,
	\end{align*}
	where the set $\activeSet{i}^c$ is simply the complement of $\activeSet{i}$, i.e., the set of constraints that are not active. For a given $\theta$, $\set{A}_i(\theta)$ is the set of points $z \in \set{Z}$ that belong to the active set $\activeSet{i}$, whereas $\set{F}_i(\theta) \supseteq \set{A}_i(\theta)$ is a larger set, that is defined by only the active constraints and in general is not included in $\set{Z}$. We further define $\set{A}(\theta) :=\bigcap_{i=1}^I \set{A}_i(\theta)$, the set of points that belong to all of the active sets of the active components. By checking Assumption \refass{ass:Z} for each $\set{A}(\theta)$ we will effectively check it for each $z \in \set{Z}$.
	
	The conditions to be checked involve the regular normal cone $\rnormalcone{\set{Z}(\theta)}(z)$. For each $\theta$, $z \in \set{A}(\theta)$ the regular normal cone $\rnormalcone{\set{Z}(\theta)}(z)$ is given as $\rnormalcone{\set{Z}(\theta)}(z) = \bigcap_{i \in \mathcal{I}} \{G^{i\transp} \nu + F^{i\transp}_{\activeSet{i}} \mu \sep{} \mu \geq 0\}$, using \refprop{prop:normalConeCharact} and the fact that normal cones of polyhedral sets can be represented as linear and conic combinations of their halfspaces, see \cite[Theorem~6.46, p.~231]{rockafellar1998}.
	It is independent of $\theta$ and $z$, and only depends the active constraints and components. This set, $\mathcal{N} := \rnormalcone{\set{Z}(\theta)}(z)$, is computed in step \ref{step:assZCheckN} of \refalg{alg:assZCheck}. A half-space representation of $\set{N}$ can be obtained using Fourier-Motzkin elimination \cite[p.~84]{dantzig1963} for each $i \in \set{I}$, after which the intersections are trivial. We now need to check two conditions. If $\mathcal{N} = \{0\}$, then the first part of \refass{ass:Z} is satisfied for all $z \in \set{A}(\theta)$, and we continue with the next active set. Otherwise, we need to check whether there exists an $\epsilon > 0$ such that for all $w \in \mathcal{N}^\orth\cap \ball_\epsilon$ we have that $z+w \in \set{Z}(\theta)$. In order to check this condition, in step~\ref{step:assZCheckR}, we construct the set $\set{R}$ as the union of the recession cones $\set{R}_i := \rec(\set{F}_i(\theta)) = \{v \sep{} G^iv=0\,, F^i_{\activeSet{i}}v \leq 0\}$ of $\set{F}_i(\theta)$. Finally it remains to show that checking this condition for all $z \in \set{A}(\theta)$, in step \ref{alg:assZCheckIncl}, is equivalent to checking whether the orthogonal complement $\mathcal{N}^\orth := \{v \in \reals{n} \sep{} \langle v, u \rangle = 0\,, \forall u \in \mathcal{N}\}$ is contained in $\set{R}$. If $\mathcal{N}^\orth \subseteq \set{R}$, consider any $z \in \set{A}(\theta)$, an $\epsilon>0$ small enough and any $w \in \mathcal{N}^\orth  \cap \ball_\epsilon$. Clearly there exists an $i \in \mathcal{I}$ such that $w \in \set{R}_i$. Furthermore, by construction of $ \set{A}(\theta)$ and $\set{R}_i$, we have $z+w \in \set{Z}^i \subseteq \set{Z}$. This means \refass{ass:Z} is satisfied for all $z \in \set{A}(\theta)$ and we continue with the next active set.
If $\mathcal{N}^\orth \not\subseteq \set{R}$, then there exists a $w \in \mathcal{N}^\orth \setminus \set{R}$. Because $\mathcal{N}$ and $\set{R}_i$ are cones, this is true for all positive scalings of $w$. In particular, for any $\epsilon>0$ there exists a $t>0$ such that $t w \in \mathcal{N}^\orth \cap \ball_\epsilon$ and $t w \not\in \set{R}$. Analogously to above, it follows that for any $z \in \set{A}(\theta)$, $z+t w \not\in \set{Z}^i$ for all $i\in\set{I}$, and therefore $z+t w \not\in \set{Z}$, which violates \refass{ass:Z}. In this case, the algorithm terminates and returns a violating direction $w$, point $z$ and parameter $\theta$. Because \refalg{alg:assZCheck} enumerates all active sets, it either terminates with a counterexample, or it returns after finitely many steps, having checked all possibilities.
\end{proof}

\section{MacMPEC benchmark problems} \label{app:macmpec}
The \texttt{MacMPEC} library \cite{leyffer2009} is a collection of almost 200 benchmark problems for mathematical programs with equilibrium constraints (MPECs). It contains 41 MPCCs with linear constraints (including linear complementarity constraints) that can be formulated in the form of \refprob{prob:ConsensusProblem} with a positive definite quadratic objective.
Formulating such MPCCs in the form of \refprob{prob:ConsensusProblem} can lead to a number of regions that is worst-case exponential in the number of complementarity constraints. Therefore, for 12 of the 41 problems we were unable to obtain a tractable representation in the form of \refprob{prob:ConsensusProblem}. The remaining 29 problems were brought into the form of \refprob{prob:ConsensusProblem}. However, in only two of the cases, \texttt{qpec1} and \texttt{qpec2}, we managed to extract the Cartesian product structure, i.e. $N > 1$, and in all cases, we used $\set{E} = \reals{n}$.
We solved each problem using the proposed method with $\xi = 2\lambda_{\rm max}(H)$, $\epsilon_{\rm tol} = 10^{-6}$ and initial iterate $s_0 = 0$. For all of these problems, \refalg{alg:krasnoselskij} converged and returned solutions close to the global minimum.

In \reftab{tab:macmpec}, for each of the 41 problems, we report the problem dimensions: the number of decision variables $n$, the number of regions $m$ and the ``horizon'' $N$. We also report the objective value of the found local minima compared to the objective value of the global optimum, and how many iterations were needed for each problem to converge to a solution with $10^{-6}$ consensus tolerance. Each of the 29 problems that could be tractably represented converged close to the global minimum within a few iterations, which the exception of \texttt{scale3}, which needed \num{6815} iterations to converge. For each problem, we have additionally, checked Assumption~\refass{ass:Z} using \refalg{alg:assZCheck}. For the problems \texttt{portfl-i-\{1,2,3,4,6\}}, checking Assumption~\refass{ass:Z} was intractable due to the large number of regions (more than \num{4000}) and decision variables (74). For 23 of the remaining 24 problems Assumption~\refass{ass:Z} was determined to be satisfied, with \refalg{alg:assZCheck} terminating within a few seconds. For one problem, \texttt{hs044-i}, \refalg{alg:assZCheck} terminated after $\unit[40.1]{minutes}$ with a counter-example, certifying that Assumption~\refass{ass:Z} does not hold. Nevertheless, \refalg{alg:krasnoselskij} converged for each of the 29 problems, including for the problems \texttt{hs044-i} and \texttt{portfl-i-\{1,2,3,4,6\}}, where Assumption~\refass{ass:Z} was violated or could not be checked. Moreover, they converged to local minima, as indicated by \refthm{thm:localOptimality}. 

%\begin{sidewaystable}
	{\centering
	%\small\addtolength{\tabcolsep}{-5pt}
{%
\begin{landscape}
\begin{longtable}{lrrrrlrrl}
	\toprule
	&  \multicolumn{3}{c}{problem dimensions} & \multicolumn{2}{c}{objective value} & & & \\
	\cmidrule(r){2-4} \cmidrule(r){5-6}
	& $n$ & $m$ & $N$ & our method & optimal & $\xi$ & \#iterations & Assumption~(A3) \\
	\cmidrule(r){2-2} \cmidrule(r){3-3} \cmidrule(r){4-4} \cmidrule(r){5-5} \cmidrule(r){6-6} \cmidrule(r){7-7} \cmidrule(r){8-8} \cmidrule(r){9-9} 
	\endfirsthead
	& \multicolumn{3}{c}{problem dimensions} & \multicolumn{2}{c}{objective value} & & & \\
	\cmidrule(r){2-4} \cmidrule(r){5-6}
	& $n$ & $m$ & $N$ & our method & optimal & $\xi$ & \#iterations & Assumption~(A3) \\
	\cmidrule(r){2-2} \cmidrule(r){3-3} \cmidrule(r){4-4} \cmidrule(r){5-5} \cmidrule(r){6-6} \cmidrule(r){7-7} \cmidrule(r){8-8} \cmidrule(r){9-9} 
	\endhead
	\bottomrule
	\multicolumn{9}{r@{}}{continued \ldots}\\
	\endfoot
	\bottomrule
	\caption{Results for the \texttt{MacMPEC} collection of benchmark problems \cite{leyffer2009}.\\\small
$^{\mathsection}$ the problems \texttt{bard1}, \texttt{ex.9.2.1} and \texttt{ex.9.2.7} are equivalent when formulated in the form of Problem~(1).\\
$^{\dagger}$ the problems \texttt{liswet1-\{050,100,200\}}, \texttt{qpec-100-\{1,2,3,4\}}, \texttt{qpec-200-\{1,2,3,4\}} and \texttt{ralphmod} cover a total 12 problem instances and were excluded because we were unable to find a tractable representation in the form of Problem~(1) due to the large number of complementarity constraints and slack variables.\\
$^{\mathparagraph}$ for \texttt{hs044-i}, Algorithm~(3) returned a counter-example to Assumption~(A3) with $w = (1,2,-1,1)$ at $z = (1.346, 1.336, 3.0707, 2.0587)$, still the proposed method converged to a near optimal solution.\\
$^{\ddagger}$ for the problems \texttt{portfl-i-\{1,2,3,4,6\}}, checking Assumption~(A3) with Algorithm~(3) was intractable due to the large number of regions and decision variables. Nevertheless, the proposed method converged to near optimal solutions.\label{tab:macmpec}
}
	\endlastfoot
	\texttt{bard1}$^{\mathsection}$ & $2$ & $3$ & $1$ & $17$ & $17$ & $16$ & $52$ & satisfied $(<\unit[1]{s})$\\
	\texttt{bard1m} & $2$ & $3$ & $1$ & $17$ & $17$ & $16$ & $52$ & satisfied $(\unit[1.56]{s})$\\
	\texttt{ex.9.2.1}$^{\mathsection}$ & $2$ & $3$ & $1$ & $17$ & $17$ & $16$ & $52$ & satisfied $(<\unit[1]{s})$\\
	\texttt{ex.9.2.2} & $2$ & $1$ & $1$ & $100$ & $100$ & $4$ & $60$ & satisfied $(<\unit[1]{s})$\\
	\texttt{ex.9.2.4} & $2$ & $2$ & $1$ & $0.5$ & $0.5$ & $2$ & $111$ & satisfied $(<\unit[1]{s})$\\
	\texttt{ex.9.2.5} & $2$ & $3$ & $1$ & $5$ & $5$ & $4$ & $113$ & satisfied $(<\unit[1]{s})$\\
	\texttt{ex.9.2.6} & $4$ & $9$ & $1$ & $-1$ & $-1$ & $4$ & $54$ & satisfied $(\unit[46.2]{s})$\\
	\texttt{ex.9.2.7}$^{\mathsection}$ & $2$ & $3$ & $1$ & $17$ & $17$ & $16$ & $52$ & satisfied $(<\unit[1]{s})$\\
	\texttt{gauvin} & $2$ & $2$ & $1$ & $20$ & $20$ & $4$ & $116$ & satisfied $(<\unit[1]{s})$\\
	\texttt{hs044-i} & $4$ & $9$ & $1$ & $15.6178$ & $15.6178$ & $4$ & $111$ & violated$^{\mathparagraph}$ $(\unit[40.1]{min})$\\
	\texttt{jr1} & $2$ & $2$ & $1$ & $0.5$ & $0.5$ & $4$ & $102$ & satisfied $(<\unit[1]{s})$\\
	\texttt{jr2} & $2$ & $2$ & $1$ & $0.5$ & $0.5$ & $4$ & $102$ & satisfied $(<\unit[1]{s})$\\
	\texttt{kth2} & $2$ & $1$ & $1$ & $8.6631e-13$ & $0$ & $4$ & $105$ & satisfied $(<\unit[1]{s})$\\
	\texttt{kth3} & $2$ & $2$ & $1$ & $0.5$ & $0.5$ & $2$ & $105$ & satisfied $(<\unit[1]{s})$\\
	\texttt{liswet1-\{050,100,200\}}$^{\dagger}$ & -- & -- & -- & -- & -- & -- & -- & --\\
	\texttt{portfl-i-1} & $74$ & $4095$ & $1$ & $1.5026e-05$ & $1.502e-05$ & $4$ & $98$ & --$^{\ddagger}$\\
	\texttt{portfl-i-2} & $74$ & $4087$ & $1$ & $1.4583e-05$ & $1.457e-05$ & $4$ & $96$ & --$^{\ddagger}$\\
	\texttt{portfl-i-3} & $74$ & $4093$ & $1$ & $6.2659e-06$ & $6.265e-06$ & $4$ & $97$ & --$^{\ddagger}$\\
	\texttt{portfl-i-4} & $74$ & $4095$ & $1$ & $2.1783e-06$ & $2.177e-06$ & $4$ & $96$ & --$^{\ddagger}$\\
	\texttt{portfl-i-6} & $74$ & $4095$ & $1$ & $2.3625e-06$ & $2.361e-06$ & $4$ & $95$ & --$^{\ddagger}$\\
	\texttt{qpec-100-\{1,2,3,4\}}$^{\dagger}$ & -- & -- & -- & -- & -- & -- & -- & --\\
	\texttt{qpec-200-\{1,2,3,4\}}$^{\dagger}$ & -- & -- & -- & -- & -- & -- & -- & --\\
	\texttt{qpec1} & $30$ & $2$ & $10$ & $80$ & $80$ & $4$ & $114$ & satisfied $(<\unit[1]{s})$\\
	\texttt{qpec2} & $30$ & $2$ & $10$ & $45$ & $45$ & $4$ & $119$ & satisfied $(<\unit[1]{s})$\\
	\texttt{ralphmod}$^{\dagger}$ & -- & -- & -- & -- & -- & -- & -- & --\\
	\texttt{scholtes3} & $2$ & $2$ & $1$ & $0.5$ & $0.5$ & $2$ & $105$ & satisfied $(<\unit[1]{s})$\\
	\texttt{scholtes5} & $3$ & $3$ & $1$ & $1$ & $1$ & $4$ & $111$ & satisfied $(\unit[1.37]{s})$\\
	\texttt{scale1} & $2$ & $2$ & $1$ & $1$ & $1$ & $40000$ & $70$ & satisfied $(<\unit[1]{s})$\\
	\texttt{scale2} & $2$ & $2$ & $1$ & $1$ & $1$ & $400$ & $105$ & satisfied $(<\unit[1]{s})$\\
	\texttt{scale3} & $2$ & $2$ & $1$ & $1$ & $1$ & $40000$ & $6815$ & satisfied $(<\unit[1]{s})$\\
	\texttt{scale4} & $2$ & $2$ & $1$ & $1$ & $1$ & $40000$ & $70$ & satisfied $(<\unit[1]{s})$\\
	\texttt{scale5} & $2$ & $2$ & $1$ & $100$ & $100$ & $400$ & $105$ & satisfied $(<\unit[1]{s})$\\
	\texttt{sl1} & $2$ & $6$ & $1$ & $0.0001$ & $0.0001$ & $4$ & $54$ & satisfied $(\unit[20.8]{s})$\\
\end{longtable}
\end{landscape}
}

}
%\end{sidewaystable}
\clearpage

\bibliographystyle{siamplain}
\bibliography{references}
\end{document}